\newcommand{\field}[1]{\mathbb{#1}}
\newcommand {\R}{\mathbb{R}}
\newcommand {\N}{\mathbb{N}}
\newcommand{\GG}{\mathcal{G}}
\newcommand {\p}{\mathfrak{p}}
\newcommand {\Z}{\mathbb{Z}}
\newcommand{\NN}{\field{N}}
\newcommand{\bgln}{\begin{eqnarray}} 
\newcommand{\egln}{\end{eqnarray}}
\newcommand{\bgl}{\begin{equation}} 
\newcommand{\egl}{\end{equation}}
\newcommand{\ro}{\rho}
\newtheorem{teorema}{theorem}[section]
\newtheorem{lemma}[teorema]{Lemma}
\newtheorem{corollary}[teorema]{Corollary}
\newtheorem{definition}[teorema]{Definition}
\newtheorem{proposition}[teorema]{Proposition}
\newtheorem{hypothesis}[teorema]{Hypothesis}
\newtheorem{example}[teorema]{Example}
\theoremstyle{remark}
\theoremstyle{definition}
\newtheorem{remark}[teorema]{Remark}
\newtheorem{theorem}[teorema]{Theorem}
\begin{document}
\onehalfspace

\author{Daniel Gon\c{c}alves\footnote{Partially supported by Conselho Nacional de Desenvolvimento Cient\'ifico e Tecnol\'ogico (CNPq) and Capes-PrInt - Brazil.}, Danilo Royer, and Felipe Augusto Tasca}

\title{Entropy of local homeomorphisms with applications to infinite alphabet shift spaces}
\maketitle

\begin{abstract} 

In this paper, we introduce topological entropy for dynamical systems generated by a single local homeomorphism (Deaconu-Renault systems). More precisely, we generalize Adler, Konheim, and McAndrew's definition of entropy via covers and Bowen's definition of entropy via separated sets. We propose a definition of factor map between Deaconu-Renault systems and show that entropy (via separated sets) always decreases under uniformly continuous factor maps. Since the variational principle does not hold in the full generality of our setting, we show that the proposed entropy via covers is a lower bound to the proposed entropy via separated sets. Finally, we compute entropy for infinite graphs (and ultragraphs) and compare it with the entropy of infinite graphs defined by Gurevich.

\vspace{5mm}

\thanks{\noindent 2020 \textit{Mathematics Subject Classification.} Primary: 37B40. Secondary: 05C12, 37A35, 37A55, 54C70.}
\newline
\textit{Keywords}: Entropy, Deaconu-Renault systems, factor maps, graph, ultragraph shift space.

\vspace{5mm}

\end{abstract}

\section{Introduction}

Entropy is a classical concept in Mathematics,  with an ample spectrum of applications and many ramifications. For instance, in the context of C*-algebraic dynamics entropy has been studied by Kerr and Voicolescu, see \cite{Kerr, Kerr1, Voicolescu}, and more specifically, for graph and Exel-Laca C*-algebras, noncommutative entropy has been studied in \cite{JP06, JP09, MNTY}. 
 In our work, we are interested in the topological entropy of singly generated dynamical systems (also known as Deaconu-Renault systems). For these systems, we generalize Adler, Konheim, and McAndrew's definition of entropy via covers and Bowen's definition of entropy via separated sets, and start the study of the theory of these generalized entropies. 

In more precise terms, a Deaconu-Renault system \cite{CRST, D95, Re00} consists of a locally compact Hausdorff space and a local homeomorphism between open subsets of that space. Among several examples where such systems are found, we mention self-covering maps \cite{D95}, one-sided shifts of finite type \cite{Kitchens, LindMarcus}, the boundary-path space of a directed graph together with the shift map \cite{BCW, Webster}, the one-sided edge shift space of an ultragraph together with the restriction of the shift map to points with non-zero length \cite{GRultrapartial, TascaDaniel, GDW}, and the systems associated with algebras of one-sided subshifts over arbitrary alphabets \cite{BCGW}. 

Deaconu-Renault systems routinely appear as the key ingredient in the description of C*-algebras as groupoid C*-algebras (and of algebras as Steinberg algebras) and their dynamical properties are often linked with properties of the associated algebras. For example, conjugacy of Deaconu-Renault systems is described in terms of isomorphisms of the associated C*-algebras in \cite{ABCE}. In our work, we describe, among other things, when our proposed metric entropy is an invariant for conjugacy of Deaconu-Renault systems. 

The usual definition of topological entropy (either via covers or via Bowen's metric definition) is associated with a function from a compact space $X$ to itself (in Bowen's case, the space can be non-compact). Given a Deaconu-Renault system $(X,\sigma)$, we deal with the facts that the domain of the map $\sigma$ is only an open subset of $X$ (not necessarily the whole space) and the space $X$ is not necessarily compact. We use approximations of $X$ by compact sets (not necessarily invariant compact subsets, as done in \cite{MR2504577}) and approximate the domains of the powers of $\sigma$ by closed sets from inside (in the case of Bowen's metric entropy). When $(X,\sigma)$ is a dynamical system in the classical sense, our definitions coincide with the usual ones but, for the general case, all the theory has to be developed. For example, the classical variational principle, which guarantees that Bowen's entropy and Adler, Konheim, and McAndrew entropy coincide does not hold (this is expected, as in \cite{HK} it is proved that Bowen's entropy for non-compact sets depends on the metric). In fact, we show that the topological entropy (our generalization of Adler, Konheim, and McAndrew entropy) of a Deaconu-Renault system is a lower bound to the metric entropy (our generalization of Bowen's entropy), see Theorem~\ref{zebra}. Due to the length of the paper, we left it for future work to explore if the topological entropy is equal to the infimum of the metric entropies, as it happens in \cite{Patrao, Patrao1}.

With our approach via Deaconu-Renault systems, we can define the entropy of the edge shift spaces associated with infinite graphs or ultragraphs (which include infinite matrices). In the more specific context of an infinite locally-finite graph, there are two well-established definitions of entropy: one due to Salama \cite{Sal1} and one due to Gurevich \cite{gurevich} (we refer the reader to \cite{LindMarcus} for an overview). Gurevich's entropy can be computed as a metric entropy (using Bowen's definition) and as the supremum of the entropies of finite subgraphs. We show that this entropy coincides with the metric entropy of the Deaconu-Renault system associated with the locally finite graph. We also show that the topological entropy (via covers) of the Deaconu-Renault system associated with a row-finite graph can be computed as the supremum of the entropies of finite graphs. As a direct corollary, we obtain that Gurevich entropy can be computed via covers and that our definition generalizes the Gurevich definition of entropy.

One of the interesting facts about the entropy of infinite graphs is that Salama \cite{Sal1} and Petersen \cite{Pet2} have given examples where entropy can increase via factors with respect to either Salama or Gurevich entropy. Motivated by this, we propose a definition of a factor map between Deaconu-Renault systems and show that the metric entropy decreases under uniform continuous factor maps. We also prove results regarding the metric entropy of invariant subsets, which allow us to compute the entropy of graphs formed by disconnected pieces. 

As one can see in \cite{Kitchens}, the entropy of certain countable state Markov shifts (associated with infinite matrices) has been studied in the literature, including the development of a Perron-Frobenius theory. Our approach to infinite matrices is to study the entropy of the Deaconu-Renault system corresponding to the ultragraph associated with the matrix (see \cite{Tomforde} for details). To give a glimpse of the deep combinatorics involved, we compute the topological entropy of the Renewal shift (which is an important example of a topologically mixing countable Markov shift and has been studied in \cite{Sarig, BEFR, MNTY} for instance).

This paper is organized as follows. In \S 2, we present preliminaries and establish some facts about conjugacy of Deaconu-Renault systems. In \S 3, we define the metric entropy of a Deaconu-Renault system, which is based on generalizations of Bowen's definitions of spanning and separated sets, and show that a uniformly continuous conjugacy, with uniformly continuous inverse, between two Deaconu-Renault systems preserve the metric entropy. We focus on factor maps in \S 4: we propose a definition of a factor map between Deaconu-Renault systems and show that the metric entropy always decreases under a uniformly continuous factor map (Theorem~\ref{factorentropy}). The behavior of the metric entropy of invariant subsets is our focus on the short \S 5. In \S 6, we define the topological entropy (via covers), which is more general than the metric entropy, since it can be defined in the absence of a metric. We prove that, in the case of a metric Deaconu-Renault system, the topological entropy is a lower bound to the metric entropy (Theorem \ref{zebra}). In \S 7, we focus on computing the entropy of Deaconu-Renault systems associated with graphs and ultragraphs. For row-finite graphs, we show that the topological entropy can be computed as the supremum of entropy of finite subgraphs (Theorem~\ref{sup dos subgrafos}). To finalize, we compute entropy for several examples, including finite graphs, locally finite graphs, row-finite graphs, disconnected graphs, and the ultragraph associated with the Renewal shift.

\section{Preliminaries}

Throughout this paper, we consider the set of the natural numbers as the set $\{1,2, \ldots \}$, which we denote by $\N$. 

Deaconu-Renault systems, or singly generated dynamical systems, were introduced independently by Deaconu in \cite{D95} and by Renault in \cite{Re00}. In this subsection, we recall some of its concepts and terminology, as introduced in \cite{ABCE}, which in turn follows \cite{CRST}.

\begin{definition} \cite{ABCE} A Deaconu-Renault system is a pair $(X,\sigma)$ consisting of a locally compact Hausdorff space $X$, and a local homeomorphism $\sigma: Dom(\sigma)\longrightarrow Im(\sigma)$ from an open set $Dom (\sigma)\subseteq X$ to an open set $Im(\sigma)\subseteq X$. Inductively define $Dom(\sigma^n):=\sigma^{-1}(Dom(\sigma^{n-1}))$, so each $\sigma^n:Dom(\sigma^n)\longrightarrow Im(\sigma^n)$ is a local homeomorphism and $\sigma^m\circ \sigma^n=\sigma^{m+n}$ on $Dom(\sigma^{m+n})$.
\end{definition}

In our work, we will prove that entropy is invariant under a class of conjugacies. Therefore, we recall the notion of conjugacy between Deaconu-Renault systems below.

\begin{definition}(\cite[Definition~2.1]{ABCE})\label{def of conjugacy}
We say that the Deaconu-Renault systems $(X,\sigma_X)$ and $(Y,\sigma_Y)$ are conjugate if there exists a homeomorphism $\phi:X\rightarrow Y$ such that $\sigma_Y(\phi(x)) = \phi(\sigma_X(x))$ and $\sigma_X(\phi^{-1}(y)) = \phi^{-1}(\sigma_Y(y))$ for all $x\in Dom(\sigma_X) $, and $y\in Dom(\sigma_Y)$. In this case, we say that $\phi$ is a conjugacy.
\end{definition}

Conjugacies between Deaconu-Renault systems can be further described as follows. Note that the conditions in \cref{item:inverse-images} below are set equalities.

\begin{lemma}\label{altconjug}\cite[Lemma~2.6]{ABCE} 
Let $(X,\sigma_X)$ and $(Y,\sigma_Y)$ be Deaconu--Renault systems, and let $\phi\colon X \to Y$ be a homeomorphism. The following are equivalent:
\begin{enumerate}
\item \label{item:conjugacy} $\phi\colon X \to Y$ is a conjugacy;
\item \label{item:domain} $\phi(Dom(\sigma_{X})) = Dom(\sigma_{Y})$, and $\phi \circ \sigma_X = \sigma_Y \circ \phi$ on $Dom(\sigma_{X})$;
\item \label{item:inverse-images} $\phi(\sigma_X^{-1}(x)) = \sigma_Y^{-1}(\phi(x))$ and $\phi^{-1}(\sigma_Y^{-1}(y)) = \sigma_X^{-1}(\phi^{-1}(y))$, for all $x \in X$ and $y \in Y$.
\end{enumerate}
\end{lemma}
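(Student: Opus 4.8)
The plan is to establish the cycle of equivalences by proving $\cref{item:conjugacy} \Leftrightarrow \cref{item:domain}$ and then $\cref{item:domain} \Leftrightarrow \cref{item:inverse-images}$, using throughout that $\phi$ is a bijection and that all three conditions are symmetric under interchanging the triples $(X,\sigma_X,\phi)$ and $(Y,\sigma_Y,\phi^{-1})$. This symmetry is what I would lean on to avoid duplicate work: in each implication it suffices to verify the ``$X$-to-$Y$'' half of each condition, since the reverse half is the same argument applied to $\phi^{-1}$. The single delicate thread running through every step is the bookkeeping of domains: before writing any composition such as $\sigma_Y\circ\phi$ one must know that $\phi$ actually carries $Dom(\sigma_X)$ into $Dom(\sigma_Y)$, and the three conditions encode this domain compatibility in three superficially different ways.

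For $\cref{item:conjugacy}\Rightarrow\cref{item:domain}$, I would first point out that the relation $\sigma_Y(\phi(x))=\phi(\sigma_X(x))$ for $x\in Dom(\sigma_X)$ already presupposes $\phi(x)\in Dom(\sigma_Y)$, so that $\phi(Dom(\sigma_X))\subseteq Dom(\sigma_Y)$; the companion relation for $\phi^{-1}$ gives the reverse inclusion, whence $\phi(Dom(\sigma_X))=Dom(\sigma_Y)$, and the intertwining equation of $\cref{item:domain}$ is then literally the first equality of $\cref{item:conjugacy}$. Conversely, $\cref{item:domain}\Rightarrow\cref{item:conjugacy}$ is trivial for the first equation, and for the second I would take $y\in Dom(\sigma_Y)$, write $y=\phi(x)$ with $x=\phi^{-1}(y)\in Dom(\sigma_X)$ (legitimate by the domain equality), and compute $\phi^{-1}(\sigma_Y(y))=\phi^{-1}(\sigma_Y(\phi(x)))=\phi^{-1}(\phi(\sigma_X(x)))=\sigma_X(\phi^{-1}(y))$.

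For $\cref{item:domain}\Rightarrow\cref{item:inverse-images}$, I would fix $x\in X$ and prove the set equality $\phi(\sigma_X^{-1}(x))=\sigma_Y^{-1}(\phi(x))$ by double inclusion: if $z\in Dom(\sigma_X)$ with $\sigma_X(z)=x$ then $\phi(z)\in Dom(\sigma_Y)$ and $\sigma_Y(\phi(z))=\phi(\sigma_X(z))=\phi(x)$, giving ``$\subseteq$''; conversely any $w\in\sigma_Y^{-1}(\phi(x))$ has $z:=\phi^{-1}(w)\in Dom(\sigma_X)$, and injectivity of $\phi$ applied to $\phi(\sigma_X(z))=\sigma_Y(w)=\phi(x)$ forces $\sigma_X(z)=x$, giving ``$\supseteq$''; the companion equality is then the symmetric statement for $\phi^{-1}$. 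For the closing implication $\cref{item:inverse-images}\Rightarrow\cref{item:domain}$, the key observation is that $Dom(\sigma_X)=\bigcup_{x\in X}\sigma_X^{-1}(x)$, so applying $\phi$, using $\cref{item:inverse-images}$, and invoking surjectivity of $\phi$ yields $\phi(Dom(\sigma_X))=\bigcup_{x\in X}\sigma_Y^{-1}(\phi(x))=Dom(\sigma_Y)$; and for $z\in Dom(\sigma_X)$, taking $x=\sigma_X(z)$ gives $\phi(z)\in\phi(\sigma_X^{-1}(x))=\sigma_Y^{-1}(\phi(x))$, i.e.\ $\sigma_Y(\phi(z))=\phi(\sigma_X(z))$, which is the intertwining of $\cref{item:domain}$. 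I expect the main obstacle to be purely organizational rather than conceptual: handling the possibly empty fibres $\sigma_X^{-1}(x)$ uniformly, tracking both directions of each set inclusion, and never manipulating $\sigma_Y\circ\phi$ at a point where it has not yet been shown to be defined.
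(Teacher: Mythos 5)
Your proposal is correct: the cycle (1)$\Leftrightarrow$(2), (2)$\Rightarrow$(3), (3)$\Rightarrow$(2) is complete, the domain bookkeeping is handled properly at every composition (in particular, reading the conjugacy equations as implicitly asserting $\phi(Dom(\sigma_X))\subseteq Dom(\sigma_Y)$ and $\phi^{-1}(Dom(\sigma_Y))\subseteq Dom(\sigma_X)$, which is the intended interpretation), and the appeal to symmetry for the second half of (3) is legitimate because by that point (2) has been shown equivalent to the manifestly symmetric condition (1). There is, however, nothing in the paper to compare against: the paper states this lemma as a quotation of \cite[Lemma~2.6]{ABCE} and gives no proof, so your argument is a self-contained replacement for that external citation rather than an alternative to an in-paper proof. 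One small observation your proof makes implicitly and could state outright: only the first set equality in (3), together with bijectivity of $\phi$, is ever used in (3)$\Rightarrow$(2), since the second equality follows from the first by applying $\phi^{-1}$; likewise no topological hypothesis on $\phi$ is needed anywhere --- the equivalence is purely set-theoretic, with the homeomorphism assumption only relevant to the surrounding definition of conjugacy.
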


Next, we show that a conjugacy preserves the domains of the iterates of the local homeomorphism $\sigma$.

\begin{lemma}\label{lemaconjuga}
Let $(X,\sigma_X)$ and $(Y,\sigma_Y)$ be Deaconu--Renault systems, and let $\phi\colon X \to Y$ be a conjugacy. Then, for each $i\in \N$, $\phi(Dom(\sigma_{X}^i)) = Dom(\sigma_{Y}^i)$, $\phi \circ \sigma_X^i = \sigma_Y^i \circ \phi$ on $Dom(\sigma_{X}^i)$, and $\sigma_X^i \circ \phi^{-1}= \phi^{-1} \circ \sigma_Y^i$ on $Dom(\sigma_{Y}^i)$. 
\end{lemma}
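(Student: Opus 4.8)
The plan is to prove the three claimed identities by induction on $i$, since each is a statement about iterates $\sigma^i$ and the base case $i=1$ is precisely what \cref{altconjug} (specifically \cref{item:domain}) already supplies. First I would establish the domain equality $\phi(Dom(\sigma_X^i)) = Dom(\sigma_Y^i)$ for all $i$, because the other two intertwining identities are statements \emph{on} these domains and so depend on knowing the domains correspond correctly under $\phi$. The inductive engine is the defining recursion $Dom(\sigma^n) = \sigma^{-1}(Dom(\sigma^{n-1}))$ from the first definition, combined with the fact (\cref{item:inverse-images}) that $\phi$ intertwines the preimage operators, $\phi(\sigma_X^{-1}(A)) = \sigma_Y^{-1}(\phi(A))$ for subsets $A$.

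For the domain step, I would argue as follows. Assume $\phi(Dom(\sigma_X^{i})) = Dom(\sigma_Y^{i})$. Then
\begin{align}
\phi(Dom(\sigma_X^{i+1})) &= \phi(\sigma_X^{-1}(Dom(\sigma_X^{i}))) = \sigma_Y^{-1}(\phi(Dom(\sigma_X^{i}))) = \sigma_Y^{-1}(Dom(\sigma_Y^{i})) = Dom(\sigma_Y^{i+1}),
\end{align}
where the first and last equalities are the recursive definition of the iterate domains, the second uses the preimage-intertwining property of \cref{item:inverse-images} applied to the set $Dom(\sigma_X^{i})$, and the third is the inductive hypothesis. One technical point I would verify is that \cref{item:inverse-images}, as stated for points $\phi(\sigma_X^{-1}(x)) = \sigma_Y^{-1}(\phi(x))$, upgrades to arbitrary sets; this is immediate because $\phi$ is a bijection, so it commutes with unions, and $\sigma_X^{-1}(A) = \bigcup_{x \in A}\sigma_X^{-1}(x)$.

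Next, for the intertwining identity $\phi \circ \sigma_X^i = \sigma_Y^i \circ \phi$ on $Dom(\sigma_X^i)$, I would again induct, using the cocycle relation $\sigma^m \circ \sigma^n = \sigma^{m+n}$ on $Dom(\sigma^{m+n})$ from the first definition. Assuming the identity for exponent $i$, and taking $x \in Dom(\sigma_X^{i+1})$, I would write $\sigma_X^{i+1}(x) = \sigma_X^i(\sigma_X(x))$, apply the base case $\phi \circ \sigma_X = \sigma_Y \circ \phi$ (valid since $x \in Dom(\sigma_X)$) and then the inductive hypothesis (valid since $\sigma_X(x) \in Dom(\sigma_X^i)$, which follows from the recursive definition of the domain). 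The care needed here is purely bookkeeping: one must confirm at each step that the argument lies in the domain on which the previous identity is known to hold, and the domain equalities established in the first step guarantee this. The final identity $\sigma_X^i \circ \phi^{-1} = \phi^{-1} \circ \sigma_Y^i$ on $Dom(\sigma_Y^i)$ is then obtained by applying $\phi^{-1}$ on both sides of the second identity and substituting $y = \phi(x)$, using that $\phi$ restricts to a bijection $Dom(\sigma_X^i) \to Dom(\sigma_Y^i)$ by the first step.

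I do not expect a genuine obstacle here; the statement is a routine strengthening of \cref{altconjug} from $i=1$ to all $i$. The only place demanding attention is the set-versus-point form of the preimage-intertwining property and the consistent tracking of domains through the induction, so that every composition is defined where it is claimed. Since the lemma will be used repeatedly to transport entropy-theoretic data between conjugate systems, stating all three identities explicitly (rather than deriving two from one each time) is the sensible packaging.
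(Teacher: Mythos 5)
Your argument is correct, but it is packaged differently from the paper's. The paper runs one simultaneous (strong) induction on $i$ proving all three statements at once, working pointwise and directly from Definition~\ref{def of conjugacy}: for the inclusion $\phi(Dom(\sigma_X^i))\subseteq Dom(\sigma_Y^i)$ it peels off the last application of $\sigma_X$ (writing $\sigma_X^i(x)=\sigma_X(\sigma_X^{i-1}(x))$ and using the case $i=1$ at the point $\sigma_X^{i-1}(x)$), and for the reverse inclusion it performs the analogous computation with $\phi^{-1}$, which simultaneously yields the identity $\sigma_X^i\circ\phi^{-1}=\phi^{-1}\circ\sigma_Y^i$. You instead modularize: the domain equality is proved first by a set-level induction combining the recursion $Dom(\sigma^{i+1})=\sigma^{-1}(Dom(\sigma^{i}))$ with the preimage-intertwining property of item (3) of Lemma~\ref{altconjug} (upgraded from points to sets, which is legitimate since images commute with unions); then the intertwining identity gets its own induction; and the third identity is a purely formal consequence of the first two, with no induction at all. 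What each buys: your route makes the domain equality a one-line computation, avoids the two-inclusion pointwise argument, and makes transparent that the third identity is not independent data; the paper's route is self-contained relative to the definition of conjugacy (it never invokes Lemma~\ref{altconjug}) and obtains all three claims in a single pass. The only points needing care in your version --- the set-level upgrade of the preimage identity, and checking $\phi(x)\in Dom(\sigma_Y^{i+1})$ before applying the relation $\sigma_Y^i\circ\sigma_Y=\sigma_Y^{i+1}$ --- are both handled correctly in your write-up.
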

\begin{proof}
We prove the lemma by induction over $i$. The case $i=1$ follows from Definition $\ref{def of conjugacy}$. 
Let $i>1$ and suppose that $\phi(Dom(\sigma_{X}^k)) = Dom(\sigma_{Y}^k)$, $\phi \circ \sigma_X^k = \sigma_Y^k \circ \phi$ on $Dom(\sigma_{X}^k)$ and $\sigma_X^k \circ \phi^{-1}= \phi^{-1} \circ \sigma_Y^k$ on $Dom(\sigma_{Y}^k)$ for all $k\in \{1,\ldots , i-1\}$. 

If $x\in Dom(\sigma_X^i)$, then $\sigma_X^{i-1}(x)\in Dom(\sigma_X)$ and so $\phi(\sigma_X(\sigma_X^{i-1}(x)))=\sigma_Y(\phi(\sigma_X^{i-1}(x)))=\sigma_Y^i(\phi(x))$. This means that $\phi(Dom(\sigma_X^i))\subseteq Dom(\sigma_Y^i)$ and  $\phi \circ \sigma_X^i = \sigma_Y^i \circ \phi$. To show that $\phi(Dom(\sigma_X^i))\supseteq Dom(\sigma_Y^i)$, fix $y\in Dom(\sigma_Y^i)$. In this case, $\sigma_Y^{i-1}(y)\in Dom(\sigma_Y)=\phi(Dom(\sigma_X))$. Therefore, $\phi^{-1}(\sigma_Y^{i-1}(y))\in Dom(\sigma_X)$ and we can apply $\sigma_X$. So $\phi^{-1}(\sigma_Y^i(y))=\phi^{-1}(\sigma_Y(\sigma_Y^{i-1}(y)))=\sigma_X(\phi^{-1}(\sigma_Y^{i-1}(y)))=\sigma_X(\sigma_X^{i-1}(\phi^{-1}(y)))=\sigma_X^i(\phi^{-1}(y))$, which means that $\phi^{-1}(y)\in Dom(\sigma_X^i)$ and so $y\in \phi(Dom(\sigma_X^i))$. The above equality also shows that $\sigma_X^i \circ \phi^{-1}= \phi^{-1} \circ \sigma_Y^i$ on $Dom(\sigma_{Y}^i)$.
\end{proof}

\section{The metric entropy of Deaconu-Renault systems.}

In this section, we always assume that $(X,\sigma)$ is a Deaconu-Renault system with $X$ a metric space with metric $d$. 
  For $n\in\NN$ and $x\in X$, using the convention that $Dom (\sigma^0)=X$, we define 
\begin{align}\label{def:I_n}
 I_n(x)=\{i\in\{0,1,\dots, n-1\}:x\in Dom(\sigma^i)\}.
 \end{align}
 
 \begin{remark} Clearly, $I_n(x)\not=\emptyset$ for any $n\in \N$ and $x\in X$, since $0$ is always in $I_n(x)$.
 Moreover, $I_n(x)$ is always an initial segment of $\N\cup \{0\}$. More specifically, either $I_n(x)=\{0,\ldots ,n-1\}$, or 
 $I_n(x)=\{0,\ldots ,m\}$ with $m<n-1$, where $m$ is the greatest natural number $i$ such that $x\in Dom(\sigma^i)$ (which happens if $x\notin Dom(\sigma^{n-1})$).
 
  In the study of entropy of partial actions of $\Z$, the same set $I_n(x)$ as above is defined, see \cite{Entropyforhomeo}. Nevertheless, in the partial action setting it is not necessary that $I_n(x)$ is an initial segment of $\N\cup \{0\}$.
 \end{remark}
 
 To measure the distance between the iterates of two points, we define the map $d_n:X\times X\to[0, +\infty)$, for $n\in \N$, by
 $$
 d_n(x,y)=\max_{i\in I_n(x)\cap I_n(y)} d(\sigma^i(x),\sigma^i(y)).
 $$

Notice that if $I_n(x)\cap I_n(y)=\{0\}$  then $d_n(x,y)=d(x,y)$.
Another important observation is that, as it happened with partial actions (see \cite{Entropyforhomeo}), $d_n$ may not be a metric, because it is not possible to guarantee the triangular inequality, as the next example shows.

\begin{example}\label{contraexemplo}
Let $(X,\sigma)$ be the Deaconu-Renault system where $X=[0,1)$, $\sigma$ is given by $\sigma (x) = 2 x$, and $Dom(\sigma)=[0,\frac{1}{2})$. Inductively, we obtain that $Dom(\sigma^n)=[0,\frac{1}{2^n})$, $Im(\sigma^n)=[0,1)$ and $\sigma^n (x) = 2^n x$, for all $n\geq 0$. Notice that $d_3(0,\frac{6}{25})= \frac{24}{25}$. On the other hand, $d_3(0,\frac{1}{4})+d_3(\frac{1}{4},\frac{6}{25}) = \frac{1}{2}+\frac{1}{50}= \frac{13}{25}$, since $I_n(\frac{1}{4})=\{0,1\}, \forall n\geq 2$. Hence, the triangle inequality is not satisfied for $d_3$. 
\end{example}

Even if we require the domains of the local homeomorphisms to be clopen, $d_n$ may still fail the triangle inequality, as we show in the next example.

\begin{example}\label{siri} Let $X=\prod\limits_{n=0}^\infty \{0,1\}$ and, for $(x_n)_n, (y_n)_n\in X$, define $d((x_n)_n,(y_n)_n) = \frac{1}{2^i}$, where $i$ is the smallest index where $x_n\neq y_n$. We define a local homeomorphism on $X$: let $[0^n]=\{(x_j)_j\in X: x_j = 0 \text{ for each } \ 1\leq j \leq n\}$, $Dom(\sigma^n)=[0^{3n}]$, and $Im(\sigma^n)=X$, for every $n\in \N$. Of course, $Dom(\sigma^0)=X$. The homeomorphism $\sigma^n:[0^{3n}] \rightarrow X$, for $n\in \N\cup \{0\}$, is defined by $\sigma^n((x_i)_i)= (x_{i+3n})_i $. Let $x=000111\ldots $, $y=0000111\ldots $, and $z=00111\ldots $. Then, $d_2(x,y)=1$ but $d_2(x,z)= \frac{1}{4}$ and $d_2(z,y)= \frac{1}{4}$.

\end{example}

\begin{remark}\label{dn restricted} As seen in Examples \ref{contraexemplo} and \ref{siri}, the map $d_n$ may not be a metric, since the triangular inequality may fail. However, the restriction of $d_n$ to $Dom(\sigma^{n-1})$ is a metric, since the triangular inequality holds in $Dom(\sigma^{n-1})$.
\end{remark}

The following lemma will be used later.

\begin{lemma}\label{lema d_n} Let $(X,\sigma)$ be a Deaconu-Renault system, $\varepsilon>0$, $n\in \N$, and $x_1,x_2\in X$ be such that $d_n(x_1,x_2)>\varepsilon$. Then, there exists open neighborhoods $V_1$ of $x_1$ and $V_2$ of $x_2$ such that $d_n(z_1,z_2)>\varepsilon$ for all $z_1\in V_1$ and $z_2\in V_2$.
\end{lemma}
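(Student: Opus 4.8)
The plan is to reduce the statement about the (non-metric) map $d_n$ to an ordinary continuity argument for the honest metric $d$ on $X$, using the observation that the strict inequality $d_n(x_1,x_2)>\varepsilon$ is witnessed by a single index. First I would unpack the definition: since $d_n(x_1,x_2)=\max_{i\in I_n(x_1)\cap I_n(x_2)} d(\sigma^i(x_1),\sigma^i(x_2))>\varepsilon$, there is an index $i_0\in I_n(x_1)\cap I_n(x_2)$ with $d(\sigma^{i_0}(x_1),\sigma^{i_0}(x_2))>\varepsilon$. By the definition of $I_n$ in \eqref{def:I_n}, membership of $i_0$ in both sets means precisely that $x_1,x_2\in Dom(\sigma^{i_0})$, which is an open set on which $\sigma^{i_0}$ is a continuous local homeomorphism.

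Next I would set $\delta:=d(\sigma^{i_0}(x_1),\sigma^{i_0}(x_2))-\varepsilon>0$ and invoke continuity of $\sigma^{i_0}$ at $x_1$ and at $x_2$. Concretely, I would take $V_j$ (for $j=1,2$) to be the preimage under $\sigma^{i_0}$ of the open ball of radius $\delta/2$ centred at $\sigma^{i_0}(x_j)$; since $Dom(\sigma^{i_0})$ is open and $\sigma^{i_0}$ is continuous, each $V_j$ is an open neighborhood of $x_j$ contained in $Dom(\sigma^{i_0})$. The containment $V_1,V_2\subseteq Dom(\sigma^{i_0})$ is the essential bookkeeping point: it forces $i_0\in I_n(z_1)\cap I_n(z_2)$ for every $z_1\in V_1$ and $z_2\in V_2$, so the index $i_0$ continues to appear among the terms of the maximum defining $d_n(z_1,z_2)$.

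Finally, for $z_1\in V_1$ and $z_2\in V_2$ the reverse triangle inequality for the genuine metric $d$ gives $d(\sigma^{i_0}(z_1),\sigma^{i_0}(z_2))\ge d(\sigma^{i_0}(x_1),\sigma^{i_0}(x_2))-d(\sigma^{i_0}(x_1),\sigma^{i_0}(z_1))-d(\sigma^{i_0}(x_2),\sigma^{i_0}(z_2))>(\varepsilon+\delta)-\tfrac{\delta}{2}-\tfrac{\delta}{2}=\varepsilon$, where the strictness comes from the two open-ball estimates. Since $i_0\in I_n(z_1)\cap I_n(z_2)$, this quantity is one of the terms bounded by the maximum, so $d_n(z_1,z_2)\ge d(\sigma^{i_0}(z_1),\sigma^{i_0}(z_2))>\varepsilon$, as required. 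I do not expect a serious obstacle here; the only point demanding care is that $d_n$ fails the triangle inequality (Examples~\ref{contraexemplo} and \ref{siri}), so one must \emph{not} argue with $d_n$-balls directly. The whole argument is instead carried out at the single witnessing level $i_0$ in the honest metric $d$, and the openness of $Dom(\sigma^{i_0})$ is exactly what guarantees that this level survives to an entire neighborhood of each point.
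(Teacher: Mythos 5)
Your proposal is correct and follows essentially the same argument as the paper: both isolate a witnessing index $i_0\in I_n(x_1)\cap I_n(x_2)$, take $V_j=\sigma^{-i_0}\bigl(\mathcal{B}(\sigma^{i_0}(x_j),r)\bigr)$ with exactly the same radius $r=\tfrac{d(\sigma^{i_0}(x_1),\sigma^{i_0}(x_2))-\varepsilon}{2}$, and use the openness of $Dom(\sigma^{i_0})$ to ensure $i_0$ survives into $I_n(z_1)\cap I_n(z_2)$. The only cosmetic difference is that you apply the reverse triangle inequality directly where the paper argues by contradiction with the forward triangle inequality.
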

\begin{proof} Since $d_n(x_1,x_2)>\varepsilon$, then $d(\sigma^j(x_1),\sigma^j(x_2))>\varepsilon$ for some $j\in I_n(x_1)\cap I_n(x_2)$. 
Let $r=\frac{d(\sigma^j(x_1),\sigma^j(x_2))-\varepsilon}{2}$. Define $V_1=\sigma^{-j}(B(\sigma^j(x_1),r))$ and $V_2=\sigma^{-j}(B(\sigma^j(x_2),r))$, which are open neighborhoods of $x_1$ and $x_2$, respectively. Let $z_1\in V_1$ and $z_2\in V_2$. Then  $\sigma^j(z_i)\in B(\sigma^j(x_i),r)$ for $i=1,2$. Suppose that $d(\sigma^j(z_1), \sigma^j(z_2))\leq \varepsilon$. In this case, it follows that $d(\sigma^j(x_1), \sigma^j(x_2))\leq d(\sigma^j(x_1),\sigma^j(z_1))+d(\sigma^j(z_1),\sigma^j(z_2))+d(\sigma^j(z_2),\sigma^j(x_2))<2r+\varepsilon = d(\sigma^j(x_1),\sigma^j(x_2))$, which is a contradiction. So, it follows that $d(\sigma^j(z_1),\sigma^j(z_2))>\varepsilon$ and hence $d_n(z_1,z_2)>\varepsilon$ for all $z_1\in V_1$ and $z_2\in V_2$.
\end{proof}

To define the metric entropy of a Deaconu-Renault system, we need the notion of separated and spanning sets. Since the domain of the local homeomorphism does not need to be closed, we develop a generalization of these concepts below. 

\begin{definition}\label{definicaosepspan} Let $(X,\sigma)$ be a Deaconu-Renault system, $K$ a compact subset of $X$, $n\in\N$, and $Y$ a subset of $X$. Given $\varepsilon>0$, we say that:
\begin{itemize}
 \item a subset $A\subset K\cap Y$ is an $(n,\varepsilon,\sigma, K\cap Y)$-separated set if $d_n(x,y)>\varepsilon$ for every $x,y\in A $, $x\neq y$.
 
 \item a subset $B\subset K\cap Y$ is an $(n,\varepsilon,\sigma, K\cap Y)$-spanning set if, for every $x\in K\cap Y$, 
 there exists $y\in B$ such that  $d_n(x,y)\leq\varepsilon$.
 \end{itemize}
 We use the notation $sep(n,\varepsilon, \sigma, K\cap Y)$ to denote the largest cardinality of the $(n,\varepsilon,\sigma, K\cap Y)$-separated sets and $span(n,\varepsilon, \sigma, K\cap Y)$ to denote the smallest cardinality of the $(n,\varepsilon,\sigma, K\cap Y)$-spanning sets. 
 \end{definition}

 Closed subsets $F$ inside the domain of $\sigma^{n-1}$ will play a key role in the definition of spanning and separated sets of compact sets. Next, we describe how $sep(n,\varepsilon, \sigma, K\cap F)$ and $span(n,\varepsilon, \sigma, K\cap F)$ relate and show that these quantities are always finite. 

\begin{lemma}\label{sirigordo}
Let $(X,\sigma)$ be a Deaconu-Renault system, $K$ a compact subset of $X$, $n\in\N$, and $F$ a closed set of $X$ contained in $Dom(\sigma^{n-1})$.
Then, for each $\varepsilon>0$ and $n\in \N$ we have that:
\begin{enumerate}
    \item There exist a finite $(n,\varepsilon,\sigma, K\cap F)$-spanning set. Hence, $span(n,\varepsilon, \sigma, K\cap F)$ is always finite.
    \item The following inequalities are true:
\[ span(n,\varepsilon, \sigma, K\cap F)\leq sep(n,\varepsilon, \sigma, K\cap F) \leq span(n,\frac{\varepsilon}{2}, \sigma, K\cap F).
\]
\end{enumerate}
\end{lemma}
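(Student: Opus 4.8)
The plan is to first exploit the hypothesis $F \subseteq Dom(\sigma^{n-1})$ to reduce everything to an honest compact metric space. Since the domains are nested, $Dom(\sigma^{n-1}) \subseteq Dom(\sigma^i)$ for every $0 \le i \le n-1$, so each $x \in F$ satisfies $I_n(x) = \{0,1,\dots,n-1\}$. Consequently, for $x,y \in K\cap F$ we have $d_n(x,y) = \max_{0\le i \le n-1} d(\sigma^i(x),\sigma^i(y))$ and, by \cref{dn restricted}, $d_n$ is a genuine metric on $K\cap F$; in particular the triangle inequality is available there, which is what the two inequalities in part (2) will rely on.

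For part (1), I would consider the map $\Phi\colon F \to X^n$ given by $\Phi(x) = (x,\sigma(x),\dots,\sigma^{n-1}(x))$, which is continuous because each $\sigma^i$ is continuous on $Dom(\sigma^i)\supseteq F$. Equipping $X^n$ with the max-metric $d_\infty((a_i)_i,(b_i)_i)=\max_i d(a_i,b_i)$, the displayed formula for $d_n$ says precisely that $\Phi$ is an isometry from $(K\cap F, d_n)$ onto $(\Phi(K\cap F), d_\infty)$. Since $K\cap F$ is compact (a closed set intersected with a compact set) in the topology of $X$, and $\Phi$ is continuous for that topology, $\Phi(K\cap F)$ is a compact, hence totally bounded, subset of $(X^n,d_\infty)$. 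Pulling a finite $\varepsilon$-net back through the isometry $\Phi$ then yields a finite $(n,\varepsilon,\sigma,K\cap F)$-spanning set, proving that $span(n,\varepsilon, \sigma, K\cap F)$ is finite.

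For part (2), I would establish the right-hand inequality first, since it also bounds $sep$. Fix an $(n,\frac{\varepsilon}{2},\sigma,K\cap F)$-spanning set $B$ of minimal cardinality (finite by part (1)) and let $A$ be any $(n,\varepsilon,\sigma,K\cap F)$-separated set. Assigning to each $a\in A$ some $b\in B$ with $d_n(a,b)\le \frac{\varepsilon}{2}$ defines a map $A\to B$ that is injective: if $a\ne a'$ mapped to the same $b$, then $d_n(a,a')\le d_n(a,b)+d_n(b,a')\le \varepsilon$ by the triangle inequality on $K\cap F$, contradicting $d_n(a,a')>\varepsilon$. Hence $|A|\le |B|=span(n,\frac{\varepsilon}{2},\sigma,K\cap F)$, and taking the supremum over $A$ gives $sep(n,\varepsilon,\sigma,K\cap F)\le span(n,\frac{\varepsilon}{2},\sigma,K\cap F)$; in particular $sep$ is finite. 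For the left-hand inequality, I would take a maximal (by inclusion) $(n,\varepsilon,\sigma,K\cap F)$-separated set $A$, which exists and is finite by the previous bound. Maximality forces $A$ to be spanning: any $x\in K\cap F$ with $d_n(x,a)>\varepsilon$ for all $a\in A$ could be adjoined to $A$, contradicting maximality, so some $a\in A$ has $d_n(x,a)\le\varepsilon$. Therefore $span(n,\varepsilon,\sigma,K\cap F)\le |A|\le sep(n,\varepsilon,\sigma,K\cap F)$.

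I expect the only genuine subtlety to lie in part (1). One must notice that $d_n$ need not induce the subspace topology on $K\cap F$ — indeed $d_n\ge d$, so it is generally strictly finer — so compactness of $K\cap F$ in $X$ does not by itself transfer to $(K\cap F,d_n)$. The embedding $\Phi$ is what circumvents this, realizing $(K\cap F,d_n)$ as an honest compact subspace of $(X^n,d_\infty)$ from which total boundedness, and hence the finite spanning set, follows. Once that is in place, the two inequalities are the classical separated/spanning comparison, whose only requirement is that $d_n$ satisfy the triangle inequality on $K\cap F$ — exactly what the containment $F\subseteq Dom(\sigma^{n-1})$ together with \cref{dn restricted} guarantees.
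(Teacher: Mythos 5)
Your proposal is correct, and your part (2) coincides with the paper's own argument: the same injection of an $(n,\varepsilon,\sigma,K\cap F)$-separated set into an $(n,\frac{\varepsilon}{2},\sigma,K\cap F)$-spanning set via the triangle inequality (available on $K\cap F$ by Remark~\ref{dn restricted}), and the same observation that a maximal separated set must be spanning. Part (1) is where you genuinely diverge. The paper covers $K\cap F$ by the dynamical balls $U(x,n,\varepsilon)=\bigcap_{i\in I_n(x)}\sigma^{-i}\left(\mathcal{B}(\sigma^i(x),\varepsilon)\right)$, which are open in $X$, extracts a finite subcover by compactness of $K\cap F$, and takes the centers as the spanning set. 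You instead exhibit $(K\cap F,d_n)$ as a compact metric space by embedding it isometrically into $(X^n,d_\infty)$ through $\Phi(x)=(x,\sigma(x),\dots,\sigma^{n-1}(x))$, and then use total boundedness. The two mechanisms are closely related --- your $d_\infty$-balls pull back under $\Phi$ exactly to the paper's dynamical balls --- but the trade-off is real: your embedding needs every point of $F$ to have full itinerary $I_n(x)=\{0,\dots,n-1\}$, so the hypothesis $F\subseteq Dom(\sigma^{n-1})$ is already consumed in part (1), whereas the paper's cover argument intersects only over $i\in I_n(x)$ and hence proves finiteness of $span(n,\varepsilon,\sigma,K\cap F)$ for an arbitrary closed $F$, the domain hypothesis entering only in part (2) for the triangle inequality. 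What your route buys in exchange is conceptual clarity: once $(K\cap F,d_n)$ is an honest compact metric space, both items become the classical spanning/separated comparison in that space, and the subtlety you flag --- that compactness of $K\cap F$ in $X$ does not automatically transfer to the finer distance $d_n$ --- is settled explicitly rather than sidestepped (indeed, your $\Phi$ also shows the $d$- and $d_n$-topologies agree on $K\cap F$, since a continuous bijection from a compact space onto a Hausdorff space is a homeomorphism).
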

\begin{proof} Let $X,K,F$ be as in the hypotheses and fix $\varepsilon>0$ and $n\in \N$.

For each $x\in K\cap F$ consider the open set $$U(x,n,\varepsilon)=\displaystyle\bigcap_{i\in I_n(x)}\sigma^{-i}\left(\mathcal{B}(\sigma^i(x),\varepsilon)\right).$$ Notice that $d_n(x,y)<\varepsilon$ for each $y\in U(x,n,\varepsilon)$. Moreover, the set $U:=\bigcup\limits_{x\in K\cap F} U(x,n,\varepsilon)$ is an open cover of $K\cap F$ and, since $K\cap F$ is compact, there is a finite subcover 
 $$U(x_1,n,\varepsilon)\cup \ldots \cup U(x_m,n,\varepsilon).$$ Therefore, the set $B=\{x_1,\ldots ,x_m\}$ is an $(n,\varepsilon,\sigma, K\cap F)$-spanning set.
 
For the second item, we prove first that 
$sep(n,\varepsilon, \sigma, K\cap F) \leq span(n,\frac{\varepsilon}{2}, \sigma, K\cap F)$. 
Let $A$ be an $(n,\varepsilon, \sigma, K\cap F)$-separated set and $B$ an $(n,\frac{\varepsilon}{2}, \sigma, K\cap F)$-spanning set. Define a map $\Phi:A\rightarrow B$ by picking, for each point $x\in A$, a point $\Phi(x)$  in $B$ such that $d_n(x,\Phi(x))\leq \frac{\varepsilon}{2}$. Notice that $\Phi$ is injective. Indeed, if $x,y\in A$ are such that $\Phi(x)=\Phi(y)$ then, since $d_n$ satisfies the triangle inequality in $F$ (see Remark \ref{dn restricted}), we have that $d_n(x,y)\leq d_n(x,\Phi(x))+d_n(\Phi(x),y)\leq \varepsilon$ and, since $A$ is $(n,\varepsilon, \sigma, K\cap F)$-separated, we conclude that $x=y$. From the injectivity of $\Phi$ we obtain that the cardinality of $A$ is less or equal to the cardinality of $B$ and hence $sep(n,\varepsilon, \sigma, K\cap F) \leq span(n,\frac{\varepsilon}{2}, \sigma, K\cap F)$.

Now, let $C$ be a $(n,\varepsilon,\sigma, K\cap F)$-separated set of maximum cardinality. Then, for each $x\in (K\cap F)\setminus C$, there exists some $y\in C$ such that $d_n(x,y)\leq \varepsilon$, since otherwise $C\cup\{x\}$ is an $(n,\varepsilon,\sigma,K\cap F)$-separated set with cardinality greater than the cardinality of $C$. Therefore, $C$ is an $(n,\varepsilon, \sigma, K\cap F)$-spanning set, from where the first inequality of Item~2 follows.

\end{proof}

\begin{remark}\label{dynamicball}
For $x\in X$, $n\in \N$ and $\varepsilon>0$, it is customary to call the open set $U(x,n,\varepsilon)=\displaystyle\bigcap_{i\in I_n(x)}\sigma^{-i}\left(\mathcal{B}(\sigma^i(x),\varepsilon)\right)$ an open dynamical ball at $x$, see for example {\cite{Entropyforhomeo}}. As we observed in the proof above, $d_n(x,y)<\varepsilon$ for each $y\in U(x,n,\varepsilon)$.
\end{remark}

In our approach to define the entropy of Deaconu-Renault systems, we approximate $Dom(\sigma^{n-1})$, which is open, by closed sets from inside. In this manner, we obtain a generalization of the usual notion of separated and spanning sets, which we make precise below.

\begin{definition}\label{papagaio} Let $(X,\sigma)$ be a Deaconu-Renault system, $K$ be a compact subset of $X$, $n\in\N$, and $\varepsilon>0$. Define
\[ssep(n,\varepsilon, \sigma, K)= \displaystyle \sup_{F}sep(n,\varepsilon, \sigma, K\cap F),\] and 
\[sspan(n,\varepsilon, \sigma, K)= \displaystyle \sup_{F}span(n,\varepsilon, \sigma, K\cap F),\]
 where the suprema are taken over all closed sets $F\subseteq Dom(\sigma^{n-1}).$
\end{definition}

\begin{remark}\label{uniao de fechados}
It is always possible to approximate an open set from the inside with closed sets. Indeed, let $X$ be a metric space and $A\subseteq X$ be an open set. Define, for each $i\in \N$, 
$B_i=\{x\in X:d(x,A^c)\geq \frac{1}{i}\}$. Notice that $B_i\subseteq B_{i+1}\subseteq A$ for each $i\in \N$ and  $\bigcup\limits_{i\in \N}B_i=A$. 
\end{remark}

The following result is a direct consequence of Lemma~\ref{sirigordo}.

\begin{proposition}\label{leao}
Let $(X,\sigma)$ be a Deaconu-Renault system, $K$ a compact subset of $X$, $n\in\N$, and $\varepsilon>0$. Then, 
\[ sspan(n,\varepsilon, \sigma, K)\leq ssep(n,\varepsilon, \sigma, K) \leq sspan(n,\frac{\varepsilon}{2}, \sigma, K).
\]
\end{proposition}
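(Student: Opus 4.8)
The plan is to derive \cref{leao} directly from \cref{sirigordo} by passing to suprema over closed sets $F \subseteq Dom(\sigma^{n-1})$. Recall that for each such fixed $F$, \cref{sirigordo}(2) already gives the chain of inequalities
\[
span(n,\varepsilon, \sigma, K\cap F)\leq sep(n,\varepsilon, \sigma, K\cap F) \leq span(n,\tfrac{\varepsilon}{2}, \sigma, K\cap F).
\]
The definitions in \cref{papagaio} set $ssep$ and $sspan$ to be precisely the suprema of the quantities $sep(n,\varepsilon,\sigma,K\cap F)$ and $span(n,\varepsilon,\sigma,K\cap F)$ over all closed $F \subseteq Dom(\sigma^{n-1})$. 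Thus the entire proof is an application of the elementary monotonicity principle: if $f(F) \le g(F)$ for every $F$ in an index set, then $\sup_F f(F) \le \sup_F g(F)$.

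First I would establish the left inequality $sspan(n,\varepsilon, \sigma, K)\leq ssep(n,\varepsilon, \sigma, K)$. Fix an arbitrary closed set $F \subseteq Dom(\sigma^{n-1})$. By the first inequality of \cref{sirigordo}(2), we have $span(n,\varepsilon, \sigma, K\cap F)\leq sep(n,\varepsilon, \sigma, K\cap F) \leq ssep(n,\varepsilon, \sigma, K)$, where the last step uses that $ssep$ is the supremum over all such $F$. Since this bound $span(n,\varepsilon, \sigma, K\cap F) \le ssep(n,\varepsilon, \sigma, K)$ holds for every closed $F \subseteq Dom(\sigma^{n-1})$, taking the supremum of the left-hand side over all such $F$ yields $sspan(n,\varepsilon, \sigma, K) \le ssep(n,\varepsilon, \sigma, K)$.

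Next I would establish the right inequality $ssep(n,\varepsilon, \sigma, K) \leq sspan(n,\tfrac{\varepsilon}{2}, \sigma, K)$ by the same mechanism. For an arbitrary closed $F \subseteq Dom(\sigma^{n-1})$, the second inequality of \cref{sirigordo}(2) gives $sep(n,\varepsilon, \sigma, K\cap F) \leq span(n,\tfrac{\varepsilon}{2}, \sigma, K\cap F) \leq sspan(n,\tfrac{\varepsilon}{2}, \sigma, K)$, the last step again being the definition of $sspan$ as a supremum over $F$. As this holds for every closed $F \subseteq Dom(\sigma^{n-1})$, taking the supremum over $F$ on the left gives $ssep(n,\varepsilon, \sigma, K) \le sspan(n,\tfrac{\varepsilon}{2}, \sigma, K)$, completing the proof.

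There is essentially no obstacle here; the result is labeled ``a direct consequence of \cref{sirigordo}'' precisely because the work of bounding cardinalities of spanning and separated sets, together with the compactness argument guaranteeing these quantities are finite, was already carried out in \cref{sirigordo}. The only point requiring a moment's care is ensuring the suprema are compared over the \emph{same} index set of closed sets $F \subseteq Dom(\sigma^{n-1})$ on both sides, which is immediate since the $\varepsilon$ and $\tfrac{\varepsilon}{2}$ parameters do not affect which sets $F$ are admissible. One could note that since each $sep$ and $span$ term is finite by \cref{sirigordo}(1), the suprema are well-defined as extended real numbers, but no finiteness of the supremum itself is claimed or needed.
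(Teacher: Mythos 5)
Your proof is correct and matches the paper exactly: the paper gives no written argument for \cref{leao}, stating only that it is ``a direct consequence of Lemma~\ref{sirigordo}'', and the intended argument is precisely your pointwise application of \cref{sirigordo}(2) to each closed $F\subseteq Dom(\sigma^{n-1})$ followed by taking suprema. Your explicit handling of the two inequalities and the remark on the common index set of closed sets fill in that omitted routine verification faithfully.
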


It is important to notice that $ ssep(n,\varepsilon, \sigma, K)$ can be defined directly, without approximating from inside. This is the content of the next proposition.

\begin{proposition}\label{propnovossepspan}
Let $(X,\sigma)$ be a Deaconu-Renault system, $K$ a compact subset of $X$, $n\in\N$, and $\varepsilon>0$. Then: 
\begin{enumerate}
    \item $$ ssep(n,\varepsilon, \sigma, K) = sep(n,\varepsilon, \sigma, K\cap Dom(\sigma^{n-1})).$$
    \item $$span(n,\varepsilon, \sigma, K\cap Dom(\sigma^{n-1})) \leq sspan(n,\frac{\varepsilon}{2}, \sigma, K).$$
    \end{enumerate}
\end{proposition}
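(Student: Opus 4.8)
The plan is to prove the two items separately, writing $D := Dom(\sigma^{n-1})$ throughout. The whole argument rests on two elementary observations: that the separated/spanning conditions ($d_n(x,y)>\varepsilon$, resp.\ the existence of $y$ with $d_n(x,y)\leq\varepsilon$) depend only on the set of points involved and not on the ambient set $K\cap Y$; and that in the metric space $X$ every finite set is closed.

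For item (1), the inequality $ssep(n,\varepsilon,\sigma,K)\leq sep(n,\varepsilon,\sigma,K\cap D)$ is immediate from monotonicity: if $F\subseteq D$ is closed then $K\cap F\subseteq K\cap D$, so any $(n,\varepsilon,\sigma,K\cap F)$-separated set is also an $(n,\varepsilon,\sigma,K\cap D)$-separated set, whence $sep(n,\varepsilon,\sigma,K\cap F)\leq sep(n,\varepsilon,\sigma,K\cap D)$; taking the supremum over $F$ gives the claim. For the reverse inequality I would capture separated subsets by finite closed sets: given any finite $(n,\varepsilon,\sigma,K\cap D)$-separated set $A$ of cardinality $M$, the set $F:=A$ is closed (being finite) and contained in $D$, and since $A\subseteq K$ we have $K\cap F=A$. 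As the separation condition is internal to $A$, it is an $(n,\varepsilon,\sigma,K\cap F)$-separated set, so $sep(n,\varepsilon,\sigma,K\cap F)\geq M$ and hence $ssep(n,\varepsilon,\sigma,K)\geq M$. Since every finite cardinality $M\leq sep(n,\varepsilon,\sigma,K\cap D)$ is realised by a finite separated subset (subsets of separated sets are separated), letting $M$ run over these values yields $ssep(n,\varepsilon,\sigma,K)\geq sep(n,\varepsilon,\sigma,K\cap D)$, including the case where the latter is infinite.

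For item (2), rather than constructing a spanning set of $K\cap D$ by hand, I would route the bound through item (1) and Proposition~\ref{leao}. First I would record the inequality $span(n,\varepsilon,\sigma,K\cap D)\leq sep(n,\varepsilon,\sigma,K\cap D)$, proved exactly as the first inequality of Lemma~\ref{sirigordo} (an argument which does not invoke the triangle inequality): if $sep(n,\varepsilon,\sigma,K\cap D)$ is finite, a separated set $C$ of maximum cardinality is automatically spanning, because any $x\in(K\cap D)\setminus C$ that were $\varepsilon$-far from every point of $C$ would extend $C$ to a strictly larger separated set; if it is infinite the inequality is trivial. Then, using item (1) to rewrite $sep(n,\varepsilon,\sigma,K\cap D)=ssep(n,\varepsilon,\sigma,K)$ and the right-hand inequality of Proposition~\ref{leao} to pass to $ssep(n,\varepsilon,\sigma,K)\leq sspan(n,\frac{\varepsilon}{2},\sigma,K)$, I would chain
\[
span(n,\varepsilon,\sigma,K\cap D)\leq sep(n,\varepsilon,\sigma,K\cap D)=ssep(n,\varepsilon,\sigma,K)\leq sspan(n,\tfrac{\varepsilon}{2},\sigma,K),
\]
which is the desired inequality.

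The individual steps are all short; the only point requiring care is the bookkeeping with infinite cardinalities. The main conceptual obstacle — the reason item (1) is not completely trivial — is its reverse inequality: one must exhibit, for each separated configuration, a single closed subset of the \emph{open} set $D$ that contains it while still lying inside $D$, and the clean device is to take the finite configuration itself as $F$, exploiting that finite sets are closed. Everything else reduces to monotonicity of $sep$ and $span$ in the ambient set together with the already-established Lemma~\ref{sirigordo} and Proposition~\ref{leao}.
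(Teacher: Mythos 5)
Your proposal is correct. For Item (1) you follow essentially the same route as the paper: monotonicity of $sep$ in the ambient set gives one inequality, and the other is obtained by using finite (hence closed) separated configurations inside $Dom(\sigma^{n-1})$ as the sets $F$, with the same case analysis when $sep(n,\varepsilon,\sigma,K\cap Dom(\sigma^{n-1}))$ is infinite.

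For Item (2), however, your argument is genuinely different from the paper's. The paper proves the inequality by a direct construction: assuming $sspan(n,\frac{\varepsilon}{2},\sigma,K)<\infty$, it picks a closed $F\subseteq Dom(\sigma^{n-1})$ attaining that supremum (possible since the values are integers), takes a minimal $(n,\frac{\varepsilon}{2},\sigma,K\cap F)$-spanning set $B$, and shows by contradiction that $B$ already $(n,\varepsilon)$-spans $K\cap Dom(\sigma^{n-1})$: a point $y$ that is $\varepsilon$-far from $B$ would be $\frac{\varepsilon}{2}$-far from all of $K\cap F$ (this is where the triangle inequality on $Dom(\sigma^{n-1})$, Remark~\ref{dn restricted}, enters), so passing to $F_1=F\cup\{y\}$ would produce a spanning set of $K\cap F$ strictly smaller than $B$, a contradiction. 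You instead chain three soft facts: $span\leq sep$ on $K\cap Dom(\sigma^{n-1})$ (the maximal-separated-set-is-spanning argument, which, as you correctly observe, needs neither the triangle inequality nor closedness or compactness, only that a finite supremum of integer cardinalities is attained), then your Item (1), then the right-hand inequality of Proposition~\ref{leao}. This is shorter and sidesteps the two delicate points of the paper's proof (attainment of the supremum by some $F$, and the analysis of the minimal spanning set of $K\cap F_1$); it is also not circular, since Proposition~\ref{leao} rests only on Lemma~\ref{sirigordo}. What the paper's construction buys in exchange is slightly more information: it exhibits an explicit set, namely the minimal $(n,\frac{\varepsilon}{2})$-spanning set of the maximizing $F$, that $(n,\varepsilon)$-spans $K\cap Dom(\sigma^{n-1})$, whereas your argument only compares cardinalities. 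Both yield exactly the stated inequality.
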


\begin{proof}
We begin with the first item. Let $F\subseteq Dom(\sigma^{n-1})$ be closed in $X$ and $B$ an $(n, \varepsilon, \sigma, K\cap F)$-separated set. Then, $B$ is also an $(n, \varepsilon, \sigma, K\cap Dom(\sigma^{n-1}))$-separated set and therefore $sep(n,\varepsilon, \sigma, K\cap F)\leq sep(n,\varepsilon, \sigma, K\cap Dom(\sigma^{n-1}))$. Since this inequality holds for each closed set $F$ in $X$ with $F\subseteq Dom(\sigma^{n-1})$, we conclude that $ssep(n,\varepsilon, \sigma, K)\leq sep(n,\varepsilon, \sigma, K\cap Dom(\sigma^{n-1}))$.

It remains to prove the reverse inequality. Let $A\subseteq K\cap Dom(\sigma^{n-1})$ be an  $(n,\varepsilon,\sigma, K\cap Dom(\sigma^{n-1}))$-separated set of largest cardinality. If $A$ is a finite set, then $A$ is closed in $X$ 
and $sep(n,\varepsilon, \sigma, K\cap A)$ is equal to the cardinality of $A$. Since $A\subseteq Dom(\sigma^{n-1})$, we obtain that $sep(n,\varepsilon, \sigma, K\cap Dom(\sigma^{n-1}))\leq ssep(n,\varepsilon, \sigma, K)$. If $A$ is an infinite set then, for each finite subset $B$ of $A$, we have that $B$ is closed in $X$, $B\subseteq K\cap Dom(\sigma^{n-1})$, and $sep(n, \varepsilon, \sigma, K\cap B)$ is equal to the cardinality of $B$. Therefore, $ssep(n, \varepsilon, \sigma, K)$ is also infinite and hence $sep(n,\varepsilon, \sigma, K\cap Dom(\sigma^{n-1}))= ssep(n,\varepsilon, \sigma, K)$ as desired.

Next, we prove the second item. 

Clearly, the inequality holds if $sspan(n,\frac{\varepsilon}{2},\sigma, K)=\infty$. Suppose that $sspan(n,\frac{\varepsilon}{2},\sigma, K)<\infty$. Let  $F\subseteq Dom(\sigma^{n-1})$ be a closed set such that $span(n,\frac{\varepsilon}{2},\sigma, K\cap F)=sspan(n,\frac{\varepsilon}{2},\sigma, K)$ and $B$ an $(n,\frac{\varepsilon}{2},\sigma, K\cap F)$ spanning set of minimal cardinality. We show below that $B$ is an $(n,\varepsilon, \sigma,  K\cap Dom(\sigma^{n-1}))$-spanning set.

Suppose that there exists an $y\in K\cap Dom(\sigma^{n-1})$ such that $d_n(y,b)>\varepsilon$ for each $b\in B$. Then $y\notin F$, since $B$ is an $(n,\frac{\varepsilon}{2},\sigma, K\cap F)$ spanning set. Moreover, for each $x\in K\cap F$ we have that  $d_n(x,y)>\frac{\varepsilon}{2}$. Indeed, suppose that there exists $x\in K\cap F$ such that $d_n(x,y)\leq \frac{\varepsilon}{2}$. Choose $b\in B$ such that $d_n(x,b)\leq \frac{\varepsilon}{2}$. Then, $d_n(y,b)\leq d_n(y,x)+d_n(x,b)\leq \varepsilon$, which is impossible, since $d_n(y,b)>\varepsilon$ for each $b\in B$. Now, define $F_1=F\cup \{y\}$, which is a closed set contained in $Dom(\sigma^{n-1})$. Let $C$ be an $(n,\frac{\varepsilon}{2}, \sigma, K\cap F_1)$-spanning set of minimal cardinality. Notice that $C=C_1\cup \{y\}$, where $C_1\subseteq K\cap F$ (since $d_n(x,y)>\frac{\varepsilon}{2}$ for each $x\in K\cap F$). Moreover, $C_1$ is an $(n,\frac{\varepsilon}{2},\sigma, K\cap F)$-spanning set.
Recall that $$span(n, \frac{\varepsilon}{2}, \sigma, K\cap F)=sspan(n, \frac{\varepsilon}{2}, \sigma, K)=$$ $$=sup\{span(n,\frac{\varepsilon}{2},\sigma, K\cap Y):Y\subseteq Dom(\sigma^{n-1}) \text{ is closed in }X\}.$$ Since $C$ is among the sets in which the supremum above is taken, we have that the cardinality of $C$ is less or equal to the cardinality of $B$, and since $C=C_1\cup\{y\}$ we conclude that the cardinality of $C_1$ is less than the cardinality of $B$. But this is a contradiction, since $B$ is an $(n,\frac{\varepsilon}{2},\sigma, K\cap F)$ spanning set of minimal cardinality.

We have proved above that $B$ is an $(n,\varepsilon, \sigma,  K\cap Dom(\sigma^{n-1}))$-spanning set. Hence, $$span(n,\varepsilon, \sigma, K\cap Dom(\sigma^{n-1}))\leq sspan (n,\frac{\varepsilon}{2}, \sigma, K),$$ as desired.

\end{proof}

For compact open sets, another possibility to define spanning and separating sets is to intersect with the closure of the domain of $\sigma^{n-1}$. 
This will be useful when we relate the entropy of this section with the entropy defined via covers. The details are below.

\begin{proposition}\label{compactopenprop} Let $(X,\sigma)$ be a Deaconu-Renault system and $K\subseteq X$ be a compact open set. Then, for each $\varepsilon>0$ and $n\in\N$, the following inequalities are true.
\begin{enumerate}
    \item $sep(n,\varepsilon, \sigma, K\cap Dom(\sigma^{n-1}))\leq sep(n,\varepsilon, \sigma, K\cap \overline{Dom(\sigma^{n-1})})\leq sep(n,\frac{\varepsilon}{2}, \sigma, K\cap Dom(\sigma^{n-1})).$
    \item $span(n,\varepsilon, \sigma, K\cap \overline{Dom(\sigma^{n-1})})\leq span(n,\varepsilon, \sigma, K\cap Dom(\sigma^{n-1})).$
\end{enumerate}
 
\end{proposition}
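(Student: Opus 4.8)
The plan is to abbreviate $D := Dom(\sigma^{n-1})$ and to exploit three structural facts: $D$ is open, the domains are nested so that $D \subseteq Dom(\sigma^{i})$ for every $0 \le i \le n-1$ (equivalently, $I_n(x)$ is an initial segment), and $D$ is dense in its closure $\overline{D} = \overline{Dom(\sigma^{n-1})}$. The first inequality in Item~1 is then immediate from monotonicity: since $K \cap D \subseteq K \cap \overline{D}$, every $(n,\varepsilon,\sigma,K\cap D)$-separated set is verbatim an $(n,\varepsilon,\sigma,K\cap\overline{D})$-separated set, so the largest attainable cardinality can only increase when passing from $K\cap D$ to $K\cap\overline{D}$.

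For the second inequality of Item~1, I would push a separated set from $K\cap\overline D$ into $K\cap D$ at the cost of halving $\varepsilon$. Start with an $(n,\varepsilon,\sigma,K\cap\overline D)$-separated set $A$. For each $x \in A$, use density of $D$ together with continuity of the finitely many iterates $\sigma^i$, $i\in I_n(x)$ (each defined on an open set containing $x$, and automatically defined at every point of $D$), to pick $x' \in K\cap D$ (here $K$ open guarantees $x'\in K$) with $d_n(x,x') < \varepsilon/4$; since $x'\in D$ gives $I_n(x')=\{0,\dots,n-1\}\supseteq I_n(x)$, this distance is genuinely $\max_{i\in I_n(x)} d(\sigma^i(x),\sigma^i(x'))$, which continuity lets me make small. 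I then claim $x\mapsto x'$ carries $A$ onto an $(n,\varepsilon/2,\sigma,K\cap D)$-separated set: for $x\neq y$ in $A$, pick the index $i$ realizing $d_n(x,y)>\varepsilon$; this $i$ lies in $I_n(x)\cap I_n(y)$, hence in $I_n(x')\cap I_n(y')$, and a triangle inequality for the genuine metric $d$ at this single coordinate gives $d(\sigma^i(x'),\sigma^i(y')) \ge d(\sigma^i(x),\sigma^i(y)) - d(\sigma^i(x),\sigma^i(x')) - d(\sigma^i(y),\sigma^i(y')) > \varepsilon - \varepsilon/4 - \varepsilon/4 = \varepsilon/2$, so $d_n(x',y')>\varepsilon/2$. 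This also forces $x\mapsto x'$ to be injective, whence $|A| \le sep(n,\varepsilon/2,\sigma,K\cap D)$; running this over all such $A$ (exhausting by finite subsets in the infinite case) yields the inequality.

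For Item~2, I would show that a minimal $(n,\varepsilon,\sigma,K\cap D)$-spanning set already spans $K\cap\overline D$. If $span(n,\varepsilon,\sigma,K\cap D)=\infty$ the inequality is trivial, so assume it is finite and fix a spanning set $B\subseteq K\cap D$ of minimal (hence finite) cardinality; as $B\subseteq K\cap\overline D$, it suffices to verify the spanning condition at each $x\in (K\cap\overline D)\setminus D$. Choose $x_m\in K\cap D$ with $x_m\to x$; each $x_m$ is $\varepsilon$-spanned by some $y_m\in B$, and finiteness of $B$ forces some fixed $y\in B$ to occur for infinitely many $m$. Along that subsequence, for every $i\in I_n(x)$ we have $i\in I_n(x_m)\cap I_n(y)$, so $d(\sigma^i(x_m),\sigma^i(y))\le \varepsilon$, and continuity gives $d(\sigma^i(x),\sigma^i(y)) = \lim_m d(\sigma^i(x_m),\sigma^i(y)) \le \varepsilon$; maximizing over the finitely many $i\in I_n(x)$ gives $d_n(x,y)\le\varepsilon$, so $B$ spans $x$.

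The main obstacle throughout is that $d_n$ fails the triangle inequality off $Dom(\sigma^{n-1})$ (Examples~\ref{contraexemplo} and~\ref{siri}), so I cannot treat $d_n$ as a metric on $\overline{D}$ and cannot chain estimates through it directly. The resolution, common to both items, is to descend to a single well-chosen index $i$ — the coordinate witnessing separation in Item~1, and each coordinate of the finite set $I_n(x)$ in Item~2 — and argue there with the honest metric $d$ and the continuity of the individual maps $\sigma^i$; it is precisely the finiteness of $I_n(x)$, and (for Item~2) of the minimal spanning set $B$, that keeps the approximation and limiting steps under control.
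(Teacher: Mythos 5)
Your proposal is correct and follows essentially the same route as the paper's proof: monotonicity for the first inequality, perturbation of separated points into $Dom(\sigma^{n-1})$ via density and dynamical balls with an $\varepsilon/4$ margin plus a single-coordinate triangle inequality for the second, and a minimal finite spanning set with a pigeonhole/limit argument for Item~2. The only (cosmetic) difference is in Item~2, where you approximate $x$ merely in the metric $d$ and pass to the limit using continuity of each $\sigma^i$, while the paper approximates in the dynamical distance $d_n$ and chains coordinate-wise estimates; both rest on the same facts.
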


\begin{proof} The first inequality in Item~1. is straightforward, since each $(n,\varepsilon, \sigma, K\cap Dom(\sigma^{n-1}))$-separated set is also $(n,\varepsilon, \sigma, K\cap\overline{Dom(\sigma^{n-1})})$-separated.

 For the second inequality in Item~1., let $A\subseteq K\cap \overline{Dom(\sigma^{n-1})}$ be an  $(n,\varepsilon,\sigma, K\cap \overline{Dom(\sigma^{n-1})})$-separated set of largest cardinality. For each $a\in A$, the open set $ U(a,n,\frac{\varepsilon}{4})\cap K$ is a neighborhood of $a$ (where $ U(a,n,\frac{\varepsilon}{4})$ is a dynamic ball at $a$, see Remark~\ref{dynamicball}) and hence there exists $x_a \in Dom(\sigma^{n-1})\cap U(a,n,\frac{\varepsilon}{4})\cap K$.  Define $$B=\{x_a:a\in A\}.$$ 
 Recall that $d_n(a,x_a)<\frac{\varepsilon}{4}$ for all $a\in A$.
 We show that $d_n(x_a,x_b)> \frac{\varepsilon}{2}$ for each $x_a,x_b \in B$: let $a, b\in A$ and let $i\in I_n(a)\cap I_n(b)$ be such that $d(\sigma^i(a),\sigma^i(b))>\varepsilon$. Since $d_n(a,x_a)< \frac{\varepsilon}{4}$, we have that  $d(\sigma^i(a),\sigma^i(x_a))< \frac{\varepsilon}{4}$, and similarly $d(\sigma^i(b),\sigma^i(x_b))< \frac{\varepsilon}{4}$.
Then,
$$\varepsilon< d(\sigma^i(a),\sigma^i(b))\leq d(\sigma^i(a),\sigma^i(x_a))+d(\sigma^i(x_a),\sigma^i(x_b))+d(\sigma^i(x_b),\sigma^i(b))< $$
$$< d(\sigma^i(x_a),\sigma^i(x_b))+\frac{\varepsilon}{2}.$$
Therefore, $d(\sigma^i(x_a),\sigma^i(x_b))> \frac{\varepsilon}{2}$ and hence $d_n(x_a,x_b)> \frac{\varepsilon}{2}$.
Since $B\subseteq K\cap Dom(\sigma^{n-1})$ and the cardinality of $B$ is equal to the cardinality of $A$, we conclude that $sep(n,\varepsilon, \sigma, K\cap \overline{Dom(\sigma^{n-1})})\leq sep(n,\frac{\varepsilon}{2}, \sigma, K\cap Dom(\sigma^{n-1}))$, as desired.

Next, we prove Item 2. Clearly, the inequality holds if $span(n,\varepsilon,\sigma, K\cap Dom(\sigma^{n-1}))=\infty$. Suppose that $span(n,\varepsilon,\sigma, K\cap Dom(\sigma^{n-1}))<\infty$. Let $B$ be an $(n,\varepsilon,\sigma, K\cap Dom(\sigma^{n-1}))$-spanning set of minimal cardinality. We show that $B$ is also an  $(n,\varepsilon,\sigma, K\cap \overline{Dom(\sigma^{n-1})})$-spanning set: let $x\in K\cap \overline{Dom(\sigma^{n-1})}$. For each $m\in \N$, let $x_m\in K\cap Dom(\sigma^n)$ be such that $d_n(x,x_m)\leq \frac{1}{m}$ and  $b_m\in B$ be such that $d_n(x_m,b_m)\leq \varepsilon$. Since $B$ is a finite set, there exists $b \in B$ such that $b_m=b$ for infinitely many $m\in \N$. Hence, there is a subsequence $(m_j)_{j\in \N}$ such that $b_{m_j}=b$ for each $j\in \N$. 
Let $i\in I_n(x)\cap I_n(b)$ be such that $d_n(x,b)=d(\sigma^i(x),\sigma^i(b))$. Then, for each $j\in \N$, we have that 
$$d_n(x,b)=d(\sigma^i(x),\sigma^i(b))\leq d(\sigma^i(x),\sigma^i(x_{m_j}))+d(\sigma^i(x_{m_j}),\sigma^i(b))\leq $$
$$\leq d_n(x,x_{m_j})+d_n(x_{m_j},b_{m_j})\leq \frac{1}{m_j}+\varepsilon.$$
Therefore, $d_n(x,b)\leq \varepsilon$. This shows that $B$ is an $(n,\varepsilon,\sigma, K\cap \overline{Dom(\sigma^{n-1})})$-spanning set, and so $$span(n,\varepsilon, \sigma, K\cap \overline{Dom(\sigma^{n-1})}) \leq span(n,\varepsilon, \sigma, K\cap Dom(\sigma^{n-1})).$$
\end{proof}

We now define the metric entropy of a Deaconu-Renault system, in a way that it coincides with the usual definition of metric entropy when the domain of the homeomorphism $\sigma$ is the whole space $X$.

\begin{definition}\label{defentropy}Let $(X,\sigma)$ be a Deaconu-Renault system on a metric space $(X,d)$.
\begin{enumerate}
    
\item For each compact set $K\subseteq X$ and $\varepsilon>0$ define
$$
h_{\varepsilon}(\sigma,K,d)=\limsup_{n\to\infty}\frac{1}{n}\log \mbox{ssep}(n,\varepsilon,\sigma,K).$$
\item For each compact set $K\subseteq X$ define
$$
h_{d}(\sigma,K)=\lim_{\varepsilon\to0}h_{\varepsilon}(\sigma,K,d).$$
\item 
Define the metric entropy of the Deaconu-Renault system $(X,\sigma)$ as

\begin{equation}
h_d(\sigma)=\sup_{K\subseteq X}h_{d}(\sigma,K)
\end{equation}
where the supremum is taken over all the compact sets $K\subseteq X$.

\end{enumerate}
\end{definition}

\begin{remark} Recall that the limsup is well-defined for sequences of extended reals (where the extended reals $\overline{\R}=\R \cup \{-\infty,\infty\}$ are equipped with the usual order $-\infty<r<\infty$ for all $r\in \mathbb R$ and the order topology).  
In Item 1. above, there may exist some $n$ such that $ssep (n,\varepsilon, \sigma, K)=\infty$, in which case we set $\frac{1}{n}\log \mbox{ssep}(n,\varepsilon,\sigma,K)$ as $\infty$ and consider the limsup of a sequence of extended reals.

In Item~2. above, the quantity $\mbox{ssep}(n,\varepsilon,\sigma,K)$ increases monotonically as $\varepsilon$ decreases, and so $h_{\varepsilon}(\sigma,K,d)$ does as well. Therefore, the limit as $\varepsilon\rightarrow 0$  does exist. Furthermore, the inequalities in Proposition~\ref{leao}
imply that equivalent definitions of $h_d(\sigma)$ can be obtained
 if we replace $\mbox{ssep}(n, \varepsilon, \alpha,K)$ by
 $\mbox{sspan}(n,\varepsilon,\alpha,K)$ in the first item of Definition \ref{defentropy}.

\end{remark}

\begin{remark} As a consequence of the first item  of Proposition \ref{propnovossepspan}, we can use $sep(n, \varepsilon, \sigma, K\cap Dom(\sigma^{n-1}))$, instead of $ssep(n, \varepsilon, \sigma, K)$ in the definition of entropy above. Moreover, if $(X,\sigma)$ is a Deaconu-Renault system with a basis of compact open sets then, by Proposition~\ref{compactopenprop} and Proposition~\ref{propnovossepspan}, we can replace $ssep(n,\varepsilon,\sigma, K)$ with $sep(n, \varepsilon, \sigma, K\cap \overline{Dom(\sigma^{n-1})})$ in the first item of Definition~\ref{defentropy} and, to obtain the metric entropy $h_d(\sigma)$ we can take supremum over all the compact open sets of $X$, instead of the supremum over all the compact sets (this follows since if $K\subseteq L$ are two compact sets in $X$ then $h_d(\sigma, K)\leq h_d(\sigma, L)$ and any compact set in $X$ is contained in a compact open set).
\end{remark}

The following hypothesis is satisfied by a large number of Deaconu-Renault systems and is sufficient to guarantee that the sequence $(ssep(n,\varepsilon, \sigma, K))_n$ is monotone (for compact open sets $K$).

\begin{hypothesis}\label{niceone} Let $(X,\sigma)$ a Deaconu-Renault system. If $x\in X$ and $n\in \N$ are such that $I_n(x)=\{0,\ldots ,n-1\}$ then, for each open neighborhood $U$ of $x$, there exists $y\in U$ such that $I_{n+1}(y)=\{0,\ldots ,n\}$. 
\end{hypothesis}

\begin{remark} A Deaconu-Renault system $(X,\sigma)$ satisfies Hypotheses \ref{niceone} if, and only if, for each $n\in \N$, $x\in Dom(\sigma^{n-1})$, and open neighborhood $U$ of $x$, there exists $y\in U$ such that $y\in Dom(\sigma^n)$. This condition happens, for example, if each $Dom(\sigma^n)$ is dense in $X$. 
\end{remark}

\begin{proposition}
 Let $(X,\sigma)$ be a Deaconu-Renault system satisfying Hypothesis~\ref{niceone} and let $K$ be a compact open subset of $X$. Then, $ssep(n,\varepsilon, \sigma, K)\leq ssep(n+1,\varepsilon, \sigma, K)$.
\end{proposition}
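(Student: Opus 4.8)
The plan is to first replace the suprema over closed subsets by the cleaner expression supplied by Proposition~\ref{propnovossepspan}(1), which gives $ssep(n,\varepsilon,\sigma,K)=sep(n,\varepsilon,\sigma,K\cap Dom(\sigma^{n-1}))$ and, at the next level, $ssep(n+1,\varepsilon,\sigma,K)=sep(n+1,\varepsilon,\sigma,K\cap Dom(\sigma^{n}))$. Thus it is enough to show $sep(n,\varepsilon,\sigma,K\cap Dom(\sigma^{n-1}))\le sep(n+1,\varepsilon,\sigma,K\cap Dom(\sigma^{n}))$. Since these quantities are the largest cardinalities of separated sets, it suffices to produce, for every \emph{finite} $(n,\varepsilon,\sigma,K\cap Dom(\sigma^{n-1}))$-separated set $A$, an $(n+1,\varepsilon,\sigma,K\cap Dom(\sigma^{n}))$-separated set of the same cardinality: then $sep(n+1,\dots)\ge|A|$ for each such $A$, and since every separated set contains finite separated subsets of each smaller cardinality, taking the supremum over finite $A$ recovers $sep(n,\dots)$, giving the inequality even when the values are infinite.

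Next I would fix a finite separated set $A=\{x_1,\dots,x_m\}\subseteq K\cap Dom(\sigma^{n-1})$, so $d_n(x_j,x_k)>\varepsilon$ for $j\ne k$. For each pair $j\ne k$, Lemma~\ref{lema d_n} provides open neighborhoods of $x_j$ and of $x_k$ on which $d_n$ stays $>\varepsilon$; intersecting, for each fixed $j$, the finitely many such neighborhoods together with the open set $K$, I obtain an open neighborhood $V_j$ of $x_j$ (here I use that $K$ is open and $x_j\in K$) with $V_j\subseteq K$ and $d_n(z,w)>\varepsilon$ whenever $z\in V_j$, $w\in V_k$, $j\ne k$. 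Because $x_j\in Dom(\sigma^{n-1})$ we have $I_n(x_j)=\{0,\dots,n-1\}$, so Hypothesis~\ref{niceone} yields a point $y_j\in V_j$ with $I_{n+1}(y_j)=\{0,\dots,n\}$, that is $y_j\in Dom(\sigma^{n})$; moreover $y_j\in V_j\subseteq K$, hence $y_j\in K\cap Dom(\sigma^{n})$.

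It then remains to check that $B=\{y_1,\dots,y_m\}$ is $(n+1,\varepsilon)$-separated. For $j\ne k$ the choice of the $V_j$ gives $d_n(y_j,y_k)>\varepsilon$. Since $y_j,y_k\in Dom(\sigma^{n})$ and the domains are nested downward, $I_{n+1}(y_j)=I_{n+1}(y_k)=\{0,\dots,n\}$ and $I_n(y_j)=I_n(y_k)=\{0,\dots,n-1\}$; as the index set over which $d_{n+1}$ maximizes contains that of $d_n$,
\[ d_{n+1}(y_j,y_k)=\max_{0\le i\le n}d(\sigma^i(y_j),\sigma^i(y_k))\ge \max_{0\le i\le n-1}d(\sigma^i(y_j),\sigma^i(y_k))=d_n(y_j,y_k)>\varepsilon. \]
In particular the $y_j$ are pairwise distinct, so $|B|=m$ and $B$ is an $(n+1,\varepsilon,\sigma,K\cap Dom(\sigma^{n}))$-separated set of cardinality $m$, completing the reduction and hence the proof.

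The conceptual heart of the argument is the perturbation step: a level-$n$ separated set may live entirely in $Dom(\sigma^{n-1})\setminus Dom(\sigma^{n})$, so it cannot simply be reused at level $n+1$. Combining Lemma~\ref{lema d_n} (separation is an open condition) with Hypothesis~\ref{niceone} (points admitting one further iterate occur near any point of $Dom(\sigma^{n-1})$) is exactly what lets me nudge each point into $Dom(\sigma^{n})$ without destroying separation, while openness of $K$ keeps the nudged points inside $K$; this is why the statement is restricted to compact \emph{open} $K$. The only other delicate point is the passage from finite separated sets to the suprema defining $sep$, which the cardinality-matching remark in the first step handles so that the inequality persists even when $ssep(n,\varepsilon,\sigma,K)=\infty$.
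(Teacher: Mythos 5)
Your proof is correct and follows essentially the same route as the paper's: both hinge on combining Lemma~\ref{lema d_n} (separation is an open condition), the openness of $K$, and Hypothesis~\ref{niceone} to perturb each point of a level-$n$ separated set into $Dom(\sigma^{n})$ without destroying separation, then noting $d_{n+1}\geq d_n$ on the perturbed points. The only difference is bookkeeping: the paper works with an arbitrary closed $F\subseteq Dom(\sigma^{n-1})$, where maximal separated sets are automatically finite by Lemma~\ref{sirigordo}, and takes the supremum over $F$ at the end, whereas you invoke Proposition~\ref{propnovossepspan} up front and handle possible infinitude by passing to finite separated subsets — both are valid.
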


\begin{proof}
Let $F\subseteq Dom(\sigma^{n-1})$ be a closed set and $S$ be an $(n,\varepsilon,\sigma, K\cap F)$-separated set of maximum cardinality. Notice that $I_n(s)=\{0,\ldots , n-1\}$, for each $s\in S$ (since $S\subseteq Dom(\sigma^{n-1})$). By the definition of an $(n,\varepsilon,\sigma, K\cap F)$-separated set, for each $x,y\in S$ there exists $0\leq j \leq n-1$, such that $d(\sigma^j(x), \sigma^j(y))>\varepsilon$. Furthermore, by Lemma~\ref{sirigordo}, $S$ is a finite set, and so we write $S=\{s_1,\ldots  s_k\}$. 

We now use Lemma~\ref{lema d_n}: for each $i\in \{1,\ldots ,k\}$, there exists open sets $V_i\subseteq K$ (since $K$ is compact-open) such that $s_i\in V_i$ and, moreover, for $z_i\in V_i$ and $z_j\in V_j$, we have that $d_n(z_i,z_j)>\varepsilon$ for each $i,j\in \{1,\ldots ,k\}$ with $i\neq j$. For each $i\in \{1,\ldots ,k\}$, choose $w_i\in V_i$ such that $I_{n+1}(w_i)=\{0,\ldots ,n\}$. Such $w_i$ exists since $(X,\sigma)$ satisfies Hypothesis \ref{niceone}. Now, notice that $\{w_1,..,w_k\}$ is an $(n+1,\varepsilon, \sigma, K\cap \{w_1,\ldots ,w_k\})$ separated set and, therefore, 
  $$sep(n,\varepsilon, \sigma, K\cap F) \leq sep(n+1,\varepsilon, \sigma, K\cap \{w_1,\ldots ,w_k\}).$$

Since $sep(n+1,\varepsilon, \sigma, K\cap \{w_1,\ldots ,w_k\})\leq ssep(n+1,\varepsilon, \sigma, K)$, we have that $sep(n,\varepsilon, \sigma, K\cap F) \leq ssep(n+1,\varepsilon, \sigma, K)$ for each closed set $F\subseteq Dom(\sigma^{n-1})$ and hence
$$ssep(n,\varepsilon, \sigma, K) \leq ssep(n+1,\varepsilon, \sigma, K).$$
\end{proof}

Invariance by conjugacy is one of entropy's most important features. In the remainder of the section, we describe this property for Deaconu-Renault systems. Since we are dealing with metric entropy, we first show how equivalence of metrics behaves in terms of entropy.

\begin{definition}
Let $d_1$ and $d_2$ be two metrics on $X$. 
We say that $d_1$ and $d_2$ are continuously equivalent if $Id_X:(X,d_1)\to (X,d_2)$ and $Id_X:(X,d_2)\to (X,d_1)$ are continuous. 
We say that $d_2$ is uniformly continuous with respect to $d_1$ if $Id_X:(X,d_1)\rightarrow (X,d_2)$ is uniformly continuous.
If both maps are uniformly continuous, then we say that the metrics are uniformly equivalent.
\end{definition}

\begin{remark}
If $d_1$ and $d_2$ are continuously equivalent, then a map $f:(X,d_1)\to (X,d_1)$ is continuous if, and only if, $f:(X,d_2)\to (X,d_2)$ is continuous. Therefore, if $((X,d_1),\sigma)$ is a Deaconu-Renault system and $d_1$ is continuously equivalent to $d_2$, then $((X,d_2),\sigma)$ is also a Deaconu-Renault system. Nevertheless, entropy does not need to be preserved by continuously equivalent metrics, see \cite[Remark~15, page 171]{Walters}.
\end{remark}

As it happens for the entropy of partial actions of $\Z$ (see \cite{Entropyforhomeo}), the entropy of Deaconu-Renault systems is preserved under uniformly equivalent metrics. We make this precise below. 

 \begin{proposition}
\label{invariance by the metric} Let $(X,\sigma)$ be a Deaconu-Renault system on the metric space $(X,d)$, and let $d'$ be a metric on $X$ such that $d$ and $d'$ are uniformly equivalent metrics on 
 $X$. Then $h_{d}(\sigma)=h_{d'}(\sigma)$.
 \end{proposition}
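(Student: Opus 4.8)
The plan is to compare the quantities $ssep(n,\varepsilon,\sigma,K)$ computed with respect to $d$ and with respect to $d'$, and then to push the comparison through the two limits and the supremum that define $h_d(\sigma)$. First I would observe that uniform equivalence implies continuous equivalence, so $d$ and $d'$ induce the same topology on $X$; in particular the compact sets $K$, the open sets $Dom(\sigma^i)$, the closed sets $F\subseteq Dom(\sigma^{n-1})$, and the maps $\sigma^i$ together with the index sets $I_n(x)$ all coincide for the two metrics. Thus only the numerical value of the separation changes, and it suffices to prove $h_{d'}(\sigma,K)\le h_d(\sigma,K)$ for every compact $K$; the reverse inequality then follows by exchanging the roles of $d$ and $d'$, and taking the supremum over compact sets yields the claim.

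Writing $d_n$ and $d'_n$ for the maps of the two metrics, the key step is the following translation of uniform continuity. Given $\delta>0$, uniform continuity of $Id_X\colon(X,d)\to(X,d')$ furnishes $\varepsilon>0$ with $d(a,b)\le\varepsilon\Rightarrow d'(a,b)\le\delta$ for all $a,b\in X$. Since $I_n$ does not depend on the metric, if $d'_n(x,y)>\delta$ then $d'(\sigma^i(x),\sigma^i(y))>\delta$ for some $i\in I_n(x)\cap I_n(y)$, whence $d(\sigma^i(x),\sigma^i(y))>\varepsilon$ and therefore $d_n(x,y)>\varepsilon$. Consequently every $(n,\delta,\sigma,K\cap F)$-separated set for $d'$ is an $(n,\varepsilon,\sigma,K\cap F)$-separated set for $d$, which gives $sep_{d'}(n,\delta,\sigma,K\cap F)\le sep_d(n,\varepsilon,\sigma,K\cap F)$; taking the supremum over the closed sets $F\subseteq Dom(\sigma^{n-1})$ (the same family for both metrics) yields $ssep_{d'}(n,\delta,\sigma,K)\le ssep_d(n,\varepsilon,\sigma,K)$.

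Dividing by $n$, taking logarithms, and passing to the $\limsup$ then gives $h_\delta(\sigma,K,d')\le h_\varepsilon(\sigma,K,d)$. The point to watch --- and what I expect to be the only real subtlety --- is the order of the limits: there is no control on how $\varepsilon=\varepsilon(\delta)$ behaves as $\delta\to0$, so one should not attempt to let $\varepsilon\to0$ inside this inequality. Instead I would use that $h_\varepsilon(\sigma,K,d)$ increases as $\varepsilon$ decreases, so that $h_\varepsilon(\sigma,K,d)\le \lim_{\varepsilon\to0}h_\varepsilon(\sigma,K,d)=h_d(\sigma,K)$ for every $\varepsilon>0$; combined with the previous inequality this gives $h_\delta(\sigma,K,d')\le h_d(\sigma,K)$ for all $\delta>0$. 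Letting $\delta\to0$ produces $h_{d'}(\sigma,K)\le h_d(\sigma,K)$, and by symmetry together with the supremum over $K$ we conclude $h_d(\sigma)=h_{d'}(\sigma)$.
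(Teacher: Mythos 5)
Your proof is correct, and it is essentially the argument the paper invokes: the paper's own proof is a one-line deferral to \cite[Lemma~3.15]{Entropyforhomeo}, whose content is exactly your translation of separated sets via one direction of uniform continuity, with the other direction (and the resulting equality of topologies, hence of the families of compact sets $K$ and closed sets $F$) guaranteeing that all the suprema range over the same collections. You also handle the one genuine subtlety correctly: bounding $h_{\varepsilon}(\sigma,K,d)$ by $h_d(\sigma,K)$ via monotonicity before letting $\delta\to 0$, rather than trying to control $\varepsilon(\delta)$.
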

\begin{proof}

The proof is the same as in \cite[Lemma~3.15]{Entropyforhomeo}.
\end{proof}

Below we provide an example that shows that uniform equivalence of just one metric with respect to the other is not enough for invariance of the entropy.

\begin{example}\label{ex1} Consider the metric space $(\R,d_1)$ with the discrete metric $d_1$, and the metric space $(\R,d_2)$ with the usual metric $d_2$. The map $Id_X:(\R,d_{1})\to(\R,d_{2})$ is uniformly continuous, but $Id_X:(\R,d_{2})\to(\R,d_{1})$ is not. Let $\sigma:\R \to \R$ be the homeomorphism $\sigma(x)=2x$. Then, $h_{d_2}(\sigma)= \log(2)$ and $h_{d_1}(\sigma)=0$ (since each compact set $K$ in $(\R,d_1)$ is finite).
\end{example}

To finish this section, we prove that entropy is an invariant for conjugacies of Deaconu-Renault systems that are uniformly continuous, with inverses that are also uniformly continuous.

\begin{theorem}
Let $(X,\sigma_X)$ and $(Y,\sigma_Y)$ be Deaconu-Renault systems, where $(X,d)$ and $(Y,\ro)$ are metric spaces. Let $ \phi:X\rightarrow Y$ be a conjugacy between $(X,\sigma_X)$ and $(Y,\sigma_Y)$ such that $\phi$ and $\phi^{-1}$ are uniformly continuous. Then $h_d(\sigma_X)=h_{\ro}(\sigma_Y)$. 
\end{theorem}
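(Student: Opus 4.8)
The plan is to prove the single inequality $h_d(\sigma_X)\le h_{\ro}(\sigma_Y)$ and then obtain the reverse one by symmetry, applying the very same argument to the conjugacy $\phi^{-1}$ (which is again a conjugacy by \cref{altconjug}, and is uniformly continuous with uniformly continuous inverse $\phi$). The engine of the argument is the standard principle that a conjugacy carries separated sets to separated sets, with only a controlled change of the scale parameter dictated by uniform continuity. First I would record two transfer facts coming from \cref{lemaconjuga}. Since $\phi$ is a bijection with $\phi(Dom(\sigma_X^i))=Dom(\sigma_Y^i)$ for every $i$, we get $x\in Dom(\sigma_X^i)\Leftrightarrow \phi(x)\in Dom(\sigma_Y^i)$, hence $I_n(x)=I_n(\phi(x))$ for all $x\in X$ and $n\in\N$; in particular $I_n(x)\cap I_n(y)=I_n(\phi(x))\cap I_n(\phi(y))$. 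Using $\sigma_Y^i\circ\phi=\phi\circ\sigma_X^i$ on $Dom(\sigma_X^i)$, this yields the identity
$$\ro_n(\phi(x),\phi(y))=\max_{i\in I_n(x)\cap I_n(y)}\ro\big(\phi(\sigma_X^i(x)),\phi(\sigma_X^i(y))\big).$$

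Next, fix $\varepsilon>0$ and $n\in\N$. By the first item of \cref{propnovossepspan} I would replace $ssep$ by the separated number on the domain: $ssep(n,\varepsilon,\sigma_X,K)=sep(n,\varepsilon,\sigma_X,K\cap Dom(\sigma_X^{n-1}))$. Let $A\subseteq K\cap Dom(\sigma_X^{n-1})$ be an $(n,\varepsilon,\sigma_X,K\cap Dom(\sigma_X^{n-1}))$-separated set. By uniform continuity of $\phi^{-1}$, choose $\delta=\delta(\varepsilon)>0$ such that $\ro(p,q)<\delta\Rightarrow d(\phi^{-1}(p),\phi^{-1}(q))<\varepsilon$. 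If $x,y\in A$ are distinct, then $d_n(x,y)>\varepsilon$ gives some $i\in I_n(x)\cap I_n(y)$ with $d(\sigma_X^i(x),\sigma_X^i(y))>\varepsilon$, and the contrapositive of the choice of $\delta$ forces $\ro(\phi(\sigma_X^i(x)),\phi(\sigma_X^i(y)))\ge\delta$; by the displayed identity, $\ro_n(\phi(x),\phi(y))\ge\delta>\delta/2$. Since $\phi$ is injective and $\phi(A)\subseteq\phi(K)\cap Dom(\sigma_Y^{n-1})$ (again \cref{lemaconjuga}), the set $\phi(A)$ is $(n,\tfrac{\delta}{2},\sigma_Y,\phi(K)\cap Dom(\sigma_Y^{n-1}))$-separated with the same cardinality as $A$. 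Taking suprema over such $A$ and using the first item of \cref{propnovossepspan} on the $Y$-side gives
$$ssep(n,\varepsilon,\sigma_X,K)\le ssep(n,\tfrac{\delta}{2},\sigma_Y,\phi(K)).$$

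Applying $\tfrac1n\log(\cdot)$ and $\limsup_n$ produces $h_\varepsilon(\sigma_X,K,d)\le h_{\delta/2}(\sigma_Y,\phi(K),\ro)$. The one delicate point is the passage $\varepsilon\to0$: there is no reason for $\delta(\varepsilon)$ to tend to $0$, so the two limits cannot be matched scale-by-scale. This is resolved by recalling, from the remark following \cref{defentropy}, that $\varepsilon\mapsto h_\varepsilon$ is monotone, so $h_\ro(\sigma_Y,\phi(K))=\sup_{\eta>0}h_\eta(\sigma_Y,\phi(K),\ro)$; hence $h_\varepsilon(\sigma_X,K,d)\le h_{\delta/2}(\sigma_Y,\phi(K),\ro)\le h_\ro(\sigma_Y,\phi(K))$ for every $\varepsilon>0$, and taking the supremum over $\varepsilon$ yields $h_d(\sigma_X,K)\le h_\ro(\sigma_Y,\phi(K))$. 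Finally, as $\phi$ is a homeomorphism, $K\mapsto\phi(K)$ is a bijection from the compact subsets of $X$ onto those of $Y$, so taking the supremum over all compact $K\subseteq X$ gives $h_d(\sigma_X)\le h_\ro(\sigma_Y)$; the symmetric argument applied to $\phi^{-1}$ gives the reverse inequality and hence equality.

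I expect the main obstacle to be precisely this $\varepsilon$--$\delta$ bookkeeping at the limit stage, together with the verification that $\phi$ respects all the objects entering the definition of $ssep$ (the domains $Dom(\sigma^{n-1})$, compactness of $\phi(K)$, and closedness of images), which is where \cref{lemaconjuga} and the homeomorphism property are essential. The separated-set transfer itself is routine once the identity for $\ro_n\circ\phi$ is established.
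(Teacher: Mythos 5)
Your proposal is correct, but it takes a different route from the paper. The paper's proof factors the problem through an auxiliary metric: it defines the pushforward metric $d'(y_1,y_2)=d(\phi^{-1}(y_1),\phi^{-1}(y_2))$ on $Y$, so that $\phi:(X,d)\to(Y,d')$ becomes an \emph{isometric} conjugacy; the hypothesis that $\phi$ and $\phi^{-1}$ are uniformly continuous then says precisely that $d'$ and $\ro$ are uniformly equivalent, so Proposition~\ref{invariance by the metric} gives $h_{d'}(\sigma_Y)=h_{\ro}(\sigma_Y)$, while the isometry (together with $I_n(x)=I_n(\phi(x))$ from Lemma~\ref{lemaconjuga}) yields $d'_n(\phi(x),\phi(y))=d_n(x,y)$ and hence $h_d(\sigma_X)=h_{d'}(\sigma_Y)$ with no change of scale at all. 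You instead transfer separated sets directly from $(X,d)$ to $(Y,\ro)$, accepting a scale change $\varepsilon\rightsquigarrow\delta/2$ dictated by the uniform continuity of $\phi^{-1}$, and then repair the resulting mismatch in the limit $\varepsilon\to 0$ by the monotonicity of $\eta\mapsto h_\eta$, i.e.\ by writing $h_{\ro}(\sigma_Y,\phi(K))=\sup_{\eta>0}h_\eta(\sigma_Y,\phi(K),\ro)$; one inequality plus symmetry (applied to the conjugacy $\phi^{-1}$) finishes the proof. Both arguments rest on the same two pillars — Lemma~\ref{lemaconjuga} (so that $I_n$ and the domains $Dom(\sigma^{n-1})$ transfer) and Proposition~\ref{propnovossepspan}/the transfer of separated sets — but the paper's version is more modular, delegating all the $\varepsilon$--$\delta$ bookkeeping to the previously established Proposition~\ref{invariance by the metric} (whose proof it cites rather than reproduces), whereas yours is self-contained: it inlines that bookkeeping, and in doing so makes explicit the one genuinely delicate point (that $\delta(\varepsilon)$ need not tend to $0$) which the paper's isometry trick avoids ever having to confront.
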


\begin{proof} Let $d'$ be the metric on $Y$ defined by $d'(y_1,y_2)=d(\phi^{-1}(y_1), \phi^{-1}(y_2)$, and let $\phi_{d,d'}: (X,d)\rightarrow (Y,d')$ be given by $\phi_{d,d'}(x)=\phi(x)$. Clearly, $\phi_{d,d'}$ is an isometric isomorphism. Furthermore, since $\phi$ and $\phi^{-1}$ are uniformly continuous, we obtain that $Id=\phi_{d,d'}\circ \phi^{-1}$ and $Id=\phi\circ\phi_{d,d'}^{-1}$ are  uniformly continuous maps from $(Y,\ro)$ to $(Y,d')$ and from $(Y,d')$ to $(Y,\ro)$, respectively. 
It follows, from Proposition~\ref{invariance by the metric}, that $h_{d'}(\sigma_Y)=h_{\ro}(\sigma_Y)$. 

To finish, we have to prove that $h_d(\sigma_X)=h_{d'}(\sigma_Y)$. As we mentioned above, $\phi_{d,d'}$ is an isometric isomorphism from $(X,d)$ to $(Y,d')$. Notice that $I_n(x)=I_n(\phi_{d,d'}(x))$ for all $x\in X$ and all $n\in \N$. Indeed, if $i\in I_n(x)$ then $x\in Dom(\sigma_X^i)$. So, from Lemma~\ref{lemaconjuga}, $\phi_{d,d'}(x)\in Dom(\sigma_Y^i)$, so that, $i\in I_n(\phi_{d,d'}(x))$. Hence, for all $x,y\in X$ 
\begin{align*}
 d'_n(\phi_{d,d'}(x),\phi_{d,d'}(y))=&d'_n(\phi(x),\phi(y))\\
=&\displaystyle\max_i d'(\sigma_Y^i(\phi(x)),\sigma_Y^i(\phi(y)))  \\
=&\displaystyle\max_i d'(\phi(\sigma_X^i(x)),\phi(\sigma_X^i(y))) \\
=&\displaystyle\max_i d(\sigma_X^i(x),\sigma_X^i(y)) \\
=& d_n(x,y). 
\end{align*}

Now, for each $\varepsilon>0$, $n\in\N$, $K\subseteq X$ compact and $F\subseteq Dom(\sigma_X^{n-1})$, an $(n,\varepsilon,\sigma_X,K\cap F)$-separated set gives rise to an $(n,\varepsilon,\sigma_Y,\phi_{d,d'}(K) \cap\phi_{d,d'}(K\cap F))$-separated set with the same cardinality. Thus, $\mbox{sep}(n,\varepsilon,\sigma_X,K\cap  F)\leq \mbox{sep}(n,\varepsilon,\sigma_Y,\phi_{d,d'}(K) \cap\phi_{d,d'}(K\cap F))$. This implies that $\mbox{sep}(n,\varepsilon,\sigma_X,K)\leq\mbox{sep}(n,\varepsilon,\sigma_Y,\phi_{d,d'}(K))$ and hence $h_{\varepsilon}(\sigma_X,K,d)\leq h_{\varepsilon}(\sigma_Y,\phi_{d,d'}(K),d')$. Therefore, $h_{d}(\sigma_X,K)\leq h_{d'}(\sigma_Y,\phi_{d,d'}(K))$ and hence $h_d(\sigma_X)\leq h_{d'}(\sigma_Y)=h_{\ro}(\sigma_Y)$. Analogously, one shows that $h_d(\sigma_X)\geq h_{\ro}(\sigma_Y)$.

\end{proof}

\section{Factor maps} 

In the previous section, we have described invariance of the metric entropy in terms of uniformly continuous conjugacies. In this section, we propose a definition of a factor map between two Deaconu-Renault systems and show that the metric entropy always decreases under a uniformly continuous factor map. 

\begin{definition}
Let $X, Y$ be topological spaces. A map $f:X\rightarrow Y$ is called a compact covering if, for every compact subset $K\subseteq Y$, there exists some compact subset $C\subseteq X$ such that $f(C)=K$. 
\end{definition}

For example, a proper map (that is, a map $f:X\rightarrow Y$ such that $f^{-1}(K)$ is compact for each compact set $K\subseteq Y$) is a compact covering.

\begin{definition}
Let $(X,\sigma_X)$ and $(Y,\sigma_Y)$ be Deaconu-Renault systems. We say that  $\phi:X\rightarrow Y$ is a factor map if it is a continuous, compact covering, surjective map such that
 $\phi(Dom(\sigma_X))\subseteq Dom(\sigma_Y)$,
$\sigma_Y(\phi(x)) = \phi(\sigma_X(x))$ for all $x\in Dom(\sigma_X) $, and  $\phi^{-1}(Dom(\sigma_Y))\subseteq Dom(\sigma_X)$.
\end{definition}

\begin{remark} Since $\phi^{-1}(Dom(\sigma_Y))\subseteq Dom(\sigma_X)$, we have that $Dom(\sigma_Y)\subseteq \phi(Dom(\sigma_X))$. Moreover, since $\phi(Dom(\sigma_X))\subseteq Dom(\sigma_Y)$ we have that $\phi(Dom(\sigma_X))= Dom(\sigma_Y)$

\end{remark}

Our definition of factor map is compatible with the definition of a conjugacy given in Definition~\ref{def of conjugacy}. Indeed, if a factor map has a continuous inverse, then it is a conjugacy, as we show below. 

\begin{proposition}
Let $\phi$ be a factor map between $(X,\sigma_X)$ and $(Y,\sigma_Y)$. If $\phi$ is injective with continuous inverse, then it is a conjugacy.
\end{proposition}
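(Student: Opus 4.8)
The plan is to verify that $\phi$ satisfies item~\ref{item:domain} of \cref{altconjug}, from which conjugacy follows immediately. First I would observe that $\phi$ is a homeomorphism: by the definition of a factor map it is continuous and surjective, it is injective by hypothesis, and its inverse is continuous by hypothesis. Hence $\phi$ is a continuous bijection with continuous inverse, i.e.\ a homeomorphism, so the standing hypothesis of \cref{altconjug} is met.

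Next I would note that the two requirements of item~\ref{item:domain} of \cref{altconjug} are already part of the factor-map data. The set equality $\phi(Dom(\sigma_X)) = Dom(\sigma_Y)$ is exactly the content of the remark following the definition of a factor map, where it is obtained by combining the inclusions $\phi(Dom(\sigma_X)) \subseteq Dom(\sigma_Y)$ and $\phi^{-1}(Dom(\sigma_Y)) \subseteq Dom(\sigma_X)$. The intertwining identity $\phi \circ \sigma_X = \sigma_Y \circ \phi$ on $Dom(\sigma_X)$ is precisely the relation $\sigma_Y(\phi(x)) = \phi(\sigma_X(x))$ imposed on every factor map. Thus $\phi$ is a homeomorphism satisfying item~\ref{item:domain} of \cref{altconjug}.

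Having checked this, I would invoke the equivalence of items~\ref{item:conjugacy} and~\ref{item:domain} in \cref{altconjug} to conclude that $\phi$ is a conjugacy. I do not expect any genuine obstacle: the whole argument amounts to matching the hypotheses of a factor map with continuous inverse against the characterization of conjugacy in \cref{altconjug}. The only point deserving a moment's care is that conjugacy as originally formulated in \cref{def of conjugacy} also demands the symmetric relation $\sigma_X \circ \phi^{-1} = \phi^{-1} \circ \sigma_Y$ on $Dom(\sigma_Y)$; however, this is not an extra thing to verify, since it is exactly what the equivalence in \cref{altconjug} delivers once item~\ref{item:domain} is in hand.
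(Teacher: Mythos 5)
Your argument is correct, and it takes a mildly different route from the paper's. You reduce the statement to item~\ref{item:domain} of \cref{altconjug}: $\phi$ is a homeomorphism (continuity and surjectivity from the factor-map definition, injectivity and continuity of the inverse by hypothesis), the set equality $\phi(Dom(\sigma_X)) = Dom(\sigma_Y)$ holds by the remark following the definition of a factor map, and the forward intertwining $\phi \circ \sigma_X = \sigma_Y \circ \phi$ on $Dom(\sigma_X)$ is part of the factor-map data; the equivalence of items~\ref{item:domain} and~\ref{item:conjugacy} then finishes the job. The paper instead works directly with Definition~\ref{def of conjugacy}: it observes that the only condition not already immediate is $\sigma_X \circ \phi^{-1} = \phi^{-1} \circ \sigma_Y$ on $Dom(\sigma_Y)$, and verifies it by hand, taking $y \in Dom(\sigma_Y)$, writing $y = \phi(x)$ with $x \in Dom(\sigma_X)$ via the same set equality, and applying $\phi^{-1}$ to $\phi(\sigma_X(x)) = \sigma_Y(\phi(x))$. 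The two proofs use exactly the same ingredients, and the paper's computation is in effect a proof of the implication (\ref{item:domain})$\Rightarrow$(\ref{item:conjugacy}) of \cref{altconjug} specialized to this situation; your version buys brevity by outsourcing that step to the cited lemma, while the paper's version is self-contained and makes the derivation of the inverse intertwining relation explicit.
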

\begin{proof}

All we need to show is that $\sigma_X \circ \phi^{-1}=\phi^{-1}\circ \sigma_Y$ in $Dom(\sigma_Y)$. Let $y \in Dom(\sigma_Y)$. Since $\phi$ is a factor map, we have that $Dom(\sigma_Y)= \phi (Dom(\sigma_X))$. So, there exists $x\in Dom(\sigma_X)$ such that $y=\phi(x)$. Since $\phi$ is a factor map, it commutes with the shift, that is, $\phi(\sigma_X(x))= \sigma_Y (\phi(x))$. Composing the two terms of the last equality on the left with $\phi^{-1}$, and using that $x=\phi^{-1}(y)$, we obtain that $\sigma_X(\phi^{-1}(y))= \phi^{-1}(\sigma_Y (y))$, as desired. 
\end{proof}

Next, we describe how a factor map preserves the domains of the iterates of the local homeomorphism $\sigma$.

\begin{lemma}\label{lemaconjuga2}
Let $(X,\sigma_X)$ and $(Y,\sigma_Y)$ be Deaconu-Renault systems and let $\phi\colon X \to Y$ be a factor map. Then, $\phi^{-1}(Dom(\sigma_{Y}^i)) \subseteq  Dom(\sigma_{X}^i)$
and $\sigma_Y^i(\phi(x))=\phi(\sigma_X^i(x))$, for all $i\in \N$ and $x\in Dom(\sigma_X^i)$. 
\end{lemma}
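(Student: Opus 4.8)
The plan is to prove both assertions simultaneously by induction on $i$, mirroring the proof of Lemma~\ref{lemaconjuga} but with the set equalities enjoyed by a conjugacy replaced by the one-sided inclusions that come with a factor map. The base case $i=1$ requires no work: the inclusion $\phi^{-1}(Dom(\sigma_Y))\subseteq Dom(\sigma_X)$ and the identity $\sigma_Y(\phi(x))=\phi(\sigma_X(x))$ on $Dom(\sigma_X)$ are precisely two of the three defining conditions of a factor map.

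For the inductive step, I would assume both assertions for all indices in $\{1,\dots,i-1\}$ and establish them for $i$, freely using the elementary facts about iterated domains that follow from $Dom(\sigma^n)=\sigma^{-1}(Dom(\sigma^{n-1}))$: namely $Dom(\sigma^i)\subseteq Dom(\sigma^{i-1})$, and that $x\in Dom(\sigma^i)$ holds if and only if $x\in Dom(\sigma^{i-1})$ and $\sigma^{i-1}(x)\in Dom(\sigma)$. I would treat the intertwining relation first. Given $x\in Dom(\sigma_X^i)$, we have $\sigma_X^{i-1}(x)\in Dom(\sigma_X)$, so the base-case intertwining applies at the point $\sigma_X^{i-1}(x)$ and, combined with the inductive intertwining $\phi(\sigma_X^{i-1}(x))=\sigma_Y^{i-1}(\phi(x))$ (available since $x\in Dom(\sigma_X^{i-1})$), gives $\phi(\sigma_X^{i}(x))=\sigma_Y(\phi(\sigma_X^{i-1}(x)))=\sigma_Y^{i}(\phi(x))$. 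Here one checks painlessly that $\sigma_Y^{i-1}(\phi(x))\in Dom(\sigma_Y)$, so that $\sigma_Y$ may legitimately be applied, because $\sigma_X^{i-1}(x)\in Dom(\sigma_X)$ forces $\phi(\sigma_X^{i-1}(x))\in\phi(Dom(\sigma_X))\subseteq Dom(\sigma_Y)$.

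For the domain inclusion, I would take $x$ with $\phi(x)\in Dom(\sigma_Y^i)$. Since $Dom(\sigma_Y^i)\subseteq Dom(\sigma_Y^{i-1})$, the inductive inclusion yields $x\in Dom(\sigma_X^{i-1})$, whence $\sigma_X^{i-1}(x)$ is defined and the inductive intertwining gives $\phi(\sigma_X^{i-1}(x))=\sigma_Y^{i-1}(\phi(x))$. Because $\phi(x)\in Dom(\sigma_Y^i)$ forces $\sigma_Y^{i-1}(\phi(x))\in Dom(\sigma_Y)$, I obtain $\phi(\sigma_X^{i-1}(x))\in Dom(\sigma_Y)$, and the factor-map inclusion $\phi^{-1}(Dom(\sigma_Y))\subseteq Dom(\sigma_X)$ then gives $\sigma_X^{i-1}(x)\in Dom(\sigma_X)$. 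Together with $x\in Dom(\sigma_X^{i-1})$ this is exactly the criterion for $x\in Dom(\sigma_X^i)$, closing the induction.

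I do not expect a genuine obstacle: the argument is a straightforward bootstrap from the defining properties of a factor map. The only thing demanding care is the bookkeeping of domains --- verifying that each point lies in the relevant $Dom(\sigma^{k})$ before a power of $\sigma_X$ or $\sigma_Y$ is applied, since these maps are only partially defined --- together with the discipline of invoking only inclusions rather than equalities. In particular, unlike in the conjugacy case one should not attempt to prove $\phi(Dom(\sigma_X^i))=Dom(\sigma_Y^i)$; the proof is organized around pushing points forward through $\phi$ and the intertwining, not around pulling them back with $\phi^{-1}$.
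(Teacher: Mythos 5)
Your proof is correct and is essentially the paper's own argument: both proceed by induction on $i$, use the three defining properties of a factor map for the base case, derive the domain inclusion by pushing $\sigma_X^{i-1}(x)$ through $\phi$ into $Dom(\sigma_Y)$ and pulling back with $\phi^{-1}(Dom(\sigma_Y))\subseteq Dom(\sigma_X)$, and obtain the intertwining by splitting off one factor of $\sigma$. The only (immaterial) differences are that you prove the intertwining before the domain inclusion and split $\sigma^i$ as $\sigma\circ\sigma^{i-1}$ rather than $\sigma^{i-1}\circ\sigma$.
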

\begin{proof}
We prove the lemma by induction. For $k=1$, since $\phi$ is a factor, we have that $\phi^{-1}(Dom(\sigma_Y))\subseteq Dom(\sigma_X)$ and $\sigma_Y(\phi(x))=\phi(\sigma_X(x))$ for $x\in Dom(\sigma_X)$.

Suppose the hypothesis is true for $k$. We show it for $k+1$. First, we prove that  
$$\phi^{-1}(Dom(\sigma_Y^{k+1}))\subseteq Dom(\sigma_X^{k+1}).$$ Let $x\in \phi^{-1}(Dom(\sigma_Y^{k+1}))$. Then, $\phi(x)\in Dom(\sigma_Y^{k+1})\subseteq Dom(\sigma_Y^{k})$, and so $x\in \phi^{-1}(Dom(\sigma_Y^k))\subseteq Dom(\sigma_X^k)$ (where the last containment follows from the induction hypothesis). Again, from the induction hypothesis, we obtain that $\phi(\sigma_X^k(x))=\sigma_Y^k(\phi(x))\in Dom(\sigma_Y)$. Hence, 
 $\sigma_X^k(x)\in \phi^{-1}(Dom(\sigma_Y))\subseteq Dom(\sigma_X)$, which implies that $x\in Dom(\sigma_X^{k+1})$.

To finish, notice that $\phi(\sigma_X ^{k+1}(x))=\phi(\sigma_X^k(\sigma_X(x)))=\sigma_Y^k(\phi(\sigma_X(x)))=\sigma_Y^{k+1}(\phi(x))$.
\end{proof}

We can now prove that the metric entropy always decreases under uniformly continuous factor maps.

\begin{theorem}\label{factorentropy} Let $(X,\sigma_X)$ and $(Y,\sigma_Y)$ be Deaconu--Renault systems, and let $\phi\colon X \to Y$ be a uniformly continuous factor map. Then,
 $h_d(\sigma_X)\geq h_d(\sigma_Y) $.
\end{theorem}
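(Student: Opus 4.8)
The plan is to prove the contrapositive phenomenon that a factor map can only merge orbits, so separated sets downstairs lift to at least as many separated sets upstairs. Write $\ro$ for the metric on $Y$ (so the claim reads $h_d(\sigma_X)\ge h_\ro(\sigma_Y)$). Since $h_\ro(\sigma_Y)=\sup_L h_\ro(\sigma_Y,L)$ over compact $L\subseteq Y$, it suffices to fix a compact $L\subseteq Y$ and show $h_\ro(\sigma_Y,L)\le h_d(\sigma_X)$. Because $\phi$ is a compact covering, I first choose a compact $K\subseteq X$ with $\phi(K)=L$; this $K$ is the set against which I measure entropy upstairs. Everything then reduces to the combinatorial comparison $ssep(n,\varepsilon,\sigma_Y,L)\le ssep(n,\delta,\sigma_X,K)$ for a suitable $\delta=\delta(\varepsilon)$ coming from uniform continuity of $\phi$, which I would establish by lifting separated sets injectively through $\phi$.

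The engine is a comparison of the two dynamical metrics $d_n^{X}$ and $d_n^{Y}$. By Proposition~\ref{propnovossepspan} I may work with separated subsets of $L\cap Dom(\sigma_Y^{n-1})$, i.e. $ssep(n,\varepsilon,\sigma_Y,L)=sep(n,\varepsilon,\sigma_Y,L\cap Dom(\sigma_Y^{n-1}))$, and this restriction is exactly what makes the index sets $I_n$ line up. Given $\varepsilon>0$, uniform continuity yields $\delta>0$ with $d(a,b)\le\delta\Rightarrow\ro(\phi(a),\phi(b))\le\varepsilon$, and I may take $\delta\le\varepsilon$ so that $\delta\to 0$ as $\varepsilon\to 0$. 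For $x_1,x_2\in Dom(\sigma_X^{n-1})$ one has $I_n(x_j)=\{0,\dots,n-1\}$, and by Lemma~\ref{lemaconjuga2} also $\phi(x_j)\in Dom(\sigma_Y^{n-1})$, so $I_n(\phi(x_j))=\{0,\dots,n-1\}$ and $\sigma_Y^i(\phi(x_j))=\phi(\sigma_X^i(x_j))$ for $0\le i\le n-1$. Hence $d_n^{X}(x_1,x_2)\le\delta$ forces $\ro(\phi(\sigma_X^i(x_1)),\phi(\sigma_X^i(x_2)))\le\varepsilon$ for all such $i$, that is $d_n^{Y}(\phi(x_1),\phi(x_2))\le\varepsilon$; contrapositively, $d_n^{Y}(\phi(x_1),\phi(x_2))>\varepsilon$ implies $d_n^{X}(x_1,x_2)>\delta$.

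Next I lift. Let $B\subseteq L\cap Dom(\sigma_Y^{n-1})$ be $(n,\varepsilon,\sigma_Y,L\cap Dom(\sigma_Y^{n-1}))$-separated. For each $y\in B$, since $\phi(K)=L$ I pick $x_y\in K$ with $\phi(x_y)=y$; as $y\in Dom(\sigma_Y^{n-1})$ and $\phi^{-1}(Dom(\sigma_Y^{n-1}))\subseteq Dom(\sigma_X^{n-1})$ by Lemma~\ref{lemaconjuga2}, I get $x_y\in K\cap Dom(\sigma_X^{n-1})$. The assignment $y\mapsto x_y$ is injective because $\phi(x_y)=y$, and by the comparison above $A=\{x_y:y\in B\}$ is $(n,\delta,\sigma_X,K\cap Dom(\sigma_X^{n-1}))$-separated with $|A|=|B|$. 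Taking the supremum over all such $B$ (and noting the argument respects infinite cardinalities) gives $ssep(n,\varepsilon,\sigma_Y,L)\le ssep(n,\delta,\sigma_X,K)$. Dividing by $n$, taking $\log$ and $\limsup_n$ yields $h_\varepsilon(\sigma_Y,L,\ro)\le h_\delta(\sigma_X,K,d)\le h_d(\sigma_X,K)$, the last step being the monotone limit in Definition~\ref{defentropy}; letting $\varepsilon\to 0$ (so $\delta\to 0$) gives $h_\ro(\sigma_Y,L)\le h_d(\sigma_X,K)\le h_d(\sigma_X)$, and taking the supremum over $L$ finishes the proof.

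I expect the main obstacle to be the dynamical-metric comparison in the second paragraph: one must verify that the index sets $I_n$ on both sides coincide with $\{0,\dots,n-1\}$ and that the intertwining $\sigma_Y^i\circ\phi=\phi\circ\sigma_X^i$ is available on precisely the relevant indices. It is the reduction to closed sets inside $Dom(\sigma^{n-1})$ provided by Proposition~\ref{propnovossepspan} that aligns the indices, and the two domain facts (compact covering to land in $K$, and $\phi^{-1}(Dom(\sigma_Y^{n-1}))\subseteq Dom(\sigma_X^{n-1})$ to land in $Dom(\sigma_X^{n-1})$) must be invoked simultaneously when choosing the preimages $x_y$.
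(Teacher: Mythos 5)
Your proposal is correct, but it runs the argument through the dual mechanism to the one the paper uses. The paper works with \emph{spanning} sets and pushes them \emph{forward}: for each closed $F\subseteq Dom(\sigma_Y^{n-1})$ it pulls $F$ back to the closed set $\phi^{-1}(F)\subseteq Dom(\sigma_X^{n-1})$ (Lemma~\ref{lemaconjuga2} plus continuity), shows that the $\phi$-image of an $(n,\delta,\sigma_X,K_X\cap\phi^{-1}(F))$-spanning set is $(n,\varepsilon,\sigma_Y,K_Y\cap F)$-spanning, takes the supremum over $F$ to get $sspan(n,\varepsilon,\sigma_Y,K_Y)\leq sspan(n,\delta,\sigma_X,K_X)$, and then invokes the fact that entropy can be computed with $sspan$ in place of $ssep$ (Proposition~\ref{leao} and the remark after Definition~\ref{defentropy}). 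You instead work with \emph{separated} sets and lift them \emph{backward}: Proposition~\ref{propnovossepspan}(1) lets you identify $ssep$ with $sep(\cdot\cap Dom(\sigma^{n-1}))$ outright, so you never need the closed sets $F$ or the supremum over them, and an injective choice of preimages inside $K\cap Dom(\sigma_X^{n-1})$ stays $\delta$-separated by the contrapositive of the uniform-continuity estimate. The ingredients you share with the paper (compact covering to produce $K$ with $\phi(K)=L$, uniform continuity to produce $\delta$, and Lemma~\ref{lemaconjuga2} to align the index sets $I_n$ and supply the intertwining $\sigma_Y^i\circ\phi=\phi\circ\sigma_X^i$) are essentially forced by the setting; the trade-off is that your route argues directly on the quantity $ssep$ appearing in Definition~\ref{defentropy} at the cost of leaning on Proposition~\ref{propnovossepspan}, while the paper's route avoids that proposition but needs the $sspan$ reformulation of entropy and the closed-set bookkeeping. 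One cosmetic point: your lifting map is a section of $\phi$, so injectivity and preservation of (possibly infinite) cardinalities are automatic, exactly as you note; and your remark that $\delta\to 0$ as $\varepsilon\to 0$ is actually not needed, since the monotone bound $h_\delta(\sigma_X,K,d)\leq h_d(\sigma_X,K)$ already holds for each fixed $\delta$.
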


\begin{proof}
Let $K_Y$ be a compact subset of $Y$. Since $\phi$ is a factor map, then there exists a compact set $K_X\subseteq X$ such that $\phi(K_X)=K_Y$.

Let $\varepsilon>0$, and choose let $0<\delta<\varepsilon$ such that if $d(x,y)<\delta$ then $d(\phi(x),\phi(y))<\epsilon$. Given $n\in \N$, we prove that $sspan (n,\varepsilon, \sigma_Y, K_Y)\leq sspan (n,\delta, \sigma_X, K_X)$.

Let $F$ be a closed subset of $Dom(\sigma^{n-1}_Y)$. Then, by Lemma~\ref{lemaconjuga2}, $\phi^{-1}(F)$ is a closed subset of $Dom(\sigma^{n-1}_X)$.

Chose $A$ as an $(n,\delta,\sigma_X, K_X\cap \phi^{-1}(F))$ spanning set. Then, $\phi(A)$ is an $(n,\varepsilon,\sigma_Y, K_Y\cap F)$ spanning set. Indeed, for each $z\in  K_Y\cap F$, let $x\in K_X\cap \phi^{-1}(F)$ be such that $z=\phi(x)$. 

Let $a\in A$ be such that $d_n(a,x)\leq \delta$ (so $\displaystyle \max_{i\in I_n(a)\cap I_n(x)}\{ d(\sigma^i(x),\sigma^i(a))\}\leq \delta$).
Then, by Lemma~\ref{lemaconjuga2}, we have that
\[ d_n(z,\phi(a))=\max_{i\in I_n(\phi(x))\cap I_n(\phi(a))}\{ d(\sigma^i(\phi(x)),\sigma^i(\phi(a)))\}=\max_{i\in I_n(x)\cap I_n(a)}\{d(\phi(\sigma^i(x)),\phi(\sigma^i(a)))\}\leq \varepsilon,
\]
where the last inequality follows from the uniform continuity of $\phi$. Hence, $\phi(A)$ is an $(n,\varepsilon,\sigma_Y, K_Y\cap F)$ spanning set, as claimed.

Now, since the cardinality of $\phi(A)$ is less or equal to the cardinality of $A$, it follows that 
$span(n,\varepsilon,\sigma_Y, K_Y\cap F)\leq span (n,\delta,\sigma_X, K_X\cap \phi^{-1}(F))\leq  sspan (n,\delta,\sigma_X, K_X) $. Taking supremum over closed sets $F$ contained in the domain of $\sigma^{n-1}_Y$, it follows that $sspan(n,\varepsilon,\sigma_Y, K_Y)\leq  sspan (n,\delta,\sigma_X, K_X) $. Since $\delta\rightarrow 0$ as $\varepsilon \rightarrow 0$, the above inequality implies that $h_d(\sigma_Y,K_Y)\leq h_d(\sigma_X,K_X) \leq h_d(\sigma_X)$. Taking supremum over all the compact sets $K_Y \subseteq Y$ we obtain that $h_d(\sigma_Y) \leq h_d(\sigma_X)$, as desired.
\end{proof}

\begin{example} Let $(\R, d_1)$ and $(\R,d_2)$ be the Deaconu-Renault systems of Example \ref{ex1}, where $d_1$ and $d_2$ are the discrete and usual metrics, respectively, and $\sigma:\R\rightarrow \R$ is the homeomorphism $\sigma(x)=2x$. Notice that $\phi:(\R,d_1)\rightarrow (\R,d_2)$, given by $\phi(x)=x$, is a uniformly continuous map which is not a factor map since it is not a compact covering map. In this case, $h_{d_2}(\sigma)=log(2)$ and $h_{d_1}(\sigma)=0$. So, the hypothesis that a factor map is a compact covering can not be dropped, if we want the entropy to decrease under uniformly continuous factors. 
\end{example}

\begin{remark}
In the literature of countable Markov shift spaces (in particular of shifts associated with locally finite graphs), there are two proposed entropy theories, one due to Salama and the other to Gurevic \cite{gurevich, Sal1, Wag2, LindMarcus}. In both cases there are examples where entropy can increase via factors (in this setting a factor is a continuous, surjective, shift commuting map), see \cite{LindMarcus, Sal1, Pet2} and \cite[Example 7.2.8]{Kitchens}. Later, in Section~\ref{graphEntropy}, we compute the entropy of Deaconu-Renault systems associated with graphs and show that, in the case of locally finite graphs, our proposed entropy coincides with the Gurevic entropy. So, for Theorem~\ref{factorentropy} the uniform continuity requirement is essential.
\end{remark}

\section{The entropy of invariant subsets}

In this short section, we define invariant subsets of a Deaconu-Renault system and its metric entropy. This is useful later, to compute the entropy of Deaconu-Renault systems associated with graphs that are not connected, for example.

\begin{definition}
Let $(X,\sigma)$ be a Deaconu-Renault system. We say that $Y \subseteq X$ is invariant if $\sigma(Dom(\sigma)\cap Y)\subseteq Y$. Given an open invariant subset $Y$, we call the system $(Y,\sigma|_Y)$ a restriction Deaconu-Renault system.
\end{definition} 

\begin{remark}
Notice that a restriction Deaconu-Renault system is always a Deaconu-Renault system itself, and so its metric entropy $h_d(\sigma|_Y)$ is well-defined.
\end{remark}

In the result below, we relate the metric entropy of a Deaconu-Renault system with the metric entropy of restriction systems. 

\begin{proposition}\label{prop:entropy-restriction}
Let $(X,\sigma)$ be a Deaconu-Renault system, $Y_i$ be open invariant subsets of $X$, and $(Y_i,\sigma|_{Y_i})$ be restrictions to $Y_i$, for $i=1,\ldots ,k$, such that the union $\bigcup\limits_{i=1}^k Y_i$ (not necessarily disjoint) is $X$. Then: 
\begin{enumerate}
    \item
$ h_d(\sigma) \geq\max\limits_{1\leq i \leq k} h_d( \sigma|_{Y_i}).
$
\item If each $Y_i$ is closed then $
h_d(\sigma) =\max\limits_{1\leq i\leq k} h_d( \sigma|_{Y_i}).
$
\item If $Y_i\cap Y_j=\emptyset$ for all $i\neq j$ then $
h_d(\sigma) =\max\limits_{1\leq i\leq k} h_d( \sigma|_{Y_i})
$.
\end{enumerate}

\end{proposition}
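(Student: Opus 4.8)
I would prove the three items in order, since each builds on the previous one. The key technical fact I need throughout is that the entropy $h_d(\sigma, K)$ is monotone in the compact set $K$ (stated in the excerpt's final remark of \S3): if $K \subseteq L$ then $h_d(\sigma, K) \le h_d(\sigma, L)$. This follows because separated sets inside $K \cap F$ are also separated sets inside $L \cap F$, so $sep(n,\varepsilon,\sigma,K\cap F) \le sep(n,\varepsilon,\sigma,L\cap F)$ at every level, and the supremum/limsup/limit defining entropy preserve the inequality.

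\textbf{Item 1 (lower bound).} This is the easy direction. Fix $i$ and a compact set $K \subseteq Y_i$. Since $Y_i$ is open and invariant, the restriction $(Y_i, \sigma|_{Y_i})$ is a genuine Deaconu--Renault system, and for $x \in K$ the set $I_n(x)$ and the iterates $\sigma^j(x)$ computed in $Y_i$ agree with those computed in $X$ (because invariance of $Y_i$ means $Dom(\sigma|_{Y_i}^j) = Dom(\sigma^j) \cap Y_i$, and the orbit never leaves $Y_i$). Hence every $(n,\varepsilon,\sigma|_{Y_i},K\cap F)$-separated set is an $(n,\varepsilon,\sigma,K\cap F)$-separated set and $d_n$ matches on both systems. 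Therefore $h_d(\sigma|_{Y_i}, K) = h_d(\sigma, K) \le h_d(\sigma)$, and taking the supremum over compact $K \subseteq Y_i$ gives $h_d(\sigma|_{Y_i}) \le h_d(\sigma)$. Since $i$ was arbitrary, the maximum is bounded by $h_d(\sigma)$.

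\textbf{Items 2 and 3 (the reverse inequality).} Here the main obstacle is to show $h_d(\sigma) \le \max_i h_d(\sigma|_{Y_i})$, i.e.\ that the separated-set count on an arbitrary compact $K \subseteq X$ is controlled by the counts on the pieces. The strategy is: given a compact $K \subseteq X$ and a closed $F \subseteq Dom(\sigma^{n-1})$, take an $(n,\varepsilon,\sigma,K\cap F)$-separated set $A$ of maximal cardinality and partition it as $A = \bigcup_{i=1}^k A_i$, where $A_i = A \cap Y_i$ (using $X = \bigcup_i Y_i$; in the disjoint case of Item 3 this partition is literal, in the general case of Item 2 I assign each point to one containing $Y_i$). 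Then $A_i$ is an $(n,\varepsilon,\sigma|_{Y_i}, (K\cap Y_i)\cap F_i)$-separated set for a suitable closed $F_i \subseteq Dom(\sigma|_{Y_i}^{n-1})$, so $|A_i| \le ssep(n,\varepsilon,\sigma|_{Y_i}, K\cap Y_i)$. Summing, $|A| \le \sum_{i=1}^k ssep(n,\varepsilon,\sigma|_{Y_i}, K_i) \le k \cdot \max_i ssep(n,\varepsilon,\sigma|_{Y_i}, K_i)$, where $K_i = K \cap Y_i$ is compact precisely because $Y_i$ is closed (this is exactly where the closedness hypothesis in Item 2 and the disjointness hypothesis in Item 3 are used — in Item 3, disjoint open sets in a Hausdorff/normal-enough setting, together with the union being all of $X$, force each $Y_i$ to also be closed, so the same argument applies). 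Taking $\frac1n \log$, the factor $k$ contributes $\frac1n \log k \to 0$, and the limsup is governed by the maximum term.

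\textbf{Where the care is needed.} The delicate point is the limsup: $\frac1n \log |A| \le \frac1n\log k + \frac1n \log\big(\max_i ssep(n,\varepsilon,\sigma|_{Y_i},K_i)\big)$, and I must pass to $\limsup_n$ on the right. Since there are only finitely many indices $i$, I can extract a single index $i_0$ that realizes the maximum for infinitely many $n$, so that $\limsup_n \frac1n \log\max_i ssep(\cdots) \le \max_i \limsup_n \frac1n \log ssep(n,\varepsilon,\sigma|_{Y_i},K_i) = \max_i h_\varepsilon(\sigma|_{Y_i}, K_i, d)$. Letting $\varepsilon \to 0$ and taking the supremum over compact $K \subseteq X$ (noting $K_i \subseteq Y_i$ is compact) then yields $h_d(\sigma) \le \max_i h_d(\sigma|_{Y_i})$, which combined with Item 1 gives equality. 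The main obstacle is precisely ensuring the pieces $K \cap Y_i$ are compact and that the restriction of a closed $F$ gives a legitimate closed subset of $Dom(\sigma|_{Y_i}^{n-1})$; both reduce to the closedness of $Y_i$, which is why Item 1 needs no such hypothesis but Items 2 and 3 do.
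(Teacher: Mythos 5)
Your proposal is correct and follows essentially the same route as the paper's proof: Item 1 by transferring separated sets from the restricted system to the ambient one, Items 2 and 3 by splitting a maximal $(n,\varepsilon,\sigma,K\cap F)$-separated set over the pieces $K\cap Y_i$, absorbing the factor $k$ via $\frac{1}{n}\log k \to 0$ and using that a limsup of a maximum over finitely many sequences equals the maximum of the limsups, and finally reducing Item 3 to Item 2 because disjoint open sets covering $X$ are automatically closed. The one point you gloss over in Item 1 --- and the one place the paper works harder --- is that a set $F$ closed in $Y_i$ with $F\subseteq Dom(\sigma|_{Y_i}^{n-1})$ need not be closed in $X$ (since $Y_i$ is only open), so an $(n,\varepsilon,\sigma|_{Y_i},K\cap F)$-separated set does not directly count toward $ssep(n,\varepsilon,\sigma,K)$; the paper repairs this by writing $Dom(\sigma|_{Y_i}^{n-1})$ as an increasing union of sets closed in $X$ and using that a maximal separated set is finite, hence contained in one of them (equivalently, one can invoke Proposition~\ref{propnovossepspan} to replace the supremum over closed sets by $sep(n,\varepsilon,\sigma,K\cap Dom(\sigma^{n-1}))$), after which your argument goes through verbatim.
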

\begin{proof}
To simplify the computations, we assume $k=2$ in this proof. The general case then follows straightforwardly.

Let us prove the first item. By hypothesis, $X=Y_1\cup Y_2$. Let $K_i$ be a compact subset of $Y_i$ and $F_i$ be a closed subset of $Dom(\sigma|_{Y_i}^{n-1})$, $i=1,2$.
Since $X$ is a metric space and $Dom(\sigma|_{Y_i}^{n-1})$ is open, by Remark~\ref{uniao de fechados}, we can write $Dom(\sigma|_{Y_i}^{n-1}) = \bigcup\limits_{j=1}^{\infty} G_j^i$, where $G_j^i$ are closed sets of $X$ and $G_j^i\subseteq G_{j+1}^i$ for all $j$. Because each $Y_i$ has the subspace topology, we have that $F_i = F_i' \cap Y_i$, where $F_i'$ is a closed set of $X$. Furthermore, $F_i'\cap G_j^i$ is a closed set of $X$ and is contained in $Dom(\sigma|_{Y_i}^{n-1})$ for all $j$ and each $i=1,2$. 

Now, let $A$ be an $(n,\varepsilon,\sigma|_{Y_i},K_i\cap F_i)$-separated set of maximum cardinality in $Y_i$. Then, $A$ is a finite set, and there exists $j_0$ such that every element in $A$ belongs to $F_i'\cap G_{j_0}^i$.
 So, $A$ is an $(n,\varepsilon,\sigma,K_i\cap (F_i'\cap G_{j_0}^i))$-separated set and hence
  $$sep(n,\varepsilon,\sigma|_{Y_i},K_i\cap F_i)\leq sep (n,\varepsilon,\sigma,K_i\cap (F_i'\cap G_{j_0}^i)) \leq ssep (n,\varepsilon,\sigma,K_i). $$
Therefore, $\mbox{ssep}(n,\varepsilon,\sigma|_{Y_i},K_i)\leq\mbox{ssep}(n,\varepsilon,\sigma,K_i)$,
and $h_d( \sigma ) \geq\max h_d( \sigma|_{Y_i})$, as desired.

Now we prove the second item. 

Let $K\subset X$ be a compact set, $F\subseteq Dom(\sigma^{n-1})$ a closed set and $A\subset K\cap F$ an $(n,\varepsilon,\sigma,K\cap F)$-separated set of maximum cardinality. Let $K_i=K\cap Y_i$, which is compact since each $Y_i$ is closed. Since $A$ is finite, we have that $A=(A\cap Dom(\sigma|_{Y_1}^{n-1}))\cup (A\cap Dom(\sigma|_{Y_2}^{n-1}))$, where the sets $G_j^i$ are as in the proof of the first item. 

Then,  
\begin{align*}(n,\varepsilon,\sigma,K\cap F)&\leq \mbox{sep}(n,\varepsilon,\sigma|_{Y_1},K_1\cap (F\cap G_{k_1}^1))+\mbox{sep}(n,\varepsilon,\sigma|_{Y_2},K_2\cap (F\cap G_{k_2}^2)) \\ &
\leq 2\max_{1\leq i\leq 2}\mbox{sep}(n,\varepsilon,\sigma|_{Y_i},K_i\cap (F\cap G_{k_i}^i))
\leq 2\max_{1\leq i\leq2}\mbox{ssep}(n,\varepsilon,\sigma|_{Y_i},K_i).
\end{align*}

Taking the supremum over all closed subsets of $Dom(\sigma^{n-1})$, we obtain that 
$$
\mbox{ssep}(n,\varepsilon,\sigma,K)\leq 2\max_{1\leq i\leq2}\mbox{ssep}(n,\varepsilon,\sigma|_{Y_i},K_i). 
$$ 
Therefore,
\begin{align*}
h_{\varepsilon}(\sigma,K,d)&=\limsup_{n\to\infty}\frac{1}{n} \log \mbox{ssep}(n,\varepsilon,\sigma,K)\\
                           &\leq
\limsup_{n\to\infty}\frac{1}{n}\log 2+ \limsup_{n\to\infty}\frac{1}{n} \log\max_{1\leq i\leq 2}\mbox{ssep}(n,\varepsilon,\sigma|_{Y_i},K_i)
\\&=\max_{1\leq i\leq 2}h_{\varepsilon}(\sigma|_{Y_i},K_i,d).
\end{align*}
Letting $\varepsilon\to 0$ and taking the supremum over all compact subsets of $X$, we conclude that
$h_d(\sigma)\leq \max\limits_{1\leq i\leq 2}h_d(\sigma|_{Y_i})$. By the first item, we know that $h_d(\sigma)\geq \max\limits_{1\leq i\leq 2} h_d(\sigma|_{Y_i})$ and hence  $h_d(\sigma)= \max\limits_{1\leq i\leq 2}h_d(\sigma|_{Y_i})$, as desired.

The third item of the proposition follows from the second one, since if the subsets $Y_j$ are disjoint then they are closed. 
\end{proof}

\section{Entropy via open covers}\label{topoent}

In this section, we generalize the classical notion of entropy via covers to include Deaconu-Renault systems. For metrizable spaces, we show that the entropy of this section is a lower bound to the metric entropy. We start recalling the definition of the join of two covers.

\begin{definition}
Let $X$ be a topological space, $Y,Z\subseteq X$, and let $\alpha,\beta$ be covers of $Y$ and $Z$, respectively. The join of $\alpha$ and $\beta$, denoted by $\alpha\vee\beta$, is the set of all the intersections $A\cap B$, where $A\in \alpha$ and $B\in \beta$. 
\end{definition}

\begin{remark} In the previous definition, $\alpha\vee\beta$ is a cover of $Y\cap Z$. Moreover, if $\alpha,\beta$ are open covers of $Y$ and $Z$, then $\alpha\vee \beta$ is an open cover of $Y\cap Z$.
\end{remark}

\begin{definition} Let $X$ be a topological space, $Y\subseteq X$ and $\alpha$ be a cover of $Y$. We denote by $N(\alpha,Y)$ the smallest cardinality among the cardinalities of sub-covers of $\alpha$. If $N(\alpha,Y)<\infty$ then we define $H(\alpha,Y)=log(N(\alpha,Y))$, and if $N(\alpha,Y)=\infty$ then we define $H(\alpha,Y)=\infty$.
\end{definition}

Let $(X,\sigma)$ be a Deaconu-Renault system, $Y\subseteq X$ and let $\alpha$ be a cover of $Y$. For each $i\in\N$, the family of all the sets $\sigma^{-i}(A)$, with $A\in \alpha$, is a cover of $\sigma^{-i}(Y)$. We denote this cover by $\sigma^{-i}(\alpha)$. Moreover, for each $n\in \N$ we define 
$$\alpha_n:=\alpha\vee\sigma^{-1}(\alpha)\vee\ldots \vee\sigma^{-n}(\alpha),$$
which is a cover of 
$$Y_n:=Y\cap \sigma^{-1}(Y)\cap\ldots \cap \sigma^{-n}(Y).$$

 \begin{remark}
 It is possible that $Y_n=\emptyset$, for some $n\in \N$. In this case, $N(\alpha_n,Y_n)=1$ and $H(\alpha_n,Y_n)=log(N(\alpha_n,Y_n))=log(1)=0$. Furthermore, in this situation, we also have that $Y_m=\emptyset,$ for each $m\geq n$, and hence $H(\alpha_m,Y_m)=0$ for each $m\geq n$. It is also possible that $N(\alpha_n,Y_n)=\infty$, in which case we have that $H(\alpha_n,Y_n)=\infty$.
\end{remark}

The following lemma describes properties of $N(\alpha_n, Y_n)$.

\begin{lemma}\label{lemma02} Let $(X,\sigma)$ be a Deaconu-Renault system, $Y,Z\subseteq X$, $\alpha$ a cover of $Y$, $\beta$ a cover of $Z$, and $m,n\in \N$. Then, the following inequalities are true.
\begin{enumerate}
    \item $N(\alpha \vee \beta,Y\cap Z)\leq N(\alpha,Y)N(\beta,Z)$.
    \item $N(\sigma^{-n}(\alpha)_m,\sigma^{-n}(Y)_m)\leq N(\alpha_m,Y_m).$
\item $N(\alpha_{m+n},Y_{n+m})\leq N(\alpha_n,Y_n)N(\alpha_m,Y_m).$
\item $N(\beta_n,Y_n)\leq N(\beta_n,Z_n)$, if $Y\subseteq Z$.

\end{enumerate}
\end{lemma}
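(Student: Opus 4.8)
The plan is to prove each of the four inequalities in \Cref{lemma02} directly from the definition of $N(\cdot,\cdot)$ as the minimal cardinality of a subcover, using the elementary behavior of preimages and joins. Throughout, the key observation is that $\sigma^{-i}$ commutes with intersections and that a subcover of a join can be built from subcovers of its factors.

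For Item~1, I would start with minimal subcovers $\{A_1,\dots,A_p\}\subseteq\alpha$ of $Y$ and $\{B_1,\dots,B_q\}\subseteq\beta$ of $Z$, where $p=N(\alpha,Y)$ and $q=N(\beta,Z)$. Then the family $\{A_s\cap B_t : 1\le s\le p,\ 1\le t\le q\}$ is a subcover of $\alpha\vee\beta$ covering $Y\cap Z$, and it has at most $pq$ elements, giving the bound. (If either factor is infinite the inequality is trivial.) For Item~2, I would note that $\sigma^{-n}(\alpha)_m = \bigvee_{j=0}^m \sigma^{-j}(\sigma^{-n}(\alpha)) = \sigma^{-n}(\alpha_m)$, since $\sigma^{-j}\circ\sigma^{-n}=\sigma^{-(j+n)}$ acting on the sets, and likewise $\sigma^{-n}(Y)_m=\sigma^{-n}(Y_m)$. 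Given a minimal subcover $\{U_1,\dots,U_r\}$ of $\alpha_m$ covering $Y_m$, the preimages $\{\sigma^{-n}(U_1),\dots,\sigma^{-n}(U_r)\}$ cover $\sigma^{-n}(Y_m)$; thus $N(\sigma^{-n}(\alpha)_m,\sigma^{-n}(Y)_m)\le r = N(\alpha_m,Y_m)$. The only subtlety is that taking preimages can only merge or shrink, never increase, the count of sets needed.

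Item~3 is the submultiplicativity that drives the later entropy limit. I would write $\alpha_{m+n} = \alpha_n \vee \sigma^{-(n+1)}(\alpha)\vee\cdots\vee\sigma^{-(n+m)}(\alpha) = \alpha_n \vee \sigma^{-(n+1)}(\alpha_{m-1})$; more cleanly, $\alpha_{n+m}=\alpha_n\vee\sigma^{-(n+1)}\!\big(\bigvee_{j=0}^{m-1}\sigma^{-j}(\alpha)\big)$, but to match the stated exponents I would instead factor as $\alpha_{n+m}=\alpha_n\vee\sigma^{-n}(\alpha_m)$ after checking indices carefully, since $\bigvee_{i=n}^{n+m}\sigma^{-i}(\alpha)=\sigma^{-n}\big(\bigvee_{i=0}^{m}\sigma^{-i}(\alpha)\big)=\sigma^{-n}(\alpha_m)$ and the first block $\bigvee_{i=0}^{n}\sigma^{-i}(\alpha)$ is $\alpha_n$ with the $i=n$ term shared. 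I would then apply Item~1 with the pair $(\alpha_n,Y_n)$ and $(\sigma^{-n}(\alpha)_m,\sigma^{-n}(Y)_m)$, combined with Item~2, to get $N(\alpha_{n+m},Y_{n+m})\le N(\alpha_n,Y_n)\,N(\sigma^{-n}(\alpha)_m,\sigma^{-n}(Y)_m)\le N(\alpha_n,Y_n)N(\alpha_m,Y_m)$; here one must verify that $Y_{n+m}=Y_n\cap\sigma^{-n}(Y)_m$, which is immediate from the definition of $Y_n$. Item~4 is the easiest: if $Y\subseteq Z$, then $Y_n\subseteq Z_n$, so any subcover of $\beta_n$ that covers $Z_n$ also covers the smaller set $Y_n$, hence $N(\beta_n,Y_n)\le N(\beta_n,Z_n)$.

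The main obstacle I anticipate is purely bookkeeping rather than conceptual: getting the index ranges in the join decomposition of $\alpha_{n+m}$ to line up exactly, so that the factorization $\alpha_{n+m}=\alpha_n\vee\sigma^{-n}(\alpha_m)$ (with the overlap at index $n$) is justified, and confirming the matching identity $Y_{n+m}=Y_n\cap\sigma^{-n}(Y_m)$ for the underlying sets. Once these identities are pinned down, Items~2 and~3 reduce to formal applications of Item~1. I would also handle the infinite cases throughout by the convention that any inequality $N\le\infty$ holds trivially, so no separate argument is needed when a minimal subcover fails to be finite.
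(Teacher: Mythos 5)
Your proposal is correct, and for Items 1, 2 and 4 it coincides with the paper's own argument (products of minimal subcovers for the join; preimages of a minimal subcover for Item~2, via the identities $\sigma^{-n}(\alpha)_m=\sigma^{-n}(\alpha_m)$ and $\sigma^{-n}(Y)_m=\sigma^{-n}(Y_m)$; and monotonicity of the covering number in the covered set for Item~4). The genuine difference is in Item~3, and it is in your favor. The paper splits $\alpha_{n+m}$ into the \emph{disjoint} blocks $[\alpha\vee\cdots\vee\sigma^{-n}(\alpha)]\vee[\sigma^{-n-1}(\alpha)\vee\cdots\vee\sigma^{-n-m}(\alpha)]$ and identifies the second block with $\sigma^{-n-1}(\alpha)_m$; under the paper's own convention that $\gamma_m$ runs over the indices $0,\ldots,m$, that block is actually $\sigma^{-n-1}(\alpha)_{m-1}$, so the disjoint splitting literally yields $N(\alpha_{n+m},Y_{n+m})\leq N(\alpha_n,Y_n)N(\alpha_{m-1},Y_{m-1})$ (equivalently, it bounds $N(\alpha_{n+m+1},Y_{n+m+1})$ by the stated product); since $Y_{m-1}\supseteq Y_m$, this cannot in general be upgraded to the stated inequality, although it still gives the subadditivity needed for Proposition~\ref{limitexists}. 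Your \emph{overlapping} splitting, with blocks $0,\ldots,n$ and $n,\ldots,n+m$, is exactly what produces the stated bound with $N(\alpha_m,Y_m)$ on the right. The one step you must add: the asserted identity $\alpha_{n+m}=\alpha_n\vee\sigma^{-n}(\alpha_m)$ is not an equality of covers, because the shared index $n$ makes the join contain sets $(A_0\cap\cdots\cap A_n)\cap(B_n\cap\cdots\cap B_{n+m})$ with $A_n\neq B_n$; the join is a strictly finer cover that contains $\alpha_{n+m}$ as a subfamily. Repairing this costs one line: each such set lies inside $A_0\cap\cdots\cap A_{n-1}\cap B_n\cap\cdots\cap B_{n+m}\in\alpha_{n+m}$, so replacing the members of a minimal subcover of the join by these supersets gives $N(\alpha_{n+m},Y_{n+m})\leq N(\alpha_n\vee\sigma^{-n}(\alpha)_m,Y_{n+m})$, after which Item~1 (using $Y_{n+m}=Y_n\cap\sigma^{-n}(Y)_m$, which you verify) and Item~2 complete the argument.
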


\begin{proof} We begin with the proof of the first item. Notice that the inequality $$N(\alpha \vee \beta, Y\cap Z)\leq N(\alpha,Y)N(\beta,Z)$$ is obviously true if $N(\alpha,Y)=\infty$ or $N(\beta,Z)=\infty$. So, we may suppose that $N(\alpha,Y)<\infty$ and $N(\beta,Z)<\infty$. Let $N(\alpha,Y)=p$, $N(\beta,Z)=q$, and let $\{A_1,\ldots ,A_p\}$ and $\{B_1,\ldots ,B_q\}$ be subcovers of $\alpha$ and $\beta$, respectively. Then, $\{A_i\cap B_j:1\leq i\leq p, 1\leq j\leq q\}$, whose cardinality is less or equal to $N(\alpha,Y)N(\beta,Z)$,  is a subcover of $\alpha\vee \beta$. Therefore, $N(\alpha\vee\beta,Y\cap Z)\leq N(\alpha,Y)N(\beta,Z)$.

Next, we prove the second item.
Notice that for each $B\in \alpha_m$, $B$ has the form $$B=B_0\cap B_1\cap\ldots \cap B_m,$$ where $B_i\in \sigma^{-i}(\alpha)$ for each $i\in \{0,\ldots ,m\}$. Then, $\sigma^{-n}(B_i)\in \sigma^{-n}(\sigma^{-i}(\alpha))=\sigma^{-i}(\sigma^{-n}(\alpha))$ for each $i\in \{0,\ldots ,m\}$, and so $$\sigma^{-n}(B)=\sigma^{-n}(B_0\cap B_1\cap \ldots  \cap B_m)=\sigma^{-n}(B_0)\cap \sigma^{-n}(B_1)\cap\ldots \cap \sigma^{-n}(B_m),$$
which belongs to  $\sigma^{-n}(\alpha)\vee  \sigma^{-1}(\sigma^{-n}(\alpha))\vee\ldots \vee \sigma^{-m}(\sigma^{-n}(\alpha))=\sigma^{-n}(\alpha)_m.$
Therefore, for each $B\in \alpha_m$, we have that $\sigma^{-n}(B)\in \sigma^{-n}(\alpha)_m$.

Now, we prove the desired inequality.
If $N(\alpha_m,Y_m)=\infty$ we are done. So, suppose that $N(\alpha_m,Y_m)=p<\infty$. 
Let $\{A_1,\ldots ,A_p\}$ be a subcover of $\alpha_m$. Then, $$Y\cap \sigma^{-1}(Y)\cap \ldots \cap \sigma^{-m}(Y) \subseteq \bigcup\limits_{i=1}^pA_i,$$ from where we conclude that
$$\sigma^{-n}(Y)\cap \sigma^{-n-1}(Y)\cap \ldots \cap \sigma^{-n-m}(Y) \subseteq \bigcup\limits_{i=1}^p\sigma^{-n}(A_i).$$
Since $A_i \in \alpha_m$, from the paragraph above, we obtain that $\sigma^{-n}(A_i)\in \sigma^{-n}(\alpha)_m$. Therefore, $ \{\sigma^{-n}(A_1),\ldots ,\sigma^{-n}(A_p)\}$ 
is a subcover of $\sigma^{-n}(\alpha)_m$ and hence  $N(\sigma^{-n}(\alpha)_m,\sigma^{-n}(Y)_m)\leq N(\alpha_m,Y_m)$ as desired.

Now we prove the third item. Write 

$$\alpha_{n+m}=[\alpha\vee \sigma^{-1}(\alpha)\vee\ldots \vee \sigma^{-n}(\alpha)]\vee [\sigma^{-n-1}(\alpha)\vee \ldots  \vee \sigma^{-n-m}(\alpha)]=$$
$$=\alpha_n \vee \sigma^{-n-1}(\alpha)_m.$$ 
From the first item, we obtain that $N(\alpha_{n+m},Y_{n+m})\leq N(\alpha_n,Y_n)N(\sigma^{-n-1}(\alpha)_m,\sigma^{-n-1}(Y)_m)$ and, from the second item, we have that  $N(\sigma^{-n-1}(\alpha)_m,\sigma^{-n-1}(Y)_m)\leq N(\alpha_m,Y_m)$. Hence, $N(\alpha_{n+m},Y_{n+m})\leq N(\alpha_n,Y_n)N(\alpha_m,Y_m)$.

To prove the last item, let $\beta$ be a cover of $Z$ and, for a fixed $n\in \N$, consider the cover $\beta_n$ of $Z_n$. Let $\gamma$ be a subcover of $\beta_n$. Since $Y\subseteq Z$, we have that $Y_n\subseteq Z_n$, and hence $\gamma$ also covers $Y_n$. Therefore, $N(\beta_n,Y_n)\leq N(\beta_n,Z_n)$.
\end{proof}

To define the entropy via covers of a Deaconu-Renault system $(X,\sigma)$,  we will deal with compact sets $K$ in $X$. But even when $K$ is compact, $K_n$ may not be compact, for some $n\in \N$. For example, consider $(\R,\sigma)$, where $\R$ has the usual topology and $\sigma:(0,\infty)\rightarrow \R$ is given by $\sigma(x)=x$. For the compact set $K=[0,1]$, we get $K_n=(0,1]$ for each $n\geq 1$, which is not compact. Despite this, according to the following lemma, for each compact set $K$ in a Deaconu-Renault system and each open cover $\alpha$ of $K$, the cover $\alpha_n$ of $K_n$ has a finite subcover.

\begin{lemma}\label{lemmacover}
Let $(X,\sigma)$ be a Deaconu-Renault system, let $K\subseteq X$ be compact, and let $\alpha$ be an open cover of $K$. Then, for each $n\in \N$, $\alpha_n$ is an open cover of $K_n$, and $\alpha_n$ has a finite subcover.
\end{lemma}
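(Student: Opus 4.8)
The plan is to avoid trying to use compactness of $K_n$ directly---which would fail, since the example preceding the lemma shows that $K_n$ need not be compact---and instead to exploit the compactness of $K$ together with the product structure of the join $\alpha_n$.

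First I would dispense with the assertion that $\alpha_n$ is an open cover of $K_n$. Since $\sigma$ is a local homeomorphism, each iterate $\sigma^i$ is continuous on the open set $Dom(\sigma^i)$, so for every $A\in\alpha$ the set $\sigma^{-i}(A)$ is open in $X$; hence $\sigma^{-i}(\alpha)$ is an open cover of $\sigma^{-i}(K)$, and by the remark on joins, $\alpha_n$ is an open cover of $K_n$.

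For the finite subcover, I would first use compactness of $K$: since $\alpha$ is an open cover of the compact set $K$, there is a finite subcover $\{A_1,\ldots,A_m\}\subseteq\alpha$ of $K$. Then I would consider the finite family
$$\mathcal{C}=\left\{\bigcap_{j=0}^{n}\sigma^{-j}(A_{i_j}):i_0,\ldots,i_n\in\{1,\ldots,m\}\right\},$$
which consists of at most $m^{n+1}$ sets, each of them an element of $\alpha_n$ by definition of the join. The claim is that $\mathcal{C}$ covers $K_n$. Take $x\in K_n$; then for every $j\in\{0,1,\ldots,n\}$ we have $x\in\sigma^{-j}(K)$, which means $x\in Dom(\sigma^j)$ and $\sigma^j(x)\in K$. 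Since $\{A_1,\ldots,A_m\}$ covers $K$, there is an index $i_j$ with $\sigma^j(x)\in A_{i_j}$, and therefore $x\in\sigma^{-j}(A_{i_j})$. Intersecting over $j$ gives $x\in\bigcap_{j=0}^{n}\sigma^{-j}(A_{i_j})\in\mathcal{C}$, so $\mathcal{C}$ is the desired finite subcover.

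The only point requiring care---and the reason the possible noncompactness of $K_n$ is not an obstruction---is the handling of domains: membership $x\in K_n$ already guarantees $x\in Dom(\sigma^j)$ for each $j\leq n$, so every $\sigma^j(x)$ is defined and lies in $K$. This is precisely what lets the finitely many sets $A_1,\ldots,A_m$ chosen from the compact set $K$ produce, via the join, a finite subcover of the possibly noncompact $K_n$.
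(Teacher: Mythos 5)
Your proof is correct and follows essentially the same route as the paper: openness of $\sigma^{-i}(A)$ via continuity of $\sigma^i$ on the open set $Dom(\sigma^i)$, then a finite subcover $\{A_1,\ldots,A_m\}$ of $K$ pulled back and joined to cover $K_n$. The only difference is that you spell out explicitly the final step (that the $m^{n+1}$ joined intersections cover $K_n$, using that $x\in K_n$ guarantees $\sigma^j(x)\in K$ for each $j$), which the paper leaves implicit.
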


\begin{proof}
 Let $K\subseteq X$ be a compact set, $\alpha$ be an open cover of $K$, and let $i\in \N$. For $A\in \alpha$, since $A$ is open in $X$, we have that $\sigma^{-i}(A)$ is an open set in $Dom(\sigma^i)$ and, since $Dom(\sigma^i)$ is open in $X$, we have that $\sigma^{-i}(A)$ is open in $X$. Therefore, $\sigma^{-i}(\alpha)$ is an open cover (in $X$) of $\sigma^{-i}(K)$, and so $\alpha_n$ is an open cover (in $X$) of $K_n$ for each $n\in \N$. Since $K$ is compact, there exists a finite subcover $\{A_1,\ldots ,A_m\}$ of $\alpha$. Then, $\{\sigma^{-i}(A_1),\ldots ,\sigma^{-i}(A_m)\}$ is a finite subcover of $\sigma^{-i}(\alpha)$, covering $\sigma^{-i}(K)$ and hence $\alpha_n$ has a finite subcover. 
\end{proof}

We need one last result before we can define entropy via covers of a Deaconu-Renault system.

\begin{proposition}\label{limitexists}
Let $(X,\sigma)$ be a Deaconu-Renault system, $K\subseteq X$ compact and $\alpha$ an open cover of $K$. Then, $$\lim\limits_{n\rightarrow \infty}\frac{1}{n}H(\alpha_n,K_n)$$ exists.
\end{proposition}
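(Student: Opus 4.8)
The plan is to recognize this as a standard subadditivity (Fekete) argument, with the single extra wrinkle that we must first guarantee we are working with honest finite real numbers rather than elements of $\overline{\R}$. Set $a_n := H(\alpha_n, K_n)$. First I would observe that every $a_n$ is a finite nonnegative real: by Lemma~\ref{lemmacover}, the open cover $\alpha_n$ of $K_n$ admits a finite subcover, so $N(\alpha_n, K_n) < \infty$ and hence $a_n = \log N(\alpha_n, K_n) \in [0, +\infty)$. In particular we never enter the extended-real case, and the degenerate situation $K_n = \emptyset$ (where the convention gives $N(\alpha_n, K_n) = 1$ and thus $a_n = 0$, with $a_m = 0$ for all $m \geq n$) is harmlessly absorbed into this.

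Next I would establish subadditivity of $(a_n)_{n\in\N}$. Taking $Y = K$ in item~3 of Lemma~\ref{lemma02} gives
\[
N(\alpha_{m+n}, K_{m+n}) \leq N(\alpha_n, K_n)\, N(\alpha_m, K_m)
\]
for all $m, n \in \N$. Applying the (monotone) logarithm and using $\log(xy) = \log x + \log y$ yields
\[
a_{m+n} \leq a_n + a_m ,
\]
so $(a_n)_{n \in \N}$ is a subadditive sequence of nonnegative real numbers.

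Finally, I would invoke Fekete's subadditive lemma: for any subadditive sequence $(a_n)$ of real numbers, the limit $\lim_{n \to \infty} \tfrac{a_n}{n}$ exists and equals $\inf_{n} \tfrac{a_n}{n}$. Since each $a_n \geq 0$, this infimum lies in $[0, +\infty)$, so the limit exists and is in fact finite. This is precisely the assertion that $\lim_{n\to\infty} \tfrac{1}{n} H(\alpha_n, K_n)$ exists, completing the proof.

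I do not expect a genuine obstacle here beyond bookkeeping; the only delicate point is ensuring we manipulate finite reals rather than elements of $\overline{\R}$, and this is exactly what the compactness of $K$ provides via Lemma~\ref{lemmacover}. Were $K$ not compact, $N(\alpha_n, K_n)$ could be infinite for some $n$, the quantity $a_n$ would be $+\infty$, and the clean subadditive-limit argument would require reworking (or would simply force the limit to be $+\infty$); the compactness hypothesis is what keeps the standard Fekete argument applicable verbatim.
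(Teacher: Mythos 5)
Your proposal is correct and follows essentially the same route as the paper: finiteness of $N(\alpha_n,K_n)$ from Lemma~\ref{lemmacover}, subadditivity of $H(\alpha_n,K_n)$ from item~3 of Lemma~\ref{lemma02}, and then the subadditive-limit theorem (the paper cites \cite[Theorem 4.9]{Walters}, which is exactly Fekete's lemma). Your extra remarks on nonnegativity, finiteness of the limit, and the empty-$K_n$ convention are harmless additions to the same argument.
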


\begin{proof} By Lemma \ref{lemmacover}, 
we have that $N(\alpha_n,K_n)<\infty$ for each $n\in \N$. 
From the third item in Lemma~\ref{lemma02}, we obtain that $H(\alpha_{n+m},K_{n+m})\leq H(\alpha_n,K_n)+H(\alpha_m,K_m)$ for each $n,m\in \N$. With this, we can now apply  \cite[Theorem 4.9]{Walters} and conclude that $\lim\limits_{n\rightarrow \infty}\frac{1}{n}H(\alpha_n,K_n)$ exists.
\end{proof}

\begin{definition}\label{defofentropyviacovers} Let $(X,\sigma)$ be a Deaconu-Renault system. 
\begin{itemize}
    \item For a compact set $K\subseteq X$ and an open cover $\alpha$ of $K$, the \textbf{entropy of $\sigma$ relative to the open cover $\alpha$ of $K$} is defined by:
    $$h(\alpha,\sigma,K)=\lim\limits_{n\rightarrow \infty}\frac{1}{n}H(\alpha_n,K_n).$$

\item For a compact set $K\subseteq X$, the \textbf{entropy of $\sigma$ relative to $K$} is defined by:
    $$h(\sigma,K)=\sup\limits_{\alpha}h(\alpha,\sigma,K),$$
    where the supremum is taken over all the open covers $\alpha$ of $K$.
    
\item The \textbf{(topological) entropy of $\sigma$} is defined by
$$h(\sigma)=\sup\limits_{K}h(\sigma, K),$$
where the supremum is taken over all the compact sets $K\subseteq X$. 
\end{itemize}
\end{definition}

\begin{remark} Notice that if $K$ is a compact set in $X\setminus Dom(\sigma)$, then $K\cap \sigma^{-1}(K)=\emptyset$ and therefore $K_n=\emptyset$ for each $n\geq 1$. Then, for each open cover $\alpha$ of $K$ and for each $n\geq 1$, we have that  $N(\alpha_n,K_n)=1$ and hence $H(\alpha_n,K_n)=0$. Consequently,  $h(\sigma, K)=0$. So, in Definition~\ref{defofentropyviacovers}, the supremum can be taken over all the compact sets $K$ with a nonempty intersection with $Dom(\sigma)$. 
\end{remark}

For Deaconu-Renault systems such that $Dom(\sigma)$ is a closed set in $X$, we prove below that the last supremum in Definition~\ref{defofentropyviacovers} can be taken over all the compact sets contained in $Dom(\sigma)$, instead of over all compact sets. 

\begin{proposition}\label{equiv01} Let $(X,\sigma)$ be a Deaconu-Renault system with $Dom(\sigma)$ closed in $X$. Then, in Definition \ref{defofentropyviacovers}, we can take the supremum over all the compact sets $L\subseteq Dom(\sigma)$, instead of over all the compact sets $K\subseteq X$.
\end{proposition}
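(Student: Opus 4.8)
The plan is to show that for every compact set $K\subseteq X$ one has $h(\sigma,K)\le h(\sigma,L)$, where $L:=K\cap Dom(\sigma)$. Since $Dom(\sigma)$ is closed, $L$ is a closed subset of the compact set $K$, hence compact, and $L\subseteq Dom(\sigma)$. As the family of compact subsets of $Dom(\sigma)$ is contained in the family of all compact subsets of $X$, the inequality $\sup_{L\subseteq Dom(\sigma)}h(\sigma,L)\le \sup_{K\subseteq X}h(\sigma,K)$ is automatic; the reduction above supplies the reverse inequality and hence the desired equality. Because $L\subseteq K$, every open cover $\alpha$ of $K$ is also an open cover of $L$, so it suffices to prove $h(\alpha,\sigma,K)\le h(\alpha,\sigma,L)$ for each such $\alpha$.

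First I would record the relevant set identities. Using that $\sigma^{-i}(Dom(\sigma))=Dom(\sigma^{i+1})$ together with the fact that the domains $Dom(\sigma^{j})$ are nested decreasingly, one computes
$$L_m=\bigcap_{i=0}^m\sigma^{-i}\big(K\cap Dom(\sigma)\big)=K_m\cap Dom(\sigma^{m+1}).$$
Taking $m=n-1$ and intersecting with $\sigma^{-n}(K)\subseteq Dom(\sigma^{n})$ gives the key decomposition $K_n=L_{n-1}\cap\sigma^{-n}(K)$; in particular $K_n\subseteq L_{n-1}$.

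The counting step is then immediate from Lemma~\ref{lemma02}. Writing $\alpha_n=\alpha_{n-1}\vee\sigma^{-n}(\alpha)$ and applying the first item of Lemma~\ref{lemma02} to the decomposition $K_n=L_{n-1}\cap\sigma^{-n}(K)$ (note that $\alpha_{n-1}$ covers $K_{n-1}\supseteq L_{n-1}$ and $\sigma^{-n}(\alpha)$ covers $\sigma^{-n}(K)$) yields
$$N(\alpha_n,K_n)\le N(\alpha_{n-1},L_{n-1})\,N(\sigma^{-n}(\alpha),\sigma^{-n}(K))\le N(\alpha_{n-1},L_{n-1})\,N(\alpha,K).$$
Set $c:=N(\alpha,K)$, which is finite since $K$ is compact and $\alpha$ is an open cover. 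Passing to $H=\log N$ and dividing by $n$ gives $\tfrac1n H(\alpha_n,K_n)\le \tfrac1n H(\alpha_{n-1},L_{n-1})+\tfrac{\log c}{n}$. Since the limits defining $h(\alpha,\sigma,K)$ and $h(\alpha,\sigma,L)$ exist by Proposition~\ref{limitexists}, letting $n\to\infty$ — so that $\tfrac{\log c}{n}\to 0$ and $\tfrac1n H(\alpha_{n-1},L_{n-1})\to h(\alpha,\sigma,L)$ — gives $h(\alpha,\sigma,K)\le h(\alpha,\sigma,L)\le h(\sigma,L)$. Taking the supremum over all open covers $\alpha$ of $K$ yields $h(\sigma,K)\le h(\sigma,L)$, as required.

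The main obstacle to watch is the \emph{direction} of the inequality: since $L\subseteq K$, the naive monotonicity furnished by the fourth item of Lemma~\ref{lemma02} gives $h(\alpha,\sigma,L)\le h(\alpha,\sigma,K)$, the opposite of what is needed. The resolution is that the points of $K\setminus Dom(\sigma)$ leave $K$ after a single application of $\sigma$, so they affect only $K_0=K$ and not $K_n$ for $n\ge 1$; this is exactly captured by the index shift $K_n\subseteq L_{n-1}$, which lets one compare the level-$n$ data for $K$ with the level-$(n-1)$ data for $L$. The only price is the bounded multiplicative factor $N(\alpha,K)$, which disappears after dividing by $n$ and therefore does not alter the exponential growth rate.
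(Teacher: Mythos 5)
Your proof is correct, and it takes a genuinely different route from the paper's. The paper passes from $K$ to the compact set $L'=K\cap\sigma^{-1}(K)$ (compact because $\sigma^{-1}(K)$ is closed in $Dom(\sigma)$, hence in $X$, which is where the closedness hypothesis enters there), establishes the exact identity $L'_n=K_{n+1}$, and then, given an open cover $\alpha$ of $K$, manufactures the auxiliary refined cover $\beta=\{A\cap\sigma^{-1}(B):A,B\in\alpha\}$ of $L'$ and verifies that every element of $\beta_n$ is contained in an element of $\alpha_{n+1}$; this yields the exact count $N(\alpha_{n+1},K_{n+1})\le N(\beta_n,L'_n)$ and hence $h(\alpha,\sigma,K)\le h(\beta,\sigma,L')$ with no error term. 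You instead work with $L=K\cap Dom(\sigma)$ (closedness of $Dom(\sigma)$ gives compactness of $L$ directly), prove the decomposition $K_n=L_{n-1}\cap\sigma^{-n}(K)$ via $L_m=K_m\cap Dom(\sigma^{m+1})$, and reuse the \emph{same} cover $\alpha$ for both sets, invoking submultiplicativity (Lemma~\ref{lemma02}, item 1) to get $N(\alpha_n,K_n)\le N(\alpha_{n-1},L_{n-1})\,N(\alpha,K)$, the constant $N(\alpha,K)$ being killed by the $\tfrac{1}{n}$ normalization. Both arguments rest on the same index-shift idea (points of $K\setminus Dom(\sigma)$ matter only at level zero), but yours avoids constructing a second cover at the price of a bounded multiplicative factor, which is harmless for growth rates, while the paper's version is exact at the price of the tailor-made cover $\beta$ and the containment check. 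One small point worth making explicit: your bound $N(\sigma^{-n}(\alpha),\sigma^{-n}(K))\le N(\alpha,K)$ is the $m=0$ instance of Lemma~\ref{lemma02}, item 2, which the paper states for $m\in\N=\{1,2,\dots\}$; either note that its proof works verbatim for $m=0$, or argue directly by pushing a minimal finite subcover of $K$ through $\sigma^{-n}$.
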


\begin{proof} 
All we need to prove is that 
$$\sup\{h(\sigma, K):K\text{ is a compact set in } X\}\leq \sup\{h(\sigma, L):L\text{ is a compact set in } Dom(\sigma)\},$$ since the other inequality is straightforward. 

Let $K\subseteq X$ be a compact set. Since $K$ is compact, $\sigma^{-1}(K)$ is closed in $Dom(\sigma)$ and hence closed in $X$ (since $Dom(\sigma)$ is closed in $X$). Therefore, $L=K\cap \sigma^{-1}(K)$ is compact.

Notice that for each $n\in \N$, 
\begin{eqnarray*}L_n &=& L\cap \sigma^{-1}(L)\cap\ldots \cap \sigma^{-n}(L)\\
&=&(K\cap \sigma^{-1}(K))\cap \sigma^{-1}(K\cap \sigma^{-1}(K))\cap\ldots \cap \sigma^{-n}(K\cap \sigma^{-1}(K)) \\
&=&K\cap \sigma^{-1}(K)\cap\ldots \cap \sigma^{-n-1}(K)=K_{n+1}.
\end{eqnarray*}

Let $\alpha$ be an open cover of $K$, and define $\beta=\{A\cap \sigma^{-1}(B):A,B\in \alpha\}$, which is an open cover of $L$. For $n\in \N$, each element of $\beta_n$ is of the form 
$$\left(A_0\cap \sigma^{-1}(B_1)\right)\cap \sigma^{-1}\left(A_1\cap \sigma^{-1}(B_2)\right)\cap\sigma^{-2}\left(A_2\cap \sigma^{-1}(B_3)\right)\ldots \cap \sigma^{-n}\left(A_{n}\cap \sigma^{-1}(B_n)\right)=$$
$$=A_0\cap \sigma^{-1}\left(B_1\cap A_1\right)\cap \sigma^{-2}\left(B_2\cap A_2\right)\cap \ldots \cap \sigma^{-n}\left(B_n\cap A_n\right)\cap \sigma^{-n-1}(B_n),$$ where $A_i,B_j\in \alpha$, and hence is a subset of 
$$=A_0\cap \sigma^{-1}\left(A_1\right)\cap \sigma^{-1}\left(A_2\right)\cap \ldots \cap \sigma^{-n}\left(A_n\right) \cap\sigma^{-n-1}(B_n),$$ which is an element of $\alpha_{n+1}$. So, we have shown that each element of $\beta_n$ is a subset of an element of $\alpha_{n+1}$.

Let $\gamma$ be a subcover of $\beta_n$ of minimal cardinality, covering $L_n$, and choose, for each $U\in \gamma$, an element $V_U\in \alpha_{n+1}$ such that $U\subseteq V_U$. Since $K_{n+1}=L_n$, we have that $\{V_U:U\in \gamma\}$ is a subcover of $\alpha_{n+1}$, which covers $K_{n+1}$. Therefore, $N(\alpha_{n+1},K_{n+1})\leq N(\beta_n,L_n)$ for each $n\in \N$ and so
\begin{eqnarray*} 
h(\alpha,\sigma, K)&=&\lim\limits_{n\rightarrow \infty}\frac{1}{n+1}H(\alpha_{n+1}, K_{n+1})=\lim\limits_{n\rightarrow \infty}\frac{n}{n+1}\frac{1}{n}H(\alpha_{n+1}, K_{n+1})\\
&\leq& \lim\limits_{n\rightarrow \infty}\frac{1}{n}H(\beta_n, L_n)=h(\beta,\sigma, L).
\end{eqnarray*}

We have proved above that for each open cover $\alpha$ of $K$ there exists an open cover $\beta$ of $L$ such that $h(\alpha,\sigma, K)\leq h(\beta,\sigma, L)$. Therefore, 
$$\sup\{h(\alpha, \sigma,K): \alpha \text{ is an open cover of }K\}\leq\sup\{h(\beta, \sigma,L): \beta \text{ is an open cover of }L\},$$ which means that $h(\sigma, K)\leq h(\sigma, L)$. 

Wrapping it up, we have proved that for each compact set $K\subseteq X$, there exists a compact set $L\subseteq Dom(\sigma)$ such that $h(\sigma, K)\leq h(\sigma, L)$ and therefore,
$$\sup\{h(\sigma, K):K\text{ is a compact set in }X\}\leq \sup\{h(\sigma, L):L\text{ is a compact set in }Dom(\sigma)\},$$ as desired.

\end{proof}

Next, we show that the entropy of $\sigma$ relative to compact sets is an increasing function.

\begin{lemma}\label{entropia crescente} Let $(X,\sigma)$ be a Deaconu-Renault system and let $K$ and $C$ be compact subsets of $X$ such that $K\subseteq C$. Then, $h(\sigma, K)\leq h(\sigma,C)$.
\end{lemma}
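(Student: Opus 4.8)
The plan is to reduce the claim to the monotonicity already recorded in the fourth item of \cref{lemma02}, after enlarging an arbitrary open cover of $K$ to an open cover of $C$. Since $h(\sigma,K)=\sup_\alpha h(\alpha,\sigma,K)$ over open covers $\alpha$ of $K$, it suffices to exhibit, for each such $\alpha$, an open cover of $C$ whose relative entropy dominates $h(\alpha,\sigma,K)$.

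First I would set $\alpha'=\alpha\cup\{X\setminus K\}$. Because $X$ is Hausdorff and $K$ is compact, $K$ is closed, so $X\setminus K$ is open; hence $\alpha'$ is a genuine open cover of all of $X$, and in particular of $C$. The key step is then the inequality $N(\alpha_n,K_n)\le N(\alpha'_n,K_n)$ for every $n$. Every member of $\alpha'_n$ has the form $\bigcap_{i=0}^{n}\sigma^{-i}(B_i)$ with each $B_i\in\alpha'$. If some $B_i=X\setminus K$, then this member meets $K_n$ emptily: any $x\in K_n$ lies in $\sigma^{-i}(K)$, so $\sigma^i(x)\in K$, and hence $x\notin\sigma^{-i}(X\setminus K)$. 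Consequently, in any subcover of $\alpha'_n$ covering $K_n$, the members using the slack set $X\setminus K$ are superfluous and may be deleted; what remains is a subcover consisting solely of sets $\bigcap_{i}\sigma^{-i}(B_i)$ with all $B_i\in\alpha$, i.e.\ a subcover of $\alpha_n$ covering $K_n$, of no larger cardinality. This gives the claimed inequality.

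Combining this with the fourth item of \cref{lemma02} (applied with $Y=K\subseteq C=Z$ and the cover $\alpha'$), I obtain $N(\alpha_n,K_n)\le N(\alpha'_n,K_n)\le N(\alpha'_n,C_n)$, and therefore $H(\alpha_n,K_n)\le H(\alpha'_n,C_n)$ for all $n$. Dividing by $n$ and letting $n\to\infty$, with the limits existing by \cref{limitexists}, yields $h(\alpha,\sigma,K)\le h(\alpha',\sigma,C)\le h(\sigma,C)$. Taking the supremum over all open covers $\alpha$ of $K$ then gives $h(\sigma,K)\le h(\sigma,C)$, as desired.

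The main obstacle I anticipate is precisely the middle step: verifying carefully that the members of $\alpha'_n$ involving $X\setminus K$ contribute nothing over $K_n$, which rests on the observation that $x\in K_n$ forces $\sigma^i(x)\in K$ for every relevant $i$. Once that is in hand, the remainder is routine bookkeeping with the submultiplicativity and monotonicity facts already established in \cref{lemma02} and the existence of the defining limit from \cref{limitexists}.
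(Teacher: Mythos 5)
Your proposal is correct and follows essentially the same route as the paper's proof: the paper also enlarges $\alpha$ to $\beta=\alpha\cup\{K^c\}$, shows that members of $\beta_n$ involving $\sigma^{-j}(K^c)$ miss $K_n$ entirely (so a minimal subcover of $\beta_n$ over $K_n$ lies in $\alpha_n$), and then invokes the fourth item of \cref{lemma02} to pass from $K_n$ to $C_n$. The only cosmetic difference is that the paper phrases the deletion of the superfluous members as a contradiction with minimal cardinality, whereas you delete them directly; the content is identical.
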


\begin{proof} To prove the desired inequality, it is enough to show that for each open cover $\alpha$ of $K$ there exists an open cover $\beta$ of $C$ such that $h(\alpha, \sigma, K)\leq h(\beta,\sigma, C)$, which in turn follows if the covers are such that $N(\alpha_n,K_n)\leq N(\beta_n,C_n)$ for each $n\in \N$. 

Let $\alpha$ be an open cover of $K$. Define $\beta=\alpha\cup\{K^c\}$ (where $K^c=X\setminus K$), and notice that $\beta$ is an open cover of $C$. We show that, for each $n\in \N$, $N(\alpha_n,K_n)\leq N(\beta_n, C_n)$. Fix $n\in \N$. Notice that $\beta$ is also an open cover of $K$ and let $\gamma$ be a subcover of $\beta_n$, covering $K_n$, with cardinality equal to $N(\beta_n,K_n)$ (which is finite by Lemma \ref{lemmacover}). Let $A=A_0\cap A_1\cap ..\cap A_n$, where $A_i\in \sigma^{-i}(\beta)$ for each $i\in \{0,1,\ldots ,n\}$, be an element of $\gamma$.
Suppose that $A_j=\sigma^{-j}(K^c)$ for some $j\in \{0,1,\ldots ,n\}$. Then, $A_j\cap \sigma^{-j}(K)=\sigma^{-j}(K^c)\cap \sigma^{-j}(K)=\emptyset$, and hence 
$$A\cap K_n=\left(A_0\cap A_1\cap \ldots \cap A_n\right)\bigcap\left( K\cap \sigma^{-1}(K)\cap \ldots \cap \sigma^{-n}(K)\right)=\emptyset,$$ which is impossible since $\gamma$ covers $K_n$ and has minimal cardinality. We conclude that $A_i\in \sigma^{-i}(\alpha)$ for each $i\in \{0,1,\ldots ,n\}$, and hence $A\in \alpha_n$. Therefore, $\gamma\subseteq \alpha_n$ and so  $N(\alpha_n,K_n)\leq N(\beta_n,K_n)$. By the last item of Lemma~\ref{lemma02}, we have that $N(\beta_n,K_n)\leq N(\beta_n,C_n)$ and hence $N(\alpha_n,K_n)\leq N(\beta_n,C_n)$ as desired.
\end{proof}

\begin{corollary}
Let $(X,\sigma)$ be a Deaconu-Renault system. If $Dom(\sigma)$ is compact, then $$h(\sigma)=\sup\limits_\alpha\{ h(\alpha,\sigma, Dom(\sigma))\},$$ where the supremum is taken over all the open covers of $Dom(\sigma)$.
\end{corollary}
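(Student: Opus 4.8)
The plan is to reduce the statement to the two facts already in hand: that when $Dom(\sigma)$ is closed the outer supremum defining $h(\sigma)$ may be taken over compact subsets of $Dom(\sigma)$ (Proposition~\ref{equiv01}), and that $h(\sigma,-)$ is monotone with respect to inclusion of compact sets (Lemma~\ref{entropia crescente}). Once these are in place the corollary is essentially bookkeeping.

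First I would record the one point that needs checking: that $Dom(\sigma)$ is closed. By hypothesis $X$ is a locally compact Hausdorff space, and in a Hausdorff space every compact subset is closed; since $Dom(\sigma)$ is assumed compact, it is therefore closed in $X$. This is exactly the hypothesis required to apply Proposition~\ref{equiv01}, and it is the only mildly subtle step in the argument.

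Applying Proposition~\ref{equiv01} I can then rewrite
$$h(\sigma)=\sup\{h(\sigma,L):L\subseteq Dom(\sigma)\text{ is compact}\}.$$
For any such $L$ we have $L\subseteq Dom(\sigma)$ with $Dom(\sigma)$ itself compact, so Lemma~\ref{entropia crescente} gives $h(\sigma,L)\leq h(\sigma,Dom(\sigma))$. Taking the supremum over all compact $L\subseteq Dom(\sigma)$ yields $h(\sigma)\leq h(\sigma,Dom(\sigma))$. The reverse inequality is immediate, since $Dom(\sigma)$ is itself one of the admissible sets $L$ in the supremum, whence $h(\sigma,Dom(\sigma))\leq h(\sigma)$. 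Thus $h(\sigma)=h(\sigma,Dom(\sigma))$.

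Finally, by the definition of the entropy relative to a compact set in Definition~\ref{defofentropyviacovers}, we have $h(\sigma,Dom(\sigma))=\sup_{\alpha}h(\alpha,\sigma,Dom(\sigma))$, the supremum being over all open covers $\alpha$ of $Dom(\sigma)$. Combining this with the previous paragraph gives the claimed identity. I do not anticipate any genuine obstacle: apart from recording the Hausdorff-compact-implies-closed observation, the corollary is just a packaging of Proposition~\ref{equiv01} together with the monotonicity in Lemma~\ref{entropia crescente}.
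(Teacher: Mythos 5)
Your proof is correct and follows exactly the route the paper intends: the paper's one-line proof simply cites Proposition~\ref{equiv01} and Lemma~\ref{entropia crescente}, and your argument (including the observation that compactness of $Dom(\sigma)$ in the Hausdorff space $X$ gives the closedness hypothesis needed for Proposition~\ref{equiv01}) is the natural unpacking of that citation.
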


\begin{proof} The proof follows from Proposition \ref{equiv01} and Lemma \ref{entropia crescente}. 
\end{proof}

\begin{remark} It follows from the previous corollary that if $Dom(\sigma)=X$ and $X$ is compact, then the entropy of $\sigma$ introduced in Definition \ref{defofentropyviacovers} coincides with the usual topological entropy, as defined  in \cite[Definition 7.6]{Walters}, for example.
\end{remark}

We showed above that our definition of entropy via covers generalizes the usual definition of entropy via covers. Our next goal is to show that the entropy relative to a compact set of a Deaconu-Renault system can be computed using a sequence of covers with diameter converging to zero. We start recalling some usual concepts regarding covers in a metric space. 

\begin{definition}
Let $(X,d)$ be a metric space, $Y\subseteq X$ and $\beta$ a cover of $Y$. We define the diameter of $\beta$ by $diam(\beta)=sup\{diam(A):A\in \beta\},$ where $diam(A)=sup\{d(x,y):x,y\in A\}.$ If $Y$ is compact, then a Lebesgue number $\delta>0$ for $\beta$ is a number such that every subset of $Y$ with diameter less than $\delta$ is contained in some member of the cover $\beta$.
\end{definition}

The following lemma will be useful.

\begin{lemma}\label{Lemma04}Let $(X,\sigma)$ be a metric Deaconu-Renault system and let $K\subseteq X$ be a compact set.
\begin{enumerate}
    \item If $\alpha$ and $\beta$ are covers of $K$, with $\alpha$ a subcover of $\beta$, then $h(\beta,\sigma,K)\leq h(\alpha,\sigma,K)$.
    \item For every cover $\alpha$ of $K$,
    there exists $\delta>0$ such that if $\beta$ is a cover of $K$ with $diam(\beta)\leq \delta$, then $h(\alpha,\sigma,K)\leq h(\beta,\sigma,K)$. 
\end{enumerate}
\end{lemma}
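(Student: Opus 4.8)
For the first item, the plan is to track how the inclusion $\alpha\subseteq\beta$ propagates to the joins. First I would observe that every element of $\alpha_n$ has the form $A_0\cap\sigma^{-1}(A_1)\cap\ldots\cap\sigma^{-n}(A_n)$ with each $A_i\in\alpha\subseteq\beta$, so that $\alpha_n\subseteq\beta_n$ as families of sets. Consequently, every subfamily of $\alpha_n$ that covers $K_n$ is also a subfamily of $\beta_n$ covering $K_n$; hence the smallest cardinality $N(\beta_n,K_n)$ is an infimum over a larger collection of candidate subcovers, giving $N(\beta_n,K_n)\leq N(\alpha_n,K_n)$. Taking logarithms, dividing by $n$, and passing to the limit (which exists by Proposition~\ref{limitexists}) yields $h(\beta,\sigma,K)\leq h(\alpha,\sigma,K)$. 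Throughout I treat the covers as open covers, as required for $h(\cdot,\sigma,K)$ to be defined.

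For the second item, the key tool is a Lebesgue number for the open cover $\alpha$ of the compact set $K$. I would let $\delta_0>0$ be a Lebesgue number for $\alpha$ and set $\delta=\delta_0/2$, so as to convert the non-strict hypothesis $\diam(\beta)\leq\delta$ into the strict inequality the Lebesgue number requires. Given any open cover $\beta$ of $K$ with $\diam(\beta)\leq\delta$, each member $B\in\beta$ satisfies $\diam(B\cap K)\leq\delta<\delta_0$, so whenever $B\cap K\neq\emptyset$ there is a member $A(B)\in\alpha$ with $B\cap K\subseteq A(B)$. The plan is then to prove $N(\alpha_n,K_n)\leq N(\beta_n,K_n)$ for every $n$, from which $h(\alpha,\sigma,K)\leq h(\beta,\sigma,K)$ follows exactly as in the first item.

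To establish $N(\alpha_n,K_n)\leq N(\beta_n,K_n)$, I would start from a minimal subcover $\{C_1,\ldots,C_p\}$ of $\beta_n$ covering $K_n$, with $C_j=B_0^j\cap\sigma^{-1}(B_1^j)\cap\ldots\cap\sigma^{-n}(B_n^j)$ and $B_i^j\in\beta$, and for each $j$ with $C_j\cap K_n\neq\emptyset$ set $D_j=A(B_0^j)\cap\sigma^{-1}(A(B_1^j))\cap\ldots\cap\sigma^{-n}(A(B_n^j))\in\alpha_n$. The crucial verification is that $\{D_j\}$ still covers $K_n$: for $x\in K_n$ one has $\sigma^i(x)\in K$ for all $i\leq n$, and choosing $j$ with $x\in C_j$ gives $\sigma^i(x)\in B_i^j\cap K\subseteq A(B_i^j)$, i.e. $x\in\sigma^{-i}(A(B_i^j))$ for each $i$, whence $x\in D_j$. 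Since $\{D_j\}$ has cardinality at most $p=N(\beta_n,K_n)$, this yields the desired inequality.

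The main obstacle, and essentially the only place needing care, lies in the second item. It is important to apply the Lebesgue number to $B\cap K$ rather than to $B$ itself, since the members of $\beta$ need not be contained in $K$; one must then check that the containment $B_i^j\cap K\subseteq A(B_i^j)$ composes correctly through all the preimages $\sigma^{-i}$ so that the refined sets $D_j$ genuinely cover $K_n$. The first item, by contrast, is routine once the inclusion $\alpha_n\subseteq\beta_n$ is recorded.
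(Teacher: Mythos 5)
Your proposal is correct and follows essentially the same route as the paper: item 1 via the inclusion $\alpha_n\subseteq\beta_n$ and the resulting inequality $N(\beta_n,K_n)\leq N(\alpha_n,K_n)$, and item 2 via a Lebesgue number argument applied to intersections with $K$, refining a minimal subcover of $\beta_n$ to a subcover of $\alpha_n$ of no larger cardinality. The only cosmetic difference is that you apply the Lebesgue number of $\alpha$ directly to the sets $B\cap K$ (halving it to accommodate the non-strict bound $\diam(\beta)\leq\delta$), whereas the paper works with the induced cover $\alpha^K=\{A\cap K : A\in\alpha\}$; both yield the same key containment and the same covering verification on $K_n$.
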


\begin{proof} To prove the first item, let  $n\in \N$. Since $\alpha\subseteq\beta$, we have that $\alpha_n\subseteq \beta_n$ and hence each subcover $\gamma$ of $\alpha_n$ is also a subcover of $\beta_n$. Therefore, $N(\beta_n,K_n)\leq N(\alpha_n,K_n)$, from where the result follows.

We now prove the second item. Let $\alpha$ be an open cover of $K$ and define $\alpha^K=\{A\cap K: A\in \alpha\}$. Notice that $\alpha^K$ is an open cover (in $K$) of the compact space $K$. Let $\delta>0$ be a Lebesgue number of $\alpha^K$, and let $\beta$ be an open cover (in $X$) of $K$, such that $diam(\beta)<\delta$. Define $\beta^K=\{B\cap K:B\in \beta\}$. Fix $n\in \N$. We show that $N(\alpha_n,K_n)\leq N(\beta_n,K_n)$. Let $\gamma$ be a subcover of minimal cardinality of $\beta_n$, so that $N(\beta_n,K_n)$ equals the cardinality of $\gamma$. 

Recall that each $Y\in \gamma$ is of the form $Y=B_0\cap \sigma^{-1}(B_1)\cap\ldots \cap \sigma^{-n}(B_n)$, with $B_i\in \beta$ for each $i\in \{0,1\ldots ,n\}$. So, 
$$Y\cap K_n=(B_0\cap K)\cap \sigma^{-1}(B_1\cap K)\cap\ldots \cap \sigma^{-n}(B_n\cap K).$$

Since $diam(\beta)<\delta$, we have that $diam(\beta^K)<\delta$. Then, for each $C\in \beta^K$ there exists some $D\in \alpha^K$ such that $C\subseteq D$. Moreover, for each $i\in \{0,1.,..,n\}$, there exists $A_i\in \alpha$ such that $B_i\cap K \subseteq A_i\cap K$ (since  $B_i\cap K\in \beta^K$). Define
$Z=A_0\cap \sigma^{-1}(A_1)\cap \ldots \cap \sigma^{-n}(A_n)$, which is an element of $\alpha_n$. Then, $Y\cap K_n\subseteq Z\cap K_n.$

We have shown above that for each $Y\in \gamma$ there exists a set, which we denote by $Z_Y$, in $\alpha_n$ such that $Y\cap K_n\subseteq Z_Y\cap K_n$. Hence,  $$\bigcup\limits_{Y\in \gamma}(Y\cap K_n)\subseteq \bigcup\limits_{Y\in \gamma}(Z_Y\cap K_n).$$ 
Since $K_n\subseteq \bigcup\limits_{Y\in \gamma}Y$ we obtain that
$K_n=\bigcup\limits_{Y\in \gamma}(Y\cap K_n),$ and hence
$$K_n= \bigcup\limits_{Y\in \gamma}(Y\cap K_n)\subseteq \bigcup\limits_{Y\in \gamma}(Z_Y\cap K_n)\subseteq\bigcup\limits_{Y\in \gamma}Z_Y.$$
Therefore, $\{Z_Y:Y\in \gamma\}$ is a subcover (of the set $K_n$) of the cover $\alpha_n$. Since the cardinality of the set $\{Z_Y:Y\in \gamma\}$ is less or equal to the cardinality of $\gamma$, which is $N(\beta_n,K_n)$, it follows that $N(\alpha_n,K_n)\leq N(\beta_n,K_n)$. Consequently, $h(\alpha, \sigma, K)\leq h(\beta,\sigma, K)$.

\end{proof}

Next, we prove that the entropy of $\sigma$ relative to a compact set can be computed using a sequence of covers whose diameter converges to zero.

\begin{proposition}\label{diametro indo pra zero}
Let $(X,\sigma)$ be a metric Deaconu-Renault system, $K\subseteq X$ compact, and  $\{\alpha^n\}_{n=1}^{\infty}$ a sequence of open covers of $K$ with $diam(\alpha^n)\rightarrow 0$. Then, $h(\sigma,K)=\lim\limits_{n\rightarrow \infty} h( \alpha^n,\sigma,K)$.
\end{proposition}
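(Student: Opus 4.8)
The plan is to establish the two inequalities $\limsup_n h(\alpha^n,\sigma,K) \le h(\sigma,K)$ and $h(\sigma,K) \le \liminf_n h(\alpha^n,\sigma,K)$, which together force the sequence $\big(h(\alpha^n,\sigma,K)\big)_n$ to converge to $h(\sigma,K)$. Both inequalities follow quickly from Lemma~\ref{Lemma04} together with the definition $h(\sigma,K)=\sup_{\alpha}h(\alpha,\sigma,K)$, so the whole argument is a squeeze.

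For the upper bound I would simply observe that each $\alpha^n$ is an open cover of $K$, so by definition of the supremum $h(\alpha^n,\sigma,K)\le h(\sigma,K)$ for every $n$; taking the $\limsup$ gives $\limsup_n h(\alpha^n,\sigma,K)\le h(\sigma,K)$.

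For the lower bound I would fix an arbitrary open cover $\alpha$ of $K$. By the second item of Lemma~\ref{Lemma04}, there is $\delta>0$ such that every cover $\beta$ of $K$ with $\diam(\beta)\le\delta$ satisfies $h(\alpha,\sigma,K)\le h(\beta,\sigma,K)$. Since $\diam(\alpha^n)\to 0$, there is an $N$ with $\diam(\alpha^n)\le\delta$ for all $n\ge N$, whence $h(\alpha,\sigma,K)\le h(\alpha^n,\sigma,K)$ for all $n\ge N$, and therefore $h(\alpha,\sigma,K)\le\liminf_n h(\alpha^n,\sigma,K)$. As $\alpha$ was an arbitrary open cover, taking the supremum over all such $\alpha$ yields $h(\sigma,K)\le\liminf_n h(\alpha^n,\sigma,K)$.

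Combining the two bounds gives $h(\sigma,K)\le\liminf_n h(\alpha^n,\sigma,K)\le\limsup_n h(\alpha^n,\sigma,K)\le h(\sigma,K)$, so the limit exists and equals $h(\sigma,K)$. I do not expect a serious obstacle here; the only point needing care is that $h(\sigma,K)$ may be $+\infty$, so the inequalities should be read in the extended reals $\overline{\R}$. In that case the lower-bound step still shows that for every finite $M<h(\sigma,K)$ one has $\liminf_n h(\alpha^n,\sigma,K)\ge M$, forcing the limit to be $+\infty$ as well. The essential input is the Lebesgue-number mechanism encoded in Lemma~\ref{Lemma04}(2), which turns ``small diameter'' into dominance over the entropy relative to any fixed cover; the monotonicity in Lemma~\ref{Lemma04}(1) is not needed for this statement.
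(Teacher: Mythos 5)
Your proof is correct and follows essentially the same route as the paper: both arguments rest on the Lebesgue-number mechanism of Lemma~\ref{Lemma04}(2) for the lower bound and on the definition of $h(\sigma,K)$ as a supremum for the upper bound. The only difference is presentational — the paper splits into the cases $h(\sigma,K)=\infty$ and $h(\sigma,K)<\infty$ and uses an $\varepsilon$-approximating cover, whereas you package the same content as a $\liminf$/$\limsup$ squeeze over arbitrary covers, which handles both cases uniformly.
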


\begin{proof}
Let $\{\alpha^n\}_{n=1}^\infty$ be a sequence of open covers of $K$ such that $diam(\alpha^n)\rightarrow 0$.

Suppose that $h(\sigma,K)=\infty$. Let $M>0$ and let $\alpha$ be an open cover of $K$ such that $h(\alpha,\sigma,K)\geq M$. From Lemma \ref{Lemma04}, there exists $\delta>0$ such that for each open cover $\beta$ of $K$, with $diam(\beta)<\delta$, the inequality $h(\beta,\sigma,K)\geq h(\alpha,\sigma,K)$ is true. Let $n_0\in \N$ be such that $diam(\alpha_n)<\delta$ for every $n\geq n_0$. Then, for $n\geq n_0$, we have that $h(\alpha^n,\sigma,K)\geq h(\alpha,\sigma,K)$. Therefore, $\lim\limits_{n\rightarrow \infty}h(\alpha^n,\sigma,K)=\infty$.  

To finish the proof, suppose that $h(\sigma,K)<\infty$ and fix an $\varepsilon>0$. From the definition of $h(\sigma, K)$, there exists an open cover $\alpha$ of $K$ such that $h(\sigma, K)-\varepsilon\leq h(\alpha,\sigma, K)\leq h(\sigma, K)$. Let $\delta>0$ be as in Lemma~\ref{Lemma04}, and let $n_1\in \N$ be such that $diam(\alpha^n)\leq\delta$ for every $n\geq n_1$. Then, by Lemma~\ref{Lemma04} and the definition of $h(\sigma, K)$, we have that $h(\sigma, K)-\varepsilon\leq h(\alpha^n,\sigma, K)\leq h(\sigma, K)$ for each $n\geq n_1$. This means that $\lim\limits_{n\rightarrow \infty}h(\alpha^n,\sigma,K)=h(\sigma, K)$, as desired.
\end{proof}

Our goal in the remainder of the section is to show that the topological entropy is a lower bound to the metric entropy of a Deaconu-Renault system. We set up the ground for this result below.

\begin{lemma}\label{lemma03}
Let $(X,\sigma)$ be a metric Deaconu-Renault system and let $K\subseteq X$ be compact. If $\alpha$ is an open cover of $K$ and $\delta$ is a Lebesgue number for $\alpha^K=\{A\cap K:A\in \alpha\}$, then
$$N(\alpha_n,K_n)\leq sspan(n+1,\frac{\delta}{8},\sigma,K)$$ for each $n\in \N$. 
\end{lemma}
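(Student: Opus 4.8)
The plan is to prove the statement by the two-step chain
$$N(\alpha_n,K_n)\le span(n+1,\tfrac{\delta}{4},\sigma,K\cap Dom(\sigma^n))\le sspan(n+1,\tfrac{\delta}{8},\sigma,K),$$
where the second inequality is exactly the second item of Proposition~\ref{propnovossepspan}, applied with $\varepsilon=\frac{\delta}{4}$ and $n$ replaced by $n+1$ (so that $Dom(\sigma^{(n+1)-1})=Dom(\sigma^n)$). All the real content is therefore in the first inequality, which compares a minimal spanning set of $K\cap Dom(\sigma^n)$ with a subcover of $\alpha_n$, and this is where the classical Lebesgue-number argument must be adapted to the dynamical balls $U(b,n+1,\cdot)$ of Remark~\ref{dynamicball}.

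First I would record the structural fact that the domains are nested, $Dom(\sigma^n)\subseteq Dom(\sigma^i)$ for $0\le i\le n$, so that any $z\in K\cap Dom(\sigma^n)$ has $I_{n+1}(z)=\{0,\dots,n\}$. In particular every $y\in K_n$ lies in $Dom(\sigma^n)$ (since $\sigma^n(y)\in K$) and satisfies $\sigma^i(y)\in K$ for all $0\le i\le n$; hence $K_n\subseteq K\cap Dom(\sigma^n)$, and for such $y$ and any $b\in K\cap Dom(\sigma^n)$ the quantity $d_{n+1}(y,b)$ is simply $\max_{0\le i\le n}d(\sigma^i(y),\sigma^i(b))$, with no gaps in the index set to worry about.

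Now I would take a minimal $(n+1,\frac{\delta}{4},\sigma,K\cap Dom(\sigma^n))$-spanning set $B$; if $span(n+1,\frac{\delta}{4},\sigma,K\cap Dom(\sigma^n))=\infty$ the first inequality is trivial, since $N(\alpha_n,K_n)<\infty$ by Lemma~\ref{lemmacover}. For each $b\in B$ and each $i\in\{0,\dots,n\}$ set $C_i^b=\{\sigma^i(y):y\in K_n,\ d_{n+1}(y,b)\le\frac{\delta}{4}\}$. Every point of $C_i^b$ lies in $K$ and within $\frac{\delta}{4}$ of $\sigma^i(b)$, so $\diam(C_i^b)\le\frac{\delta}{2}<\delta$; as $\delta$ is a Lebesgue number for $\alpha^K$, there is $A_i^b\in\alpha$ with $C_i^b\subseteq A_i^b\cap K$. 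Putting $V_b=\bigcap_{i=0}^n\sigma^{-i}(A_i^b)\in\alpha_n$, any $y\in K_n$ with $d_{n+1}(y,b)\le\frac{\delta}{4}$ has $\sigma^i(y)\in A_i^b$ for every $i$, hence $y\in V_b$. Since $B$ spans $K\cap Dom(\sigma^n)\supseteq K_n$, each $y\in K_n$ satisfies $d_{n+1}(y,b)\le\frac{\delta}{4}$ for some $b\in B$ and thus lies in $V_b$, so $\{V_b:b\in B\}$ is a subcover of $\alpha_n$ covering $K_n$ with at most $|B|$ members, giving $N(\alpha_n,K_n)\le|B|=span(n+1,\frac{\delta}{4},\sigma,K\cap Dom(\sigma^n))$.

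The one genuinely delicate point is the bookkeeping forced by the partial domain: spanning sets are defined only relative to closed subsets of $Dom(\sigma^n)$, while $K_n$ need not be closed (indeed need not be compact). This is exactly why the final passage to $sspan$ is routed through the second item of Proposition~\ref{propnovossepspan} rather than attempted directly; the remainder is the standard comparison, with the constants chosen so that the two successive halvings (one from the diameter estimate $\frac{\delta}{4}+\frac{\delta}{4}<\delta$, one from Proposition~\ref{propnovossepspan}) combine to the stated $\frac{\delta}{8}$.
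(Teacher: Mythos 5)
Your proof is correct and takes essentially the same route as the paper's: a minimal $(n+1,\frac{\delta}{4},\sigma,K\cap Dom(\sigma^n))$-spanning set, the Lebesgue number applied to the $\sigma^i$-images to produce sets $\bigcap_{i=0}^n\sigma^{-i}(A_i^b)\in\alpha_n$ covering $K_n$, and then the second item of Proposition~\ref{propnovossepspan} to pass from $span(n+1,\frac{\delta}{4},\sigma,K\cap Dom(\sigma^n))$ to $sspan(n+1,\frac{\delta}{8},\sigma,K)$. If anything, your diameter estimate $\diam(C_i^b)\leq\frac{\delta}{2}<\delta$ and your explicit handling of the case $span(n+1,\frac{\delta}{4},\sigma,K\cap Dom(\sigma^n))=\infty$ are slightly tidier than the corresponding steps in the paper.
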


\begin{proof} Let $n\in \N$ and $B$ be an $(n+1,\frac{\delta}{4},\sigma,K\cap Dom(\sigma^n))$-spanning set of smallest cardinality.
Then, for every $y\in K\cap Dom(\sigma^n)$, there exists $x\in B$ such that $d_{n+1}(x,y)\leq \frac{\delta}{4}<\frac{\delta}{2}$. 
and hence
$$K\cap Dom(\sigma^n)\subseteq\bigcup_{x\in B}\left(\bigcap_{i=0}^n \ \sigma^{-i}\left( \mathcal{B}\left(\sigma^i(x),\frac{\delta}{2}\right)  \right)\right).$$
Therefore,
\begin{eqnarray*}
K_n&=&(K\cap Dom(\sigma^n))\cap K_n \subseteq \left(\bigcup_{x\in B}\left(\bigcap_{i=0}^n  \sigma^{-i}\left( \mathcal{B}\left(\sigma^i(x),\frac{\delta}{2}\right)  \right)\right)\right)\bigcap K_n \\
&=&\bigcup_{x\in B}\left(\bigcap_{i=0}^n  \sigma^{-i}\left( \mathcal{B}\left(\sigma^i(x),\frac{\delta}{2}\right)  \right)\bigcap\limits_{i=1}^n \sigma^{-i}(K)\right)\subseteq
\bigcup_{x\in B}\left(\bigcap_{i=0}^n  \sigma^{-i}\left( \mathcal{B}\left(\sigma^i(x),\frac{\delta}{2}\right)  \bigcap K\right)\right).
\end{eqnarray*}

Now, since $\delta$ is a Lebesgue number for $\alpha^K$, for each $x\in B$ and $i\in \{0,\ldots ,n\}$ there exists some $A_i^x\in \alpha$ such that $$\mathcal{B}\left(\sigma^i(x), \frac{\delta}{2}\right)\bigcap K\subseteq A_i^x\cap K\subseteq A_i^x.$$ 

Therefore, $K_n\subseteq \bigcup\limits_{x\in B}\left(\bigcap\limits_{i=0}^n\sigma^{-i}(A_i^x)\right)$ and then, since $\bigcap\limits_{i=0}^n\sigma^{-i}(A_i^x)$ is an element of $\alpha_n$ for each $x\in B$, we obtain that 
$N(\alpha_n,K_n)$ is less or equal to the cardinality of $B$, which is equal to $span(n+1,\frac{\delta}{4}, \sigma, K\cap Dom(\sigma^n))$. Applying the second item of Proposition~\ref{propnovossepspan} we conclude that $$N(\alpha_n, K_n)\leq sspan(n+1,\frac{\delta}{8}, \sigma, K).$$
\end{proof}

We now have all the ingredients necessary to prove that the topological entropy of a Deaconu-Renault system is a lower bound to the metric entropy.

\begin{theorem}\label{zebra} Let $(X,\sigma)$ be a Deaconu-Renault system on the metric space $(X,d)$. Then, $h(\sigma)\leq h_d(\sigma)$.
\end{theorem}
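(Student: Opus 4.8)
The plan is to reduce everything to the single-cover, single-compact-set estimate furnished by Lemma~\ref{lemma03}, and then peel off the three suprema in turn. The heavy lifting is already contained in that lemma; what remains is bookkeeping with limits and suprema, plus a harmless index shift.

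First I would fix a compact set $K\subseteq X$ and an open cover $\alpha$ of $K$. Since $\alpha^K=\{A\cap K:A\in \alpha\}$ is an open cover of the compact metric space $K$, it admits a Lebesgue number $\delta>0$, so Lemma~\ref{lemma03} applies and gives $N(\alpha_n,K_n)\leq sspan(n+1,\frac{\delta}{8},\sigma,K)$ for every $n\in\N$. Taking logarithms and using that $h(\alpha,\sigma,K)=\lim_n \frac{1}{n}H(\alpha_n,K_n)$ exists (Proposition~\ref{limitexists}), I obtain
$$h(\alpha,\sigma,K)\leq \limsup_{n\to\infty}\frac{1}{n}\log sspan\left(n+1,\frac{\delta}{8},\sigma,K\right).$$

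Second, I would absorb the index shift: writing $\frac{1}{n}\log sspan(n+1,\frac{\delta}{8},\sigma,K)=\frac{n+1}{n}\cdot\frac{1}{n+1}\log sspan(n+1,\frac{\delta}{8},\sigma,K)$ and noting that the factor $\frac{n+1}{n}$ tends to $1$, the limsup is unaffected, so after reindexing it equals $\limsup_{m\to\infty}\frac{1}{m}\log sspan(m,\frac{\delta}{8},\sigma,K)=h_{\delta/8}(\sigma,K,d)$, where I use the spanning-set formulation of $h_\varepsilon$ permitted by the remark following Definition~\ref{defentropy} and Proposition~\ref{leao}. Hence $h(\alpha,\sigma,K)\leq h_{\delta/8}(\sigma,K,d)$.

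Finally, since $h_\varepsilon(\sigma,K,d)$ increases as $\varepsilon\to 0$ and $h_d(\sigma,K)=\lim_{\varepsilon\to 0}h_\varepsilon(\sigma,K,d)$ is its supremum, I get $h_{\delta/8}(\sigma,K,d)\leq h_d(\sigma,K)$, and therefore $h(\alpha,\sigma,K)\leq h_d(\sigma,K)$. Taking the supremum over all open covers $\alpha$ of $K$ yields $h(\sigma,K)\leq h_d(\sigma,K)$, and then the supremum over all compact sets $K\subseteq X$ yields $h(\sigma)\leq h_d(\sigma)$, as claimed. The only step requiring any care is the index shift in the limsup; everything else is a direct chaining of the definitions together with the cited monotonicity in $\varepsilon$.
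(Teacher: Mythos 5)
Your proposal is correct, and it rests on the same key estimate as the paper's proof, namely Lemma~\ref{lemma03}; the difference lies in how the suprema are assembled afterwards. The paper fixes a sequence of open covers $(\alpha^m)_{m\in\N}$ of $K$ with diameter tending to zero, takes Lebesgue numbers $\delta_m$ (which then tend to zero), and invokes Proposition~\ref{diametro indo pra zero} to identify $h(\sigma,K)$ with $\lim_{m} h(\alpha^m,\sigma,K)$, so that the bounds $h(\alpha^m,\sigma,K)\leq h_{\delta_m/8}(\sigma,K,d)$ converge to exactly $h_d(\sigma,K)$. You instead work with a single arbitrary cover $\alpha$ and observe that, whatever its Lebesgue number $\delta$ is, the chain $h(\alpha,\sigma,K)\leq h_{\delta/8}(\sigma,K,d)\leq h_d(\sigma,K)$ holds by monotonicity of $h_{\varepsilon}$ in $\varepsilon$, and then take the supremum over all covers. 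This buys you two things: you bypass Proposition~\ref{diametro indo pra zero} entirely, and you never need the Lebesgue numbers to shrink, since the $\varepsilon$-monotonicity washes out all dependence on $\delta$. You are also more careful than the paper on the index shift: Lemma~\ref{lemma03} genuinely produces $sspan(n+1,\frac{\delta}{8},\sigma,K)$, while the paper's proof quotes it as $sspan(n,\frac{\delta_m}{8},\sigma,K)$ and silently absorbs the shift, which your $\frac{n+1}{n}$ argument handles explicitly. One small point of phrasing: $h_{\varepsilon}(\sigma,K,d)$ is defined via $ssep$, and Proposition~\ref{leao} gives only the sandwich $sspan(n,\varepsilon)\leq ssep(n,\varepsilon)\leq sspan(n,\frac{\varepsilon}{2})$, not a term-by-term identity, so your claimed equality $\limsup_m \frac{1}{m}\log sspan(m,\frac{\delta}{8},\sigma,K)=h_{\delta/8}(\sigma,K,d)$ should strictly be read either as adopting the spanning-set formulation of the entropy (which the remark after Definition~\ref{defentropy} permits, since both formulations have the same limit as $\varepsilon\to 0$) or weakened to the inequality $\leq$ via $sspan\leq ssep$; since that is exactly the direction your argument needs, the proof stands either way.
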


\begin{proof}
Let $K\subseteq X$ be a compact set and let $(\alpha^m)_{m\in \N}$ be a sequence of open covers of $K$ such that $diam(\alpha^m)\rightarrow 0$. For each $m$, let $\delta_m$ be a Lebesgue number for the cover $\{A\cap K:A\in \alpha^m\}$. By Lemma~\ref{lemma03} (Item i.), we have that $N(\alpha_n^m, K_n)\leq sspan(n, \frac{\delta_m}{8},\sigma, K)$ for each $m,n\in\N$, and therefore $h(\alpha^m,\sigma, K)\leq h_{\frac{\delta_m}{8}}(\sigma, K, d)$ for each $m\in\N$.
Moreover, since $diam(\alpha^m)\rightarrow 0$, we have that $\delta_m\rightarrow 0$. Then, 
$$h(\sigma,K)=\lim\limits_{n\rightarrow \infty}h(\alpha^m,\sigma,K)\leq \lim\limits_{n\rightarrow \infty}h_{\frac{\delta_m}{8}}(\sigma, K, d)=h_d(\sigma, K).$$
Finally, taking the supremum over all the compact sets $K\subseteq X$ in the inequality above, we obtain that $h(\sigma)\leq h_d(\sigma)$.
\end{proof}

\section{Entropy of graphs and ultragraphs}\label{graphEntropy}

In this section, we focus on computing the entropy of Deaconu-Renault systems associated with graphs and ultragraphs. To contextualize our work with the existing literature, we make a study of the Deaconu-Renault system (and its entropy) associated with a row-finite graph. We also compute the topological entropy associated with the Renewal shift ultragraph, an example that models interesting dynamics and has been studied in the literature, for example in \cite{BEFR, MR2293630}.

Before we proceed, we recall the necessary concepts and notations regarding ultragraphs, which also apply to graphs.

\subsection{Preliminaries on ultragraphs}\label{section ultragraphs}

\begin{definition}\label{def of ultragraph}
An ultragraph is a quadruple $\mathcal{G}=(G^0, \mathcal{G}^1, r,s)$ consisting of two countable sets $G^0, \mathcal{G}^1$, a map $s:\mathcal{G}^1 \to G^0$, and a map $r:\mathcal{G}^1 \to P(G^0)\setminus \{\emptyset\}$, where $P(G^0)$ stands for the power set of $G^0$.
\end{definition}

\begin{definition}\label{def of mathcal{G}^0}
Let $\mathcal{G}$ be an ultragraph. Define $\mathcal{G}^0$ to be the smallest subset of $P(G^0)$ that contains $\{v\}$ for all $v\in G^0$, contains $r(e)$ for all $e\in \mathcal{G}^1$, and is closed under finite unions and nonempty finite intersections.
\end{definition}

Let $\mathcal{G}$ be an ultragraph. A finite path in $\mathcal{G}$ is either an element of $\mathcal{G}
^{0}$ or a sequence of edges $e_{1}\ldots  e_{k}$ in $\mathcal{G}^{1}$ where
$s\left(  e_{i+1}\right)  \in r\left(  e_{i}\right)  $ for $1\leq i\leq k$. If
we write $\alpha=e_{1}\ldots  e_{k}$, then the length $\left|  \alpha\right|  $ of
$\alpha$ is $k$. The length $|A|$ of a path $A\in\mathcal{G}^{0}$ is
zero. We define $r\left(  \alpha\right)  =r\left(  e_{k}\right)  $ and
$s\left(  \alpha\right)  =s\left(  e_{1}\right)  $. For $A\in\mathcal{G}^{0}$,
we set $r\left(  A\right)  =A=s\left(  A\right)  $. The set of
finite paths in $\mathcal{G}$ is denoted by $\mathcal{G}^{\ast}$. An infinite path in $\mathcal{G}$ is an infinite sequence of edges $\gamma=e_{1}e_{2}\ldots $ in $\prod \mathcal{G}^{1}$, where
$s\left(  e_{i+1}\right)  \in r\left(  e_{i}\right)  $ for all $i$. The set of
infinite paths in $\mathcal{G}$ is denoted by $\mathfrak
{p}^{\infty}$. The length $\left|  \gamma\right|  $ of $\gamma\in\mathfrak
{p}^{\infty}$ is defined to be $\infty$. A vertex $v$ in $\mathcal{G}$ is
called a sink if $\left|  s^{-1}\left(  v\right)  \right|  =0$ and is
called an infinite emitter if $\left|  s^{-1}\left(  v\right)  \right|
=\infty$. 

For $n\geq1,$ we define
$\mathfrak{p}^{n}:=\{\left(  \alpha,A\right)  :\alpha\in\mathcal{G}^{\ast
},\left\vert \alpha\right\vert =n,$ $A\in\mathcal{G}^{0},A\subseteq r\left(
\alpha\right)  \}$. We specify that $\left(  \alpha,A\right)  =(\beta,B)$ if
and only if $\alpha=\beta$ and $A=B$. We set $\mathfrak{p}^{0}:=\mathcal{G}^{0}$ and we let $\mathfrak{p}:=\coprod\limits_{n\geq0}\mathfrak{p}^{n}$. We embed the set of finite paths $\GG^*$ in $\mathfrak{p}$ by sending $\alpha$ to $(\alpha, r(\alpha))$. We
define the length of a pair $\left(  \alpha,A\right)  $, $\left\vert \left(
\alpha,A\right)  \right\vert $, to be the length of $\alpha$, $\left\vert
\alpha\right\vert $. We call $\mathfrak{p}$ the ultrapath space
associated with $\mathcal{G}$ and the elements of $\mathfrak{p}$ are called
ultrapaths (or just paths when the context is clear). Each $A\in\mathcal{G}^{0}$ is regarded as an ultrapath of length zero and can be identified with the pair $(A, A)$. We may extend the range map $r$ and the source map $s$ to
$\mathfrak{p}$ by the formulas, $r\left(  \left(  \alpha,A\right)  \right)
=A$, $s\left(  \left(  \alpha,A\right)  \right)  =s\left(  \alpha\right)
$ and $r\left(  A\right)  =s\left(  A\right)  =A$.

We concatenate elements in $\mathfrak{p}$ in the following way: If $x=(\alpha,A)$ and $y=(\beta,B)$, with $|x|\geq 1, |y|\geq 1$, then $x\cdot y$ is defined if and only if
$s(\beta)\in A$, and in this case, $x\cdot y:=(\alpha\beta,B)$. Also, we
specify that:
\begin{equation}
x\cdot y=\left\{
\begin{array}
[c]{ll}
x\cap y & \text{if }x,y\in\mathcal{G}^{0}\text{ and if }x\cap y\neq\emptyset\\
y & \text{if }x\in\mathcal{G}^{0}\text{, }\left|  y\right|  \geq1\text{, and
if } s\left(  y\right) \in x \\
x_{y} & \text{if }y\in\mathcal{G}^{0}\text{, }\left|  x\right|  \geq1\text{,
and if }r\left(  x\right)  \cap y\neq\emptyset
\end{array}
\right.  \label{specify}
\end{equation}
where, if $x=\left(  \alpha,A\right)  $, $\left|  \alpha\right|  \geq1$ and if
$y\in\mathcal{G}^{0}$, the expression $x_{y}$ is defined to be $\left(
\alpha,A\cap y\right)  $. Given $x,y\in\mathfrak{p}$, we say that $x$ has $y$ as an initial segment if
$x=y\cdot x^{\prime}$, for some $x^{\prime}\in\mathfrak{p}$, with $s\left(
x^{\prime}\right)  \in r\left(  y\right) $. 

We extend the source map $s$ to $\mathfrak
{p}^{\infty}$, by defining $s(\gamma)=s\left(  e_{1}\right)  $, where
$\gamma=e_{1}e_{2}\ldots $. We may concatenate pairs in $\mathfrak{p}$, with
infinite paths in $\mathfrak{p}^{\infty}$ as follows. If $y=\left(
\alpha,A\right)  \in\mathfrak{p}$, and if $\gamma=e_{1}e_{2}\ldots \in
\mathfrak{p}^{\infty}$ are such that $s\left(  \gamma\right)  \in r\left(
y\right)  =A$, then the expression $y\cdot\gamma$ is defined to be
$\alpha\gamma=\alpha e_{1}e_{2}\ldots \in\mathfrak{p}^{\infty}$. If $y=$
$A\in\mathcal{G}^{0}$, we define $y\cdot\gamma=A\cdot\gamma=\gamma$ whenever
$s\left(  \gamma\right)  \in A$. Of course $y\cdot\gamma$ is not defined if
$s\left(  \gamma\right)  \notin r\left(  y\right)  =A$.

Next, we recall the construction of the Deaconu-Renault system associated with an ultragraph, as in \cite{GRultrapartial, TascaDaniel}. We start with a couple of definitions.

\begin{definition}
\label{infinte emitter} For each subset $A$ of $G^{0}$, let
$\varepsilon\left(  A\right)  $ be the set $\{ e\in\mathcal{G}^{1}:s\left(
e\right)  \in A\}$. We say that a set $A$ in $\mathcal{G}^{0}$ is an
infinite emitter whenever $\varepsilon\left(  A\right)  $ is infinite. We say that $A$ is a minimal infinite emitter if it is an infinite emitter that contains no proper subsets (in $\GG^0$) that are infinite emitters. We denote the set of all minimal infinite emitters in $r(\alpha)$ by $M_\alpha$.
\end{definition}

Since the edge shift of a graph is usually considered for graphs without sinks, we restrict ourselves to ultragraphs without sinks. In this case, the Deaconu-Renault system associated with the ultragraph satisfies Hypothesis~\ref{niceone}. Moreover, we will only deal with ultragraphs that satisfy Condition~RFUM (see \cite{GRultrapartial}), that is, for which the range of each edge is a finite union of minimal infinite emitters union with a finite set of vertices (which are not infinite emitters). We make these assumptions explicit below:

{\bf Throughout assumption:} From now on, all ultragraphs (and, consequently, all graphs) in this paper are assumed to have no sinks and to satisfy Condition~(RFUM).

Associated with an ultragraph, we define the topological space $X= \mathfrak{p}^{\infty} \cup X_{fin}, $ where
 $$X_{fin} = \{(\alpha,A)\in \mathfrak{p}: |\alpha|\geq 1 \text{ and } A\in M_\alpha \}\cup
 \{(A,A)\in \GG^0: A \text{ is a minimal infinite emitter}\}, $$ and the topology has a basis given by the collection $$\{D_{(\beta,B)}: (\beta,B) \in \mathfrak{p}, |\beta|\geq 1\ \} \cup \{D_{(\beta, B),F}:(\beta, B) \in X_{fin}, F\subset \varepsilon\left( B \right), |F|<\infty \},$$ where for each $(\beta,B)\in \mathfrak{p}$ we have that $$D_{(\beta,B)}= \{(\beta, A): A\subset B \text{ and } A\in M_\beta \}\cup\{y \in X: y = \beta \gamma', s(\gamma')\in B\},$$ and, for $(\beta,B)\in X_{fin}$ and $F$ a finite subset of $\varepsilon\left( B \right)$,  $$D_{(\beta, B),F}=  \{(\beta, B)\}\cup\{y \in X: y = \beta \gamma', \gamma_1' \in \ \varepsilon\left( B \right)\setminus F\}.$$
 
\begin{remark}\label{cylindersets} For every $(\beta,B)\in \mathfrak{p}$, we identify $D_{(\beta, B)}$ with $D_{(\beta, B),F}$, where $F=\emptyset$. Furthermore, we call the basic elements of the topology of $X$ given above by cylinder sets.
\end{remark}

Next, we define the Deaconu-Renault system associated with an ultragraph. 

\begin{definition} 
 For $n\in\{0,1,2,\ldots \}$, define the following subsets of $X$:
$$X^n=\p^n\cap X_{fin}=\{x\in X_{fin}:|x|=n\}; 
\ X^{\geq n}=\displaystyle\bigcup_{k\geq n} X^k\text{ and }
 X_{\infty}^{\geq n}=X^{\geq n}\cup \p^\infty .$$
	\end{definition}

Notice that $X_{\infty}^{\geq 1}=X\backslash X^0$ and that $X_{\infty}^{\geq 0}=X$.
	
	\begin{definition}\label{def_shift}
	Let $\GG$ be an ultragraph and $X$ as above. We define the shift map $\sigma:X_{\infty}^{\geq 1}\rightarrow X$ by:
		
		$\sigma(x)=
		\left\{
		\begin{array}{ll}
		\gamma_2\gamma_3\ldots , & \text{if $x=\gamma_1\gamma_2\ldots  \in \p^\infty$};\\
		(\gamma_2\ldots \gamma_n,A), & \text{if $x=(\gamma_1\ldots \gamma_n,A) \in X^{\geq 2}$};\\
		(A,A), & \text{if $x=(\gamma_1,A) \in X^1$}.\\
				\end{array}
		\right.$  
		
		For $n> 1$ we define $\sigma^n$ as the composition $n$ times of $\sigma$, and for $n=0$ we define $\sigma^0$ as the identity. When we write $\sigma^n(x)$ we are implicitly assuming that $x\in X_{\infty}^{\geq n}$. 
		
	\end{definition}

It is shown in \cite[Proposition~5.4]{TascaDaniel} that $(X,\sigma)$ is a Deaconu-Renault system. We state it below for completeness. 

\begin{proposition}\label{DRsystem}\cite[Proposition~5.4]{TascaDaniel}
Let $\mathcal G$ be an ultragraph. Then $(X,\sigma)$ is a Deaconu-Renault system.
\end{proposition}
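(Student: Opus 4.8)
The plan is to verify directly, from the construction of $X$ and $\sigma$, the three requirements in the definition of a Deaconu-Renault system: that $X$ is locally compact Hausdorff, that $Dom(\sigma)=X_{\infty}^{\geq 1}$ is open, and that $\sigma\colon X_{\infty}^{\geq 1}\to X$ is a local homeomorphism. Note that openness of $Im(\sigma)$ need not be checked separately: once $\sigma$ is shown to be a local homeomorphism it is in particular an open map, so $Im(\sigma)=\sigma(Dom(\sigma))$ is automatically open. Throughout I would work against the basis of cylinder sets $D_{(\beta,B)}$ and $D_{(\beta,B),F}$ introduced before \cref{cylindersets}, and use the standing assumptions that $\GG$ has no sinks and satisfies Condition~(RFUM).

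For the Hausdorff property I would take distinct points $x\neq y$ of $X$, write each as a finite or infinite path, and locate either the first coordinate at which they disagree or the stage at which one ceases to be an initial segment of the other; the corresponding basic cylinders then separate them, with the sets $D_{(\beta,B),F}$ (for a suitable finite $F\subset \varepsilon(B)$) handling the delicate case in which one point is a length-zero minimal infinite emitter $(A,A)$. Local compactness is where Condition~(RFUM) does the real work: since the range of every edge is a \emph{finite} union of minimal infinite emitters together with finitely many ordinary vertices, each cylinder $D_{(\beta,B)}$ splits into only finitely many cylinders at the next level. This finite branching lets me exhibit the closure of a basic cylinder as a closed set that is compact (by a Tychonoff/diagonal argument on the finitely-branching tree of extensions), so every point of $X$ has a compact neighborhood.

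To see that $Dom(\sigma)=X\setminus X^0$ is open, I would note that $X^0$ consists exactly of the length-zero minimal infinite emitters $(A,A)$, while every point of positive length (finite or infinite) admits a basic neighborhood $D_{(\beta,B)}$ or $D_{(\beta,B),F}$ with $|\beta|\geq 1$ consisting solely of positive-length points; hence $Dom(\sigma)$ is a union of basic open sets. Finally, for the local homeomorphism property I would fix $x\in Dom(\sigma)$ and choose a basic cylinder $U$ around $x$ all of whose paths share the first edge $\gamma_1$, so that $U=D_{(\gamma_1\beta',B)}$ (or $U=D_{(\gamma_1\beta',B),F}$ in the minimal-infinite-emitter case). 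On such a $U$ the map $\sigma$ from \cref{def_shift} simply deletes $\gamma_1$, and I would check that this is a bijection of $U$ onto the deleted cylinder whose inverse prepends $\gamma_1$, with both $\sigma|_U$ and its inverse sending basic cylinders to basic cylinders, hence bicontinuous. \textbf{The main obstacle} is precisely this last bookkeeping: one must track how the two \emph{types} of cylinder set transform under deletion and prepending of an edge, especially in the boundary cases $|\beta'|=0$ where $\sigma$ lands on a length-zero minimal infinite emitter $(A,A)$ and the correct image neighborhood is of the $D_{(\,\cdot\,,\,\cdot\,),F}$ form; verifying injectivity of $\sigma$ on $U$ likewise relies on the range structure of $\gamma_1$ forcing the source of the second edge to determine the concatenation uniquely.
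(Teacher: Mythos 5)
First, a remark on the comparison itself: the paper does not prove this proposition at all — it quotes it verbatim from \cite[Proposition~5.4]{TascaDaniel} ``for completeness'' — so there is no internal proof to measure you against; your proposal has to be judged directly against the construction of $(X,\sigma)$. Your overall architecture is the right one: verify that $X$ is locally compact Hausdorff, that $Dom(\sigma)=X_{\infty}^{\geq 1}$ is open, and that $\sigma$ is a local homeomorphism, with openness of $Im(\sigma)$ coming for free. The Hausdorff separation, the openness of the domain, and the cylinder-to-cylinder bookkeeping for the local homeomorphism (including the observation that minimal infinite emitters contained in $B$ are exactly the elements of $M_B$, so that $\sigma$ maps $D_{(\gamma_1\beta',B)}$ bijectively onto a basic set) are all workable as sketched.

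The genuine gap is in local compactness. You assert that Condition~(RFUM) makes ``each cylinder $D_{(\beta,B)}$ split into only finitely many cylinders at the next level,'' and you then invoke a Tychonoff/K{\"o}nig-type diagonal argument on a \emph{finitely-branching} tree of extensions. That premise is false precisely in the case that matters: RFUM controls the decomposition of the \emph{range} of each edge (finitely many minimal infinite emitters plus a finite set of regular vertices, as in Lemma~\ref{range}), but it says nothing about how many edges a minimal infinite emitter $B$ emits — by definition $\varepsilon(B)$ is infinite, so $D_{(\beta,B)}$ contains the infinitely many pairwise disjoint one-edge-extension cylinders $D_{(\beta e,\, r(e))}$, $e\in\varepsilon(B)$. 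The tree of path extensions is therefore infinitely branching at every infinite emitter, and a finite-branching compactness argument cannot get off the ground. The correct mechanism — and the entire reason the points of $X_{fin}$ are included in the space — is that the finite ultrapath $(\beta,B)$, with $B$ a minimal infinite emitter, is itself a point of $X$ whose basic neighborhoods $D_{(\beta,B),F}$ contain \emph{all but finitely many} of the branches $D_{(\beta e,\, r(e))}$. Thus $(\beta,B)$ acts as a point at infinity compactifying the infinite branching (any sequence of points passing through infinitely many distinct edges of $\varepsilon(B)$ converges to $(\beta,B)$), and compactness of the basic sets $D_{(\beta,B),F}$ is proved by a one-point-compactification-type argument combined with the finite data supplied by RFUM at the regular vertices; this is the content of the topological results of \cite{GRultrapartial} on which \cite{TascaDaniel} relies. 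Until your finite-branching step is replaced by an argument of this kind, local compactness of $X$ — the one place where RFUM and the inclusion of minimal infinite emitters as points genuinely enter — remains unproved.
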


In the next subsection, we recall a family of metrics defined in $X$ and prove results about them, which are useful in the computation of the metric entropy of the Deaconu-Renault system associated with an ultragraph and are interesting on their own.

\subsection{A family of metrics in the ultragraph shift space}\label{familyofmetrics}

Based on ideas in \cite{OTW} and \cite{Webster}, an explicit family of metrics for the space $X$ built in the previous sections was described in \cite{BrunoDaniel}. We recall it below.

To define the metric, first we list the elements of $\mathfrak{p}$ as $\mathfrak{p} = \{ p_1, p_2, p_3, \ldots  \}$. Then, for each $x,y \in X$, we define
\begin{equation}\label{definmetricanova}
d_X (x,y) := \begin{cases} 1/2^i & \text{$i \in \N$ is the smallest value such that $p_i$ is an initial} \\  & \text{ \ \ \ segment of one of $x$ or $y$ but not the other,} \\
0 & \text{if $x=y$}.
\end{cases}
\end{equation} 

\begin{remark}\label{ordem} The metric $d_X$ induces the topology on $X$ described in the previous section (see \cite{BrunoDaniel}).
Moreover, $d_X$ depends on the order we choose for $\mathfrak{p} = \{ p_1, p_2, p_3, \ldots  \}$. Nevertheless, we show below that the metrics induced by different orderings of $\p$ are uniformly equivalent.
\end{remark}

\begin{proposition}\label{sono}
Let $\GG$ be an ultragraph and $d_i$, $i=1,2$, be metrics on $X$ obtained from possibly distinct enumerations of $\mathfrak{p}$. Then, $d_1$ and $d_2$ are uniformly equivalent.
\end{proposition}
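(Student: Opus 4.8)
The plan is to unwind the definition of uniform equivalence and reduce the whole statement to a single symmetric claim about the two enumerations. Recall that $d_1$ and $d_2$ are uniformly equivalent precisely when both identity maps $\mathrm{Id}:(X,d_1)\to(X,d_2)$ and $\mathrm{Id}:(X,d_2)\to(X,d_1)$ are uniformly continuous. Since the roles of $d_1$ and $d_2$ are interchangeable, it suffices to prove one of these, say that $\mathrm{Id}:(X,d_1)\to(X,d_2)$ is uniformly continuous, and then invoke symmetry for the other direction.

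The key observation is that the data truly controlling the metric value is independent of the chosen ordering of $\mathfrak{p}$. For $x,y\in X$ with $x\neq y$, set
$$S(x,y)=\{p\in\mathfrak{p}: p \text{ is an initial segment of exactly one of } x \text{ and } y\}.$$
This set depends only on $x$ and $y$, not on any listing of $\mathfrak{p}$. Writing the two enumerations as $\mathfrak{p}=\{q_1,q_2,\ldots\}$ and $\mathfrak{p}=\{r_1,r_2,\ldots\}$, formula \eqref{definmetricanova} says $d_1(x,y)=1/2^{i}$, where $i$ is the least index with $q_i\in S(x,y)$, and $d_2(x,y)=1/2^{j}$, where $j$ is the least index with $r_j\in S(x,y)$ (with $d_1(x,y)=d_2(x,y)=0$ when $x=y$). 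Thus each metric is governed by the first index at which its own enumeration meets the common set $S(x,y)$.

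Now fix $\varepsilon>0$ and choose $N\in\N$ with $1/2^N<\varepsilon$. The finitely many paths $r_1,\ldots,r_{N-1}$ of the second enumeration also appear in the first one, so we may choose $M\in\N$ with $\{r_1,\ldots,r_{N-1}\}\subseteq\{q_1,\ldots,q_{M-1}\}$ (take $M-1$ to be the largest index these paths carry in the first enumeration). Put $\delta=1/2^{M-1}$. If $d_1(x,y)<\delta$, then, since $d_1$ takes values in $\{0\}\cup\{1/2^i:i\in\N\}$, the least index $i$ with $q_i\in S(x,y)$ satisfies $i\geq M$; hence none of $q_1,\ldots,q_{M-1}$, and in particular none of $r_1,\ldots,r_{N-1}$, lies in $S(x,y)$. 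Therefore the least index $j$ with $r_j\in S(x,y)$ is at least $N$, giving $d_2(x,y)\leq 1/2^N<\varepsilon$ (the case $x=y$ being immediate). This establishes uniform continuity of $\mathrm{Id}:(X,d_1)\to(X,d_2)$.

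Finally, interchanging the two enumerations yields uniform continuity of $\mathrm{Id}:(X,d_2)\to(X,d_1)$ by the identical argument, so $d_1$ and $d_2$ are uniformly equivalent. I do not anticipate a genuine obstacle: the one conceptual point is to isolate the enumeration-free set $S(x,y)$ and recognize that each metric is determined by the first index at which its enumeration hits $S(x,y)$. Once that is in place, the remaining $\varepsilon$–$\delta$ bookkeeping — using that a finite initial segment of one enumeration sits inside a finite initial segment of the other — is routine, and the symmetry of the setup halves the work.
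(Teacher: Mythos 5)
Your proof is correct and follows essentially the same approach as the paper: both arguments use that the finitely many paths $r_1,\ldots,r_{N-1}$ of one enumeration sit inside a finite initial segment of the other, then conclude via the same $\varepsilon$--$\delta$ bookkeeping and symmetry. Your explicit introduction of the enumeration-free set $S(x,y)$ is a nice clarification but does not change the substance of the argument.
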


\begin{proof}
Let $\{p_0',p_1',p_2',\ldots \}$ and $\{p_0'',p_1'',p_2'',\ldots \}$ be two enumerations of $\p$ and let $d_1$ and $d_2$ be the associated metrics, respectively. 
We prove that $id:(X,d_1) \rightarrow (X,d_2)$ is uniformly continuous. 
Let $\varepsilon>0$ and choose $J$ such that $\frac{1}{2^J}<\varepsilon$. Let $k>0$ be such that, for all $0\leq i \leq J$, there exists $l_i<k$ such that $p_{l_i}'=p_i''$. Now, notice that if $d_1(x,y)<\frac{1}{2^k}$ then $d_2(x,y)<\frac{1}{2^J}<\varepsilon$, and so $id$ is uniformly continuous as desired. The proof that $id^{-1}$ is uniformly continuous is analogous.
\end{proof}

The proposition above motivates the definition of the metric entropy of an ultragraph.

\begin{definition}\label{janeiro}
Let $\mathcal G$ be an ultragraph and $(X,\sigma)$ be the associated Deaconu-Renault system. By the metric entropy of $\mathcal G$ we mean the metric entropy of the system $(X,\sigma)$, with $X$ equipped with the metric $d_X$ described above.
\end{definition}

For the next result, we need to recall the following lemma.

\begin{lemma}\label{range}(\cite[Lemma 2.10]{GilDan}) Let $\GG$ be an ultragraph with no sinks that satisfies Condition~(RFUM). Then, each $A\in \GG^0$ can be written uniquely as $A = \displaystyle \bigcup_{n=1}^k A_n,$ where there exists an unique $k$ such that $|A_k|<\infty$ and $\varepsilon(A_k)< \infty$, $A_j$ is a minimal infinite emitter for $j\neq k$, and $A_j \cap A_k = \emptyset$ for all $j\neq k$. 
\end{lemma}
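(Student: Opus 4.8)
The plan is to exhibit the family $\mathcal{D}$ of all sets admitting a decomposition of the stated type, show it contains the generators of $\GG^0$ and is closed under finite unions and nonempty finite intersections, and then invoke the minimality of $\GG^0$. Concretely, let $\mathcal{D}$ consist of those $A\in P(G^0)$ that can be written as $A=M_1\cup\cdots\cup M_m\cup C$, where each $M_i$ is a minimal infinite emitter, $C$ is a finite set of vertices with $\varepsilon(C)$ finite, and $C\cap M_i=\emptyset$ for all $i$ (allowing $m=0$ or $C=\emptyset$). First I would check that the generators lie in $\mathcal{D}$: a singleton $\{v\}$ is itself a minimal infinite emitter when $v$ is an infinite emitter, and otherwise is a finite non-emitter set; and each range $r(e)$ lies in $\mathcal{D}$ directly by Condition~(RFUM), after deleting from its finite part any vertex already contained in one of the minimal infinite emitters, so as to secure disjointness. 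Closure under finite unions is then immediate: concatenate the lists of minimal infinite emitters, union the finite parts, and again subtract the infinite-emitter pieces from the finite part.

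The main obstacle is closure under nonempty finite intersection. Writing $A=\bigcup_i M_i\cup C$ and $B=\bigcup_j N_j\cup D$ in $\mathcal{D}$ and distributing, $A\cap B$ is the union of the terms $M_i\cap N_j$, $M_i\cap D$, $C\cap N_j$ and $C\cap D$. Every term involving $C$ or $D$ is a subset of a finite non-emitter set, hence finite with finite $\varepsilon$. The crux is the analysis of $M_i\cap N_j$ (nonempty, hence in $\GG^0$): since $M_i\cap N_j\subseteq M_i$ and $M_i$ is a minimal infinite emitter, if $M_i\cap N_j$ were an infinite emitter it would equal $M_i$, and symmetrically $N_j$, forcing $M_i=N_j$. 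Thus when $M_i\neq N_j$ the set $M_i\cap N_j$ has $\varepsilon(M_i\cap N_j)$ finite; here I would use the no-sinks hypothesis, which makes $s\colon\varepsilon(M_i\cap N_j)\to M_i\cap N_j$ surjective and hence forces $M_i\cap N_j$ to be finite as a set of vertices. Collecting the finitely many genuinely infinite terms (those with $M_i=N_j$) as minimal infinite emitters, gathering all remaining terms into a single finite non-emitter part $C'$, and subtracting the emitters from $C'$, we obtain $A\cap B\in\mathcal{D}$. Since $\GG^0$ is the smallest family containing the generators and closed under these two operations, $\GG^0\subseteq\mathcal{D}$, which gives existence.

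For uniqueness I would show that the minimal infinite emitter pieces are canonically determined by $A$: namely, $\{A_j:j\neq k\}$ is exactly the set of minimal infinite emitters contained in $A$. Given such an $M\subseteq A$, the decomposition $A=\bigcup_{j\neq k}A_j\cup A_k$ yields $\varepsilon(M)\subseteq\bigcup_{j\neq k}\varepsilon(A_j)\cup\varepsilon(A_k)$; as $\varepsilon(A_k)$ is finite, $\varepsilon(M)$ infinite, and the union is finite, some $A_j$ must share infinitely many edges with $M$, so $\varepsilon(M\cap A_j)$ is infinite, and the minimality argument of the previous paragraph applied to $M\cap A_j$ forces $M=A_j$. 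Hence the infinite-emitter pieces are determined, and since they are disjoint from $A_k$, the finite part is $A_k=A\setminus\bigcup_{j\neq k}A_j$; in particular $k$ is the unique index whose piece has finite $\varepsilon$, because every minimal infinite emitter has infinite $\varepsilon$. This argument also shows that $A$ contains only finitely many minimal infinite emitters, so the union $\bigcup_{n=1}^{k}A_n$ is genuinely finite, completing the proof.
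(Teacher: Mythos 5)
The paper does not actually prove this lemma: it is imported by citation from \cite{GilDan} (Lemma~2.10 there), so there is no internal proof to compare your argument against. Judged on its own, your proof is correct and self-contained. The three load-bearing steps all check out. First, the generator-and-closure scheme is legitimate: since $\GG^0$ is by definition the \emph{smallest} family containing the singletons and the ranges $r(e)$ and closed under finite unions and nonempty finite intersections, exhibiting $\mathcal{D}$ with these properties does give $\GG^0\subseteq \mathcal{D}$, and there is no circularity in defining $\mathcal{D}$ via minimal infinite emitters (these are defined relative to the already-existing $\GG^0$). Second, the crux of the intersection step is exactly right and is where both standing hypotheses enter: for distinct minimal infinite emitters $M\neq N$ with $M\cap N\neq\emptyset$, the set $M\cap N$ lies in $\GG^0$, so minimality forces $\varepsilon(M\cap N)=\varepsilon(M)\cap\varepsilon(N)$ to be finite, and the no-sinks hypothesis makes $s\colon\varepsilon(M\cap N)\to M\cap N$ surjective, whence $M\cap N$ is finite as a vertex set with finite $\varepsilon$. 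Third, your uniqueness argument correctly identifies $\{A_j : j\neq k\}$, in any valid decomposition, with the set of \emph{all} minimal infinite emitters contained in $A$ (via the pigeonhole on $\varepsilon(M)\subseteq\bigcup_{j\neq k}\varepsilon(A_j)\cup\varepsilon(A_k)$ and the identity $\varepsilon(M)\cap\varepsilon(A_j)=\varepsilon(M\cap A_j)$), after which $A_k=A\setminus\bigcup_{j\neq k}A_j$ is forced and the distinguished index is unique because minimal infinite emitters have infinite $\varepsilon$. The only caveat worth recording is the trivial one that uniqueness is up to reordering and deletion of repeated emitter pieces, which your canonical description handles implicitly.
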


Next, we prove that the metric $d_X$ is uniformly equivalent to a metric in which it is not necessary to list all elements of $\mathfrak p$. 
\begin{proposition}\label{mais metricas equivalentes}
Let $\GG$ be an ultragraph that satisfies Condition~(RFUM) and $d_X$ be the metric on $X$ obtained from an enumeration of $\mathfrak{p}$, say $\{p_0',p_1',p_2',\ldots \}$. Let $\{p_0'',p_1'',p_2'',\ldots \}$ be an enumeration of the subset of ultrapaths $$S=\{(\alpha,A): (\alpha,A)\in \p, \text{ and } A \text{ is either a vertex or a minimal infinite emitter}\},$$ and let $d_1$ be the associated metric (which is defined analogously to $d_X$, see Equation~\ref{definmetricanova}). Then, $d_X$ and $d_1$ are uniformly equivalent.
\end{proposition}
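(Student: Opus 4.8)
The plan is to prove uniform equivalence by showing that both identity maps $\mathrm{id}\colon (X,d_X)\to (X,d_1)$ and $\mathrm{id}\colon (X,d_1)\to(X,d_X)$ are uniformly continuous. Since $S\subseteq \p$, the first map is handled by exactly the argument of Proposition~\ref{sono}: given $\varepsilon>0$ pick $J$ with $2^{-J}<\varepsilon$, locate the finitely many $S$-elements $p_0'',\dots,p_J''$ inside the enumeration of $\p$ used for $d_X$, and choose $k$ bounding their positions; then $d_X(x,y)<2^{-k}$ forces $x$ and $y$ to share those $S$-elements as initial segments, whence $d_1(x,y)<2^{-J}<\varepsilon$. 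All the work is therefore in the reverse map, and the key is the following reduction: for every $(\alpha,A)\in\p$ there is a \emph{finite} set $T_{(\alpha,A)}\subseteq S$ such that, for every point $z\in X$, the ultrapath $(\alpha,A)$ is an initial segment of $z$ if and only if some element of $T_{(\alpha,A)}$ is an initial segment of $z$.

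To build $T_{(\alpha,A)}$ I would apply Lemma~\ref{range} to write $A=\bigcup_{n=1}^{k}A_n$, with $A_k$ finite and $\varepsilon(A_k)<\infty$ and the remaining $A_j$ minimal infinite emitters; writing $A_k=\{v_1,\dots,v_m\}$, set $T_{(\alpha,A)}=\{(\alpha,A_j):j\neq k\}\cup\{(\alpha,\{v_i\}):1\le i\le m\}$, each member of which lies in $S$ because its range is a minimal infinite emitter or a single vertex, and in $\p$ because each piece is contained in $A\subseteq r(\alpha)$. The equivalence is then checked by cases on $z$. If $z$ is infinite, or is a finite path strictly longer than $\alpha$, then ``$(\alpha,A)$ is an initial segment of $z$'' amounts to a single source lying in $A=\bigcup_n A_n$, which holds iff that source lies in some $A_j$ or equals some $v_i$, i.e. iff one element of $T_{(\alpha,A)}$ is an initial segment of $z$. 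The only delicate case is when $z=(\alpha,B)$ has the same length as $\alpha$, where the condition becomes $B\subseteq A$ with $B$ a minimal infinite emitter; here I would note that $B\cap A_k$ emits only finitely many edges, so $\varepsilon\big(\bigcup_{j\neq k}(B\cap A_j)\big)$ is infinite and hence $B\cap A_{j_0}$ is an infinite emitter for some $j_0\neq k$, and then minimality of $B$ forces $B\cap A_{j_0}=B$, i.e. $B\subseteq A_{j_0}$. Thus $(\alpha,A_{j_0})\in T_{(\alpha,A)}$ is an initial segment of $z$; the reverse implication is immediate since each piece is contained in $A$.

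With the reduction in hand, the reverse uniform continuity follows by the same bookkeeping as before: given $\varepsilon>0$ choose $J$ with $2^{-J}<\varepsilon$, form the finite set $T=\bigcup_{i\le J}T_{p_i'}\subseteq S$ (where $p_0',p_1',\dots$ enumerates $\p$ for $d_X$), and let $N$ bound the positions of the elements of $T$ in the enumeration of $S$ used for $d_1$. If $d_1(x,y)<2^{-N}$ then $x$ and $y$ share, as initial segments, every $S$-element of index at most $N$, in particular every element of $T$; since for each $i\le J$ the status of $p_i'$ as an initial segment is the disjunction over $T_{p_i'}$ of the statuses of its members, $x$ and $y$ also agree on each $p_i'$, giving $d_X(x,y)<2^{-J}<\varepsilon$. (The same reduction incidentally shows that $d_1$ separates points, so it is genuinely a metric.) I expect the main obstacle to be precisely the equal-length case of the reduction lemma: one must rule out a minimal infinite emitter $B\subseteq A$ that is ``spread across'' several pieces $A_j$, and the point that saves the argument is that minimality of $B$ upgrades ``$B\cap A_{j_0}$ is an infinite emitter'' to ``$B\cap A_{j_0}=B$'', while the finiteness of the non-emitting part $A_k$ supplied by Lemma~\ref{range} is what guarantees such a $j_0$ exists.
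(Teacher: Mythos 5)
Your proposal is correct and follows essentially the same route as the paper: the easy direction is dispatched exactly as in Proposition~\ref{sono}, and your finite set $T_{(\alpha,A)}$ is precisely the paper's set $\mathcal{D}$ built from Lemma~\ref{range} (minimal infinite emitter pieces plus vertex singletons of the finite part), with the same minimality argument settling the equal-length case before the final enumeration bookkeeping. If anything, packaging the decomposition as an explicit initial-segment reduction lemma is cleaner than the paper's argument by contradiction, whose parenthetical treatment of that delicate case addresses only $|x|=0$.
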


\begin{proof}
The proof that $d_1$ is uniformly continuous with respect to $d_X$ is analogous to the proof of Proposition~\ref{sono}. We show that $d_X$ is uniformly continuous with respect to $d_1$. 
Given $\varepsilon>0$, let $J>0$ be such that $\frac{1}{2^J}<\varepsilon$. 

Use Lemma~\ref{range} and write each $r(p_i')$, $0\leq i \leq J$ as $r(p_i')= \cup A_{n_i}^i$, with $A_{k_i}^i$ the distinguished set with $\varepsilon(A_{k_i}^i)<\infty$. 
Let $\mathcal D $ be the set whose elements are the sets $A_{n_i}^i$, with $n_i\neq k_i$, and the unitary sets formed by vertices $v$ in $A_{k_i}^i$, $0\leq i \leq J$. Clearly, $\mathcal D$ is finite.  Write each $p_i'$ as $p_i'=(\alpha_i,A_i)$ and let \[k=\max\{j: p_j''=(\alpha_i,A) \text{ for some A}\in \mathcal D, 1\leq i \leq J\}.\]

Then, $d_1(x,y)<\frac{1}{2^k}$ implies that $d_X(x,y)<\frac{1}{2^J}<\varepsilon$. Indeed, suppose that $d_1(x,y)<\frac{1}{2^k}$ and $d_X(x,y)\geq\varepsilon$. In this case, there exists $p_i'=(\alpha_i,A)$, $i< J$ that is an initial segment of $x$ and not from $y$ or vice-versa. Assume, without loss of generality, that $p_i'$ is an initial segment of $x$ and not from $y$. Decompose $A$ as in Lemma~\ref{range}, say, $A=\cup_{l=1}^n A_l$. Then, there must be one $A_{l_0}$ such that $(\alpha_i,A_{l_0})$ is an initial segment of $x$ but not of $y$ (notice that if $|x|=0$, since $x\in X$, then $x$ is a minimal infinite emitter. Therefore, $x\cap A_l $ is finite for all $l$, and hence there exists $l_0$ 
such that $A_{l_0}\cap x = x$). If $A_{l_0}$ is an infinite emitter, then we are done. If not, then $A_{l_0}$ is a finite set with $\varepsilon (A_{l_0})<\infty$ and there must be a vertex $v$ in $A_{l_0}$ such that $(\alpha_i,v)$ is an initial segment of $x$ but not of $y$. In both cases, we conclude that $d_1(x,y)\geq \frac{1}{2^k}$, a contradiction. Hence, $d_X$ is uniformly continuous with respect to $d_1$ and the proof is finished.
\end{proof}

We finish this subsection by observing that the metric entropy can be computed using any of the metrics presented in this subsection. 

\begin{corollary}\label{enum_met}
Let $\GG$ be an ultragraph. Then the metric entropy $\mathcal G$ is independent of the choice of enumeration of $\p$. Moreover, the metric entropy can be computed using the metric $d_1$ defined above.
\end{corollary}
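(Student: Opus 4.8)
The plan is to deduce Corollary~\ref{enum_met} directly from the invariance of metric entropy under uniformly equivalent metrics, which is exactly Proposition~\ref{invariance by the metric}, combined with the two uniform-equivalence results just proved. First I would recall that the metric entropy of $\GG$ is, by Definition~\ref{janeiro}, the metric entropy $h_{d_X}(\sigma)$ of the associated Deaconu-Renault system $(X,\sigma)$, where $d_X$ is built from a fixed enumeration of $\p$.

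For the first assertion (independence of the enumeration), let $d_X$ and $d_X'$ be the metrics on $X$ obtained from two possibly distinct enumerations of $\p$. By Proposition~\ref{sono}, $d_X$ and $d_X'$ are uniformly equivalent. Applying Proposition~\ref{invariance by the metric} with $d=d_X$ and $d'=d_X'$ then yields $h_{d_X}(\sigma)=h_{d_X'}(\sigma)$. Hence the value of the metric entropy does not depend on which enumeration of $\p$ is chosen, which is precisely the first claim.

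For the second assertion, let $d_1$ be the metric associated with an enumeration of the subset $S\subseteq\p$ as in Proposition~\ref{mais metricas equivalentes}. That proposition asserts that $d_X$ and $d_1$ are uniformly equivalent. Invoking Proposition~\ref{invariance by the metric} once more, now with $d=d_X$ and $d'=d_1$, gives $h_{d_X}(\sigma)=h_{d_1}(\sigma)$. Therefore the metric entropy of $\GG$ can be computed using $d_1$, as desired.

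I do not expect any serious obstacle here: the corollary is essentially a bookkeeping consequence of results already established. The only point requiring a little care is to confirm that the hypotheses of Proposition~\ref{invariance by the metric} are met, namely that we are comparing two genuine metrics on the \emph{same} space $X$ that are uniformly equivalent; this is guaranteed by Propositions~\ref{sono} and~\ref{mais metricas equivalentes}, and the underlying system $(X,\sigma)$ is unchanged, so $h_{d_X}(\sigma)$, $h_{d_X'}(\sigma)$, and $h_{d_1}(\sigma)$ all refer to the same local homeomorphism on the same topological space with only the metric varying.
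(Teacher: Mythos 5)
Your proposal is correct and follows exactly the paper's own argument: Proposition~\ref{sono} plus Proposition~\ref{invariance by the metric} for independence of the enumeration, and Proposition~\ref{mais metricas equivalentes} plus Proposition~\ref{invariance by the metric} for the statement about $d_1$. In fact, your write-up is slightly more complete than the paper's one-line proof, which cites only Propositions~\ref{sono} and~\ref{invariance by the metric} and leaves the role of Proposition~\ref{mais metricas equivalentes} implicit.
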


\begin{proof}
This follows from Propositions \ref{sono} and \ref{invariance by the metric}.
\end{proof}

\begin{remark}\label{enumeracao tabajara} Following Proposition~\ref{mais metricas equivalentes}, the metric entropy of $\mathcal G$ is independent of the enumeration of the set $S$ (defined in Proposition~\ref{mais metricas equivalentes}). So, to compute the metric entropy we can fix the most adequate enumeration of S. Next we fix an enumeration, which will be used later. Let $\mathcal{G}$ be a countable ultragraph, with $\mathcal{G}^1=\{e_i:i\in \N\}$ and enumerate the set $ \{A\in \mathcal G^0: A \text{ is a vertex or an minimal infinite emitter}\}$ as $\{B_i:i\in \N\}$. To begin, list all the elements $(x,A)$, where $x$ belongs to the set of paths of length less or equal to $1$ generated by $\{B_1,e_1\}$; next, list all the elements $(x,A)$, where $x$ belongs to the set of paths of length less or equal to $2$ generated by $\{B_1, B_2,e_1,e_2\}$ without repeating elements previously listed; now, list all the elements $(x,A)$, where $x$ belongs to the set of paths of length less or equal to $3$ generated by $\{B_1,B_2,B_3, e_1,e_2,e_3\}$ without repeating elements previously listed; continue the procedure successively. 
\end{remark}

\subsection{The entropy of a row-finite graph.}

In this section, we focus on row-finite graphs and compute the entropy of the associated Deaconu-Renault system $(X,\sigma)$, connecting our results with the existing literature. Recall that a graph can be seen as an ultragraph such that the range map takes values on the set of vertices (not in the power set of vertices). More commonly, a graph is a quadruple $(E^0, E^1,r,s)$, where $E^0$ stands for the set of vertices, $E^1$ stands for the set of edges, and $r,s$ are the range and source maps, respectively. The notation introduced in Section~\ref{section ultragraphs} applies to graphs and we make the addition of the following definitions.

\begin{definition}
We say that a graph $(E^0, E^1,r,s)$ is row-finite if $|s^{-1}(v)|<\infty$ for all $v\in E^0$. We say that a row-finite graph is locally finite if $|r^{-1}(v)|<\infty$ for all $v\in E^0$.
\end{definition}

Notice that, in the row-finite case, $X$ is the usual edge shift space associated with a graph (that is, $X$ consists of all infinite paths equipped with the usual cylinder set topology) and $\sigma$ is the one-sided shift map. This allows us to connect our entropy theory with the existing literature. The first result we need is the following.

\begin{proposition}
\label{dorsal} Let $E$ be a row-finite graph. 
Then, the metric defined in Equation~(\ref{definmetricanova}) is uniformly continuous with respect to the usual metric associated with an edge shift space, that is, the metric given by $d(x,y)=\frac{1}{2^i}$, where $i$ is the smallest index such that $x_i\neq y_i$. If the graph is finite, then the metrics are uniformly equivalent.
\end{proposition}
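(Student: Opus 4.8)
The plan is to first reduce both metrics to statements about \emph{initial segments}. Since $E$ is row-finite and has no sinks, no set in $\GG^0$ can be an infinite emitter, so $M_\alpha=\emptyset$ for every $\alpha$, $X_{fin}=\emptyset$, and $X=\mathfrak{p}^{\infty}$ is the usual edge shift space (as already noted before the statement). Identifying the graph with an ultragraph whose range map lands in singletons, the ultrapath space $\mathfrak{p}$ consists, under the identification $\alpha\mapsto(\alpha,r(\alpha))$, of the finite paths together with the length-zero paths in $\GG^0$. With this, the usual metric $d$ records the first edge at which two infinite paths disagree, while $d_X=1/2^{i}$ records the first $p_i$ in the fixed enumeration that is an initial segment of exactly one of the two points.

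The elementary fact I would isolate is a monotonicity between \emph{edge agreement} and \emph{initial-segment agreement}: if infinite paths $x,y$ share their first $m$ edges (in particular the same source), then for every ultrapath $p$ with $|p|\le m$, $p$ is an initial segment of $x$ if and only if it is an initial segment of $y$. For $|p|\ge 1$ this holds because $p=(\beta,r(\beta))$ is an initial segment of an infinite path exactly when $\beta$ equals its first $|\beta|$ edges; for $|p|=0$, i.e.\ $p=A\in\GG^{0}$, it reduces to whether the common source lies in $A$. This converts statements about $d$ into statements about which $p_i$ can distinguish $x$ from $y$.

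For uniform continuity of $Id\colon(X,d)\to(X,d_X)$, given $\varepsilon>0$ I would choose $J$ with $1/2^{J}<\varepsilon$ and set $L=\max\{|p_i|:1\le i\le J\}$. Taking $\delta=1/2^{\max(L,1)}$, any $x,y$ with $d(x,y)<\delta$ agree on their first $\max(L,1)$ edges, so by the fact above none of $p_1,\dots,p_J$ distinguishes them, whence $d_X(x,y)<1/2^{J}<\varepsilon$. Notice this direction uses only that the finitely many paths $p_1,\dots,p_J$ have bounded length, so it holds for every row-finite graph; this gives the first claim.

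For the finite case I must also prove $Id\colon(X,d_X)\to(X,d)$ uniformly continuous, and this is where finiteness is essential: in a finite graph there are only finitely many ultrapaths of length $\le J$, so all of them occur among $p_1,\dots,p_K$ for some $K$. Given $\varepsilon>0$, I would pick $J$ with $1/2^{J}<\varepsilon$, choose such a $K$, and set $\delta=1/2^{K}$. If $d_X(x,y)<\delta$, then no $p_i$ with $i\le K$ distinguishes $x$ from $y$; were $x,y$ to differ at some edge of index $i\le J$, the initial segment $(x_1\cdots x_i,r(x_i))$ would be an initial segment of $x$ but not of $y$ and would lie among $p_1,\dots,p_K$, a contradiction. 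Hence $x,y$ agree on their first $J$ edges and $d(x,y)<1/2^{J}<\varepsilon$. I expect the reverse direction to be the only real obstacle: the argument collapses for infinite row-finite graphs, where infinitely many short paths can be enumerated arbitrarily late so that no uniform $K$ (hence no uniform $\delta$) exists, which is exactly why uniform equivalence is asserted only in the finite case.
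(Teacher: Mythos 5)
Your proposal is correct and follows essentially the same route as the paper: for $Id\colon(X,d)\to(X,d_X)$ you bound the lengths of the first $J$ enumerated ultrapaths and take $\delta$ small in $d$, and in the finite case you use that all ultrapaths of length at most $J$ occur among $p_1,\dots,p_K$ and take $\delta=1/2^K$. The only difference is that you make explicit some details the paper leaves implicit, namely the treatment of length-zero ultrapaths (elements of $\GG^0$) as initial segments and the final verification that $d_X(x,y)<\delta$ forces agreement on the first $J$ edges.
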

\begin{proof}

Let $E$ be a row-finite graph, $d_X$ be the metric of Equation~\ref{definmetricanova}, and $d$ as in the statement of the proposition. We show that $d_X$ is uniformly continuous with respect to $d$, i.e., $Id_X:(X,d)\rightarrow (X,d_X)$ is uniformly continuous. 
List the elements of $\p$ as $\{p_1,p_2,\ldots \}$. Given an $\varepsilon>0$, choose $k\in \N$ such that $\frac{1}{2^k}<\varepsilon$ and let $m=\displaystyle\max_{1\leq i\leq k}|p_i|$. Define $\delta=\frac{1}{2^{m+1}}$. Notice that if $d(x,y)<\delta$, then $x_i=y_i$ for all $i\in\{1,\ldots , m\}$, and so any $j$ such that $p_j$ is an initial segment of one of $x$ or $y$ but not the other must satisfy $|p_j|\geq m+1$. Hence, $d_X(x,y)<\frac{1}{2^k}<\varepsilon$. 

Now, suppose that the graph is finite. We show that $Id_X:(X,d_X)\rightarrow (X,d)$ is uniformly continuous. Given $\varepsilon>0$, choose $k\in\N$ such that $\frac{1}{2^k}<\varepsilon$. Consider the set $A=\{p_i:|p_i|\leq k\}$, which is finite. Let $j=\max\{i:p_i\in A\}$ and define $\delta=\frac{1}{2^{j+1}}$. Then, $d_X(x,y)<\delta$ implies that $d(x,y) < \varepsilon$ as desired. 

\end{proof}

\begin{remark}
Notice that the condition that the graph is finite in the proposition above is sharp. Indeed, even for locally finite graphs the final statement of the proposition above does not hold. For example, for the graph with a countable number of vertices $\{v_i\}_{i\in \N}$, and edges $\{e_i,f_i\}_{i\in \N}$ such that $s(e_i)=s(f_i)=v_i$ and $r(e_i)=r(f_i)=v_{i+1}$ we have that $Id_X:(X,d_X)\rightarrow (X,d)$ is not uniformly continuous.
 \begin{center}
\setlength{\unitlength}{2mm}
	\begin{picture}(70,10)
		\put(1,3){\scriptsize$v_1$}
		\put(2,5){\circle*{0.7}}
		\put(11,3){\scriptsize$v_2$}
		\put(12,5){\circle*{0.7}}
		\put(21,3){\scriptsize$v_3$}
		\put(22,5){\circle*{0.7}}
		\put(31,3){\scriptsize$v_4$}
		\put(32,5){\circle*{0.7}}
		\put(41,3){\scriptsize$v_5$}
		\put(42,5){\circle*{0.7}}

	    \qbezier(2,5)(7,8)(12,5)
		\put(12,5){\vector(2,-1){0}}
		\qbezier(12,5)(17,8)(22,5)
		\put(22,5){\vector(2,-1){0}}	
		\qbezier(22,5)(27,8)(32,5)
		\put(32,5){\vector(2,-1){0}}
        \qbezier(32,5)(37,8)(42,5)
    	\put(42,5){\vector(2,-1){0}}
	    
		\qbezier(2,5)(7,2)(12,5)
		\put(12,5){\vector(2,1){0}}
		\qbezier(12,5)(17,2)(22,5)
		\put(22,5){\vector(2,1){0}}	
		\qbezier(22,5)(27,2)(32,5)
		\put(32,5){\vector(2,1){0}}
        \qbezier(32,5)(37,2)(42,5)
    	\put(42,5){\vector(2,1){0}}
          
        \put(46,4.6){\scriptsize$\ldots $}
	    \put(45,7){\scriptsize$\ldots $}
        \put(45,2.2){\scriptsize$\ldots $}

        \put(6,7){\scriptsize$e_1$}
		\put(16,7){\scriptsize$e_2$}
		\put(26,7){\scriptsize$e_3$}
		\put(36,7){\scriptsize$e_4$}
		
		\put(6,2.2){\scriptsize$f_1$}
		\put(16,2.2){\scriptsize$f_2$}
		\put(26,2.2){\scriptsize$f_3$}
		\put(36,2.2){\scriptsize$f_4$}
		
		\put(51,3){\scriptsize$v_{k}$}
		\put(52,5){\circle*{0.7}}
		\put(61,3){\scriptsize$v_{k+1}$}
		\put(62,5){\circle*{0.7}}
		
		\put(56,7){\scriptsize$e_k$}
		\qbezier(52,5)(57,8)(62,5)
		\put(62,5){\vector(2,-1){0}}
		
		\put(56,2){\scriptsize$f_k$}
		\qbezier(52,5)(57,2)(62,5)
		\put(62,5){\vector(2,1){0}}
		
		\put(66,4.6){\scriptsize$\ldots $}
	    \put(65,7){\scriptsize$\ldots $}
        \put(65,2.2){\scriptsize$\ldots $}
\end{picture}
\end{center}
 To see that this is true, let $\{p_1,p_2,\ldots \}$ be an enumeration of $\p$ such that: for each $k\in \N$ there exists $i_0\in \N$ such that $e_k=p_{i_0}$, $f_k=p_{i_0+1}$, and the edges $e_k$ and $f_k$ do not appear in any $p_i$ with $i<i_0$. 
 Now, let $(i_k)_{k\in \N}$ be the subsequence such that $p_{i_k}=e_k$ for each $k\in \N$ and, for each $k\in \N$, let $x^k,y^k$ be infinite paths such that the first edge of $x^k$ is $e_k$ and the first edge of $y^k$ is $f_k$. Then, we have that $d_X(x^k,y^k)=\frac{1}{2^{i_k}}$, which converges to zero as $k$ increases, and $d(x^k,y^k)=\frac{1}{2}$ for each $k\in \N$.

\end{remark}

For finite graphs, using Proposition~\ref{dorsal}, we obtain that the metric entropy of the graph (as defined in Definition~\ref{janeiro}) can be computed classically via the metric entropy definition. It can also be computed via the topological definition of Section~\ref{topoent}. We make this precise below (recall that $\mathfrak{p}^n$ denotes the paths of length $n$ in the graph).

\begin{proposition}\label{entropy of finite graphs}
Let $E$ be a finite graph and $(X,\sigma)$ be the associated Deaconu-Renault system. Then, $$h_d(\sigma) = \lim_{n\rightarrow \infty} \frac{1}{n} log |\mathfrak p^n|.$$ 
\end{proposition}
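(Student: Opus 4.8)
The plan is to reduce the statement to a direct, classical computation. First I would record the structure of $(X,\sigma)$ for a finite graph $E$. Since $E^1$ is finite, there are no infinite emitters, hence no minimal infinite emitters, so $X_{fin}=\emptyset$ and $X=\mathfrak{p}^\infty$ is exactly the usual one-sided edge shift space. Because there are no sinks, $X^0\subseteq X_{fin}=\emptyset$, and therefore $Dom(\sigma)=X_\infty^{\geq 1}=X\setminus X^0=X$; consequently $Dom(\sigma^{n-1})=X$ and $I_n(x)=\{0,\dots,n-1\}$ for every $x$ and $n$. Moreover $X$, being a closed subset of the compact product $(E^1)^{\mathbb N}$, is compact. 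Thus the general Deaconu--Renault machinery collapses to the classical case of a total continuous self-map of a compact metric space. By Proposition~\ref{dorsal} the metric $d_X$ of \eqref{definmetricanova} is uniformly equivalent to the usual edge-shift metric $d(x,y)=2^{-i}$ ($i$ the first disagreement index), so by Proposition~\ref{invariance by the metric} I may compute $h_{d_X}(\sigma)=h_d(\sigma)$ using $d$. Since $Dom(\sigma^{n-1})=X$, the first item of Proposition~\ref{propnovossepspan} gives $ssep(n,\varepsilon,\sigma,K)=sep(n,\varepsilon,\sigma,K)$, so I can work with ordinary separated sets.

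The combinatorial heart of the argument is to count $(n,\varepsilon,\sigma,X)$-separated sets via path prefixes. Using $I_n(x)=\{0,\dots,n-1\}$ and $(\sigma^i x)_j=x_{i+j}$, a short computation shows that for $\varepsilon$ with $2^{-m}\le\varepsilon<2^{-(m-1)}$ one has $d_n(x,y)\le\varepsilon$ if and only if $x$ and $y$ agree on their first $n+m-2$ coordinates; equivalently, $d_n(x,y)>\varepsilon$ exactly when $x$ and $y$ have distinct prefixes of length $n+m-2$. Therefore a set is $(n,\varepsilon)$-separated iff its points have pairwise distinct length-$(n+m-2)$ prefixes, and a maximal such set contains one point per realizable prefix. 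Here the no-sinks hypothesis is essential: every admissible finite path extends to an infinite path, so the realizable prefixes of length $\ell$ are precisely the elements of $\mathfrak{p}^\ell$. Hence $sep(n,\varepsilon,\sigma,X)=|\mathfrak{p}^{\,n+m-2}|$.

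It then remains to pass to the limits in Definition~\ref{defentropy}. Because $|\mathfrak{p}^{\,n+k}|=|\mathfrak{p}^{\,n+k_\varepsilon}|$ differs from $|\mathfrak{p}^{\,n}|$ only by a shift in the index, and since $a_n:=\log|\mathfrak{p}^{\,n}|$ is subadditive ($|\mathfrak{p}^{\,n+m}|\le|\mathfrak{p}^{\,n}|\,|\mathfrak{p}^{\,m}|$ by splitting a path into its length-$n$ prefix and length-$m$ suffix), Fekete's lemma guarantees that $\lim_{n}\frac1n\log|\mathfrak{p}^{\,n}|$ exists; call it $g$. Writing $\frac1n\log|\mathfrak{p}^{\,n+m-2}|=\frac{n+m-2}{n}\cdot\frac{1}{n+m-2}\log|\mathfrak{p}^{\,n+m-2}|$ shows $h_\varepsilon(\sigma,X,d)=g$ for every small $\varepsilon$, so letting $\varepsilon\to0$ gives $h_d(\sigma,X)=g$. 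Finally, any compact $K\subseteq X$ satisfies $sep(n,\varepsilon,\sigma,K)\le sep(n,\varepsilon,\sigma,X)$ (a separated subset of $K$ is a separated subset of $X$), whence $h_d(\sigma,K)\le h_d(\sigma,X)$; since $X$ itself is compact, the supremum defining $h_d(\sigma)$ is attained at $K=X$ and equals $g=\lim_{n\to\infty}\frac1n\log|\mathfrak{p}^{\,n}|$. The main obstacle I expect is the separated-set computation in the second paragraph: getting the exact prefix-length correspondence between $d_n$-balls and cylinder sets right, and invoking the no-sinks assumption to identify realizable prefixes with all of $\mathfrak{p}^\ell$; once that identity is in place, the remaining limit manipulations are routine.
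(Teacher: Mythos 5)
Your proof is correct, and its first half is exactly the paper's reduction: you observe that for a finite graph $X_{fin}=\emptyset$, $Dom(\sigma^n)=X$, $X$ is compact, and then invoke Proposition~\ref{dorsal} together with Proposition~\ref{invariance by the metric} to pass to the standard edge-shift metric. Where you diverge is in what happens next: the paper simply cites \cite[Theorem~7.13]{Walters} for the classical fact, whereas you re-prove the classical computation from scratch -- identifying, for $2^{-m}\leq\varepsilon<2^{-(m-1)}$, the $(n,\varepsilon)$-separated sets with sets of points having pairwise distinct length-$(n+m-2)$ prefixes (this index bookkeeping is right: $d_n(x,y)\leq\varepsilon$ forces agreement on coordinates $1$ through $(n-1)+(m-1)$), using the no-sinks hypothesis to identify realizable prefixes with $\mathfrak{p}^{n+m-2}$, and then invoking Fekete's lemma on the subadditive sequence $\log|\mathfrak{p}^n|$ to get existence of the limit. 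Your route is more self-contained and has a genuine advantage: it directly yields the path-counting formula $\lim_n\frac1n\log|\mathfrak{p}^n|$ as stated, including the existence of that limit, whereas the citation of Walters gives the entropy as $\log\lambda$ for the Perron eigenvalue of the edge adjacency matrix and implicitly relies on the standard Perron--Frobenius identification of $\lim_n\frac1n\log|\mathfrak{p}^n|$ with $\log\lambda$. The cost is length; the paper's two-line proof buys brevity at the price of outsourcing both the entropy computation and the spectral-to-combinatorial translation to the literature.
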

\begin{proof}

If $E$ is finite then the associated shift space is compact and $Dom(\sigma^n)=X$ for all $n$. So, the result follows from Proposition~\ref{invariance by the metric} and \cite[Theorem~7.13]{Walters}.

\end{proof}

In the classical literature of countable edge shift spaces, the Gurevic entropy of a locally finite graph $E$, $h_G(E)$, is computed as the supremum of the entropies of finite, connected subgraphs, that is, \[h_G(E)= \sup_{H\subseteq E, \ H \text{ finite}} h(X_H,\sigma),\]  see \cite[Prop.~7.2.6]{Kitchens} or \cite[Section~13.9]{LindMarcus}. This entropy can be obtained as the Bowen metric entropy, see \cite{gurevich, Pet2, Sal1}, associated with the following metric.

Given a row-finite graph $E$, enumerate the set of edges as $E^1=\{e_i:i\in \N\}$ and let $h:E^1\rightarrow \N$ be the bijection defined by $h(e_i)=i$.  Following \cite{gurevich}, the map $d:X\times X \rightarrow \R$ defined by \[d(x,y)=\sum\limits_{i=1}^\infty\frac{|\frac{1}{h(x_i)}-\frac{1}{h(y_i)}|}{2^i}\] is a metric. We show next that, for locally-finite graphs, the metric $d$ above is uniformly equivalent to the metric $d_X$.

\begin{proposition}\label{jacare}
Let $E$ be a row-finite graph and $d_X$ be the metric given in Equation~\ref{definmetricanova}. Then, $d_X$ is uniformly continuous with respect to the metric $d$ defined above. If $E$ is locally finite, then $d_X$ and $d$ are uniformly equivalent.
\end{proposition}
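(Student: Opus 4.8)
The statement asserts two things: that $Id_X\colon (X,d)\to (X,d_X)$ is uniformly continuous for every row-finite $E$, and that when $E$ is locally finite the inverse $Id_X\colon (X,d_X)\to (X,d)$ is uniformly continuous as well, so that $d$ and $d_X$ are uniformly equivalent. I would prove the two implications separately: the first in the spirit of Proposition~\ref{dorsal}, and the second by a combinatorial counting argument in which local finiteness is genuinely used.

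For the first implication, fix $\varepsilon>0$ and choose $k$ with $\frac{1}{2^k}<\varepsilon$. Enumerating $\mathfrak{p}=\{p_1,p_2,\ldots\}$, I would set $L=\max_{1\le i\le k}|p_i|$ and let $\mathcal{E}$ be the (finite) set of all edges that either occur in some $p_i$ with $i\le k$ or are emitted by some vertex lying in a length-zero $p_i$; put $H=\max_{e\in\mathcal{E}}h(e)$. The plan is to argue the contrapositive: if $d_X(x,y)\ge\varepsilon$ then some $p_{i_0}$ with $i_0\le k$ is an initial segment of exactly one of $x,y$, and I would show this forces a position $t\le\max(L,1)$ with $x_t\neq y_t$ and $x_t\in\mathcal{E}$ (when $p_{i_0}$ is an edge-path this is immediate; when $p_{i_0}$ is a length-zero segment one uses that $s(x)\neq s(y)$ together with row-finiteness to place $x_1$ in a finite edge set). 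Since then $\min(h(x_t),h(y_t))\le H$ with $h(x_t)\neq h(y_t)$, the elementary estimate $\big|\tfrac1{h(x_t)}-\tfrac1{h(y_t)}\big|\ge\tfrac{1}{H(H+1)}$ yields $d(x,y)\ge\tfrac{1}{2^{\max(L,1)}H(H+1)}=:\delta$, a bound depending only on $\varepsilon$. This establishes uniform continuity of $Id_X\colon(X,d)\to(X,d_X)$ using only row-finiteness.

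For the second implication I would assume $E$ locally finite and again argue by contrapositive, bounding $d_X$ from below when $d(x,y)$ is large. Given $\varepsilon>0$, choose $M$ with $\frac{1}{2^M}<\frac{\varepsilon}{2}$; since the tail satisfies $\sum_{i>M}2^{-i}\le 2^{-M}$, the hypothesis $d(x,y)\ge\varepsilon$ forces $\sum_{i\le M}\frac{|1/h(x_i)-1/h(y_i)|}{2^i}\ge\frac{\varepsilon}{2}$, and a pigeonhole step produces a position $t\le M$ with $\big|\tfrac1{h(x_t)}-\tfrac1{h(y_t)}\big|\ge\frac{\varepsilon}{M}$. Hence $x_t\neq y_t$ and $\min(h(x_t),h(y_t))\le\frac{M}{\varepsilon}=:H$, so the last edge of the prefix $\alpha:=x_1\cdots x_t$ has label $\le H$, and since the first disagreement occurs at some $j\le t$ the path $\alpha$ is an initial segment of $x$ but not of $y$. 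The crucial point is that the set $\mathcal{F}(M,H)$ of finite paths of length $\le M$ whose last edge has label $\le H$ is finite: there are at most $H$ admissible last edges, and tracing a path backward from the (finitely many) corresponding source vertices each step offers only finitely many choices because local finiteness gives $|r^{-1}(v)|<\infty$. Thus $\alpha$ lies in this finite pool of distinguishing initial segments, its enumeration index is at most $N:=\max_{\gamma\in\mathcal{F}(M,H)}\{\text{index of }\gamma\}$, and therefore $d_X(x,y)\ge 2^{-N}=:\delta$, giving uniform continuity of the inverse.

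The main obstacle, and the reason local finiteness rather than mere row-finiteness is assumed, is exactly this last counting step: one must show that the prefix witnessing a large $d$-discrepancy can be drawn from a \emph{finite} family, so that its position in the enumeration of $\mathfrak{p}$ is uniformly bounded. By Proposition~\ref{sono} the metric $d_X$ is independent of the enumeration up to uniform equivalence, so it is harmless that $\delta$ is defined through a fixed enumeration. Two minor points deserve care in the first implication: the length-zero (vertex) initial segments must be handled alongside the edge-paths, which is precisely where row-finiteness of the emitted edge sets enters, and the reciprocal-gap estimate $\big|\tfrac1a-\tfrac1b\big|\ge\tfrac{1}{H(H+1)}$ for distinct positive integers with $\min(a,b)\le H$ is the elementary ingredient linking the two metrics in both directions.
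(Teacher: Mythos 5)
Your proof is correct and follows essentially the same route as the paper's: both implications rest on the same two ingredients, the reciprocal-gap estimate $\bigl|\tfrac1a-\tfrac1b\bigr|\geq\tfrac{1}{H(H+1)}$ for distinct integers with $\min(a,b)\leq H$, and the finiteness (via row-, respectively local, finiteness) of the set of short paths built from small-label edges, with your $\mathcal{F}(M,H)$ playing exactly the role of the paper's finite set $A$ of paths of length at most $k$ containing one of $e_1,\dots,e_L$; you merely organize both implications contrapositively where the paper argues directly, which changes nothing of substance. A minor point in your favor: you explicitly handle length-zero (vertex-set) initial segments in the first implication, a case the paper's proof passes over by writing every $p_i$ as a string of edges.
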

\begin{proof}

First, we show $d_X$ is uniformly continuous with respect to $d$.
Let $p_1, p_2, \ldots $ be an enumeration of $\p$. 
Let $\varepsilon>0$ and consider $k\in \N$ such that $0<\frac{1}{2^k}<\varepsilon$. 
For each $p_i$, with $i=1,\ldots ,k$, we can write $p_i=e_i^1\ldots  e_i^{|p_i|}$ where $e_i^j\in E^1$. Let $$M=\max\{h(e_i^j):i=1,\ldots ,k; j=1,\ldots , |p_i|\},$$ define $L=\displaystyle\max_{1\leq i\leq k} |p_i|$, let $\delta'=\frac{1}{M+1}-\frac{1}{M+2}$ and $\delta=\dfrac{\delta'}{2^L}$. 

Now, fix $x=(x_i)_{i\in \N}, y=(y_i)_{i\in \N}\in X$, such that $d(x,y)<\delta$. Then
$$\sum_{i=1}^L \frac{|\frac{1}{h(x_i)}-\frac{1}{h(y_i)}|}{2^i}\leq \sum_{i=1}^\infty \frac{|\frac{1}{h(x_i)}-\frac{1}{h(y_i)}|}{2^i}=d(x,y)<\delta=\frac{\delta'}{2^L}.$$
So, for each $i\in \{1,\ldots ,L\}$, we have $$\left|\frac{1}{h(x_i)}-\frac{1}{h(y_i)}\right|\leq\sum_{i=1}^L \left|\frac{1}{h(x_i)}-\frac{1}{h(y_i)}\right|\leq\sum_{i=1}^L 2^{L-i}\left|\frac{1}{h(x_i)}-\frac{1}{h(y_i)}\right|< \delta'.$$  
Next we show that for a fixed $i\in \{1,\ldots ,L\}$ it holds that, or $x_i=y_i$, or $h(x_i)>M$ and $h(y_i)>M$.

Suppose first that $h(x_i)\leq M$ and $h(y_i)\leq M$. Supposing $h(x_i)\neq h(y_i)$, we get  $|\frac{1}{h(x_i)}-\frac{1}{h(y_i)}|\geq\frac{1}{M}-\frac{1}{M+1}$, which is impossible since  $|\frac{1}{h(x_i)}-\frac{1}{h(y_i)}|< \delta'=\frac{1}{M+1}-\frac{1}{M+2}$. Therefore, $x_i=y_i$ in this case. 

Suppose now that $h(x_i)\leq M$ and $h(y_i)\geq M+1$. In this case, $\frac{1}{h(x_i)}\geq \frac{1}{M}$ and $\frac{1}{h(y_i)} \leq \frac{1}{M+1}$, and so $|\frac{1}{h(x_i)}-\frac{1}{h(y_i)}|\geq \frac{1}{M}-\frac{1}{M+1}$, which is also impossible since $|\frac{1}{h(x_i)}-\frac{1}{h(y_i)}|<\delta'=\frac{1}{M+1}-\frac{1}{M+2}$. Of course, it is also impossible $h(y_i)\leq M$ and $h(x_i)\geq M+1$.

So, it follows that, for $i\in \{1,\ldots ,L\}$, or $y_i=x_i$, or $h(x_i) >M$ and $h(y_i)>M$.

Let $j\in \N$ be the smallest value such that $p_j$ is an initial segment of one of $x$ or $y$ but not the other. Then $d_X(x,y)=\frac{1}{2^j}$. Write $p_j=f_1f_2\ldots f_{|p_j|}$ where $f_i\in E^1$ for each $i$. If $|p_j|\leq L$ then $x_i\neq y_i$ for some $i\in \{1,\ldots ,L\}$ and therefore $h(x_i)> M$ and $h(y_i)>M$. In particular, $h(f_i)>M$. Therefore, $f_i$ is not an edge of any  element of $\{p_1,\ldots ,p_k\}$, and so $p_j\notin \{p_1,\ldots ,p_k\}$. Therefore $j>k$. If $|p_j|>L$ then obviously $p_j\notin\{p_1,\ldots ,p_k\}$ and so also $j>k$. Then it follows that $d_X(x,y)=\frac{1}{2^j}< \frac{1}{2^k}<\varepsilon$.

Now, we assume the graph is locally finite and we show that $Id_X:(X,d_X)\to (X,d)$ is uniformly continuous. Let $\varepsilon>0$. 
Fix $L\in \N$ such that for all $i,j>L$ it holds that $\left|\frac{1}{i}-\frac{1}{j}\right|<\frac{\varepsilon}{2}$,
 and let $k\in \N$ such that $\sum\limits_{i=k}^\infty \frac{1}{2^i}<\frac{\varepsilon}{2}$.

Let $$A:=\{p_i \in \p; |p_i|\leq k \text{ and }  p_i \text{ contains at least one of the edges }\{e_1,e_2,\ldots ,e_L\} \text{ in its composition}\}.$$ Notice that since the graph is locally finite then $A$ is a finite set. Now let $M=\max \{i: p_i\in A\}$ and take $\delta=\frac{1}{2^M}$. 

Fix two elements $x=(x_i)_{i\in \N}, y=(y_i)_{i\in \N}\in X$ with $d_X(x,y)<\delta$. We show that $d(x,y)<\varepsilon$. Since $d_X(x,y)<\delta=\frac{1}{2^k}$ then $d_X(x,y)=\frac{1}{2^j}$, where $j$ is the smallest index such that $p_j$ is the initial path of one of $x$ or $y$ but not of the other. 

Suppose that $x_i\in \{e_1,\ldots e_L\}$ for some $i\in \{1,\ldots ,k\}$. In this case, $x_1x_2\ldots x_k\in A$ and supposing $x_1x_2\ldots x_k\neq y_1y_2\ldots y_k$, then $d_X(x,y)\geq \frac{1}{2^M}=\delta$, which is impossible. Therefore, $x_1x_2\ldots x_k=y_1y_2..y_k$ in this case. The same holds if $y_i\in \{e_1,\ldots e_L\}$ for some $i\in \{1,\ldots ,k\}$. Notice that since $x_1\ldots x_k=y_1\ldots y_k$, then $$d(x,y)=\sum\limits_{i=k+1}^\infty\frac{|\frac{1}{h(x_i)}-\frac{1}{h(y_i)}|}{2^i}<\sum\limits_{i=k+1}^\infty\frac{1}{2^i}<\frac{\varepsilon}{2}<\varepsilon.$$

Case $x_i,y_i\notin \{e_1,\ldots ,e_L\}$ for each $i\in \{1,\ldots ,k\}$ then $h(x_i)>L$ and $h(y_i)>L$ for each $i\in \{1,\ldots ,k\}$, and then, since $|\frac{1}{h(x_i)}-\frac{1}{h(i_i)}|<\frac{\varepsilon}{2}$  it holds that

$$d(x,y)=\sum\limits_{i=1}^\infty \frac{|\frac{1}{h(x_i)}-\frac{1}{h(y_i)}|}{2^i}=\sum\limits_{i=1}^k \frac{|\frac{1}{h(x_i)}-\frac{1}{h(y_i)}|}{2^i}+\sum\limits_{i=k+1}^\infty \frac{|\frac{1}{h(x_i)}-\frac{1}{h(y_i)}|}{2^i}<$$
$$<\sum\limits_{i=1}^k \frac{\frac{\varepsilon}{2}}{2^i}+\sum\limits_{i=k+1}^\infty \frac{1}{2^i}<\frac{\varepsilon}{2}+\frac{\varepsilon}{2}=\varepsilon.$$

\end{proof}

\begin{corollary}
Let $E$ be a locally finite graph and $(X,\sigma)$ the associated Deaconu-Renault system. Then, $h_d(\sigma)=h_G(E)$. 
\end{corollary}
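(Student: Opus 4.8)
The plan is to reduce the generalized metric entropy of Definition~\ref{defentropy} to the classical Bowen metric entropy and then invoke the known identification of the latter with the Gurevich entropy. The first observation I would record is that a locally finite graph is in particular row-finite with no sinks, so (as noted at the start of this subsection) $X = \mathfrak{p}^\infty$ is the usual edge shift space of infinite paths and $\sigma$ is the ordinary one-sided shift, which is defined on all of $X$. Consequently $Dom(\sigma^n) = X$ for every $n \in \N$, and there is no genuine inner approximation of the domain to perform.

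With this in hand, I would first move the computation to the Gurevich metric. Since $E$ is locally finite, Proposition~\ref{jacare} gives that the metric $d_X$ of Equation~(\ref{definmetricanova}) and the Gurevich metric $d$ are uniformly equivalent. By Proposition~\ref{invariance by the metric}, the metric entropy is preserved under uniformly equivalent metrics, so $h_{d_X}(\sigma) = h_d(\sigma)$; thus it suffices to compute the metric entropy with respect to the Gurevich metric $d$.

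Next I would show that, in this setting, our definition literally reduces to Bowen's. Because $Dom(\sigma^{n-1}) = X$ for every $n$, the first item of Proposition~\ref{propnovossepspan} yields, for every compact $K \subseteq X$,
$$ssep(n,\varepsilon,\sigma,K) = sep(n,\varepsilon,\sigma, K\cap Dom(\sigma^{n-1})) = sep(n,\varepsilon,\sigma,K).$$
Hence every quantity entering Definition~\ref{defentropy} coincides with the classical separated-set count, and $h_d(\sigma)$ is exactly Bowen's metric entropy of the shift taken with respect to the Gurevich metric $d$. By the cited results \cite{gurevich, Pet2, Sal1}, this Bowen metric entropy equals the Gurevich entropy $h_G(E)$. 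Combining the three steps gives $h_d(\sigma) = h_{d_X}(\sigma) = h_G(E)$, as claimed.

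The only delicate point — and the step I expect to be the main obstacle — is the collapse of the generalized entropy to the classical one: one must be sure that the supremum over closed sets $F \subseteq Dom(\sigma^{n-1})$ built into $ssep$ contributes nothing new, which is precisely what the full-domain hypothesis $Dom(\sigma^{n-1}) = X$ together with Proposition~\ref{propnovossepspan} guarantees. Everything else is a direct appeal to the uniform equivalence of metrics (Proposition~\ref{jacare}) and to the invariance of the metric entropy under such equivalences (Proposition~\ref{invariance by the metric}).
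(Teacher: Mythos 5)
Your proof is correct and takes essentially the same route as the paper: the paper's own proof simply combines Proposition~\ref{jacare} with Proposition~\ref{invariance by the metric}, relying (as you do) on the cited identification of $h_G(E)$ with Bowen's metric entropy for the Gurevich metric $d$. Your extra step invoking Proposition~\ref{propnovossepspan} to show that $ssep(n,\varepsilon,\sigma,K)=sep(n,\varepsilon,\sigma,K)$ when $Dom(\sigma^{n-1})=X$, so that the generalized entropy collapses to the classical Bowen entropy, merely makes explicit a point the paper leaves implicit.
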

\begin{proof}

This follows from the proposition above and Proposition~\ref{invariance by the metric}.

\end{proof}

We show below that the topological entropy of the Deaconu-Renault system associated with any row-finite graph can be computed as the supremum over the entropies of the Deaconu-Renault systems associated with finite, connected subgraphs. 

\begin{theorem}\label{sup dos subgrafos}
Let $E$ be a row-finite graph and $(X,\sigma)$ the associated Deaconu-Renault system. Then, 
\[h(\sigma)= \sup_{H\subseteq E} h(\sigma_H),\] where the supremum is taken over all finite subgraphs $H$ of $E$, and $(X_H,\sigma_H)$ is the Deaconu-Renault system associated with the graph $H$.
\end{theorem}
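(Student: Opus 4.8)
The plan is to prove the two inequalities $h(\sigma)\geq \sup_H h(\sigma_H)$ and $h(\sigma)\leq \sup_H h(\sigma_H)$ separately, where throughout I use that for a row-finite graph $X=\mathfrak{p}^\infty$ consists only of infinite paths, so $\sigma\colon X\to X$ is everywhere defined and $Dom(\sigma^n)=X$ for all $n$.

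For the lower bound, fix a finite subgraph $H$ of $E$ and let $X_H\subseteq X$ be the set of infinite paths whose edges all lie in $E^1_H$. This is closed in $X$ and, being the path space of a subshift over the finite alphabet $E^1_H$, it is compact; moreover $\sigma(X_H)\subseteq X_H$, so $(X_H)_n=X_H$ for every $n$. Restricting an open cover $\alpha$ of $X$ to $X_H$ and using $\sigma^{-i}(A)\cap X_H=\sigma_H^{-i}(A\cap X_H)$, the counting quantities $N(\alpha_n,(X_H)_n)$ computed in the ambient system coincide with those of the intrinsic Deaconu--Renault system $(X_H,\sigma_H)$; hence $h(\sigma,X_H)=h(\sigma_H)$. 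Since $X_H$ is a compact subset of $X$, Definition~\ref{defofentropyviacovers} (together with the monotonicity of \Cref{entropia crescente}) gives $h(\sigma)\geq h(\sigma,X_H)=h(\sigma_H)$, and taking the supremum over all finite $H$ yields the inequality.

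For the upper bound, fix a compact set $K\subseteq X$. By \Cref{diametro indo pra zero} applied with the metric $d_X$, one has $h(\sigma,K)=\lim_{m\to\infty}h(\beta^{(m)},\sigma,K)$, where $\beta^{(m)}=\{[\gamma]:|\gamma|=m,\ [\gamma]\cap K\neq\emptyset\}$ is the cover of $K$ by length-$m$ cylinders; that $\operatorname{diam}_{d_X}(\beta^{(m)})\to 0$ follows from \Cref{jacare} (uniform continuity of $Id\colon(X,d)\to(X,d_X)$) together with the fact that two paths sharing a length-$m$ initial segment are $d$-close. A direct computation of the join shows that the nonempty members of $\beta^{(m)}_n$ are exactly the length-$(m+n)$ cylinders all of whose length-$m$ windows meet $K$; as equal-length cylinders are pairwise disjoint, it follows that $N(\beta^{(m)}_n,K_n)=|W_{m+n}(K_n)|$, the number of distinct length-$(m+n)$ initial segments of points of $K_n$, where $K_n=K\cap\sigma^{-1}(K)\cap\cdots\cap\sigma^{-n}(K)$.

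The crux is then a finite-subgraph localization. The first-edge map $\pi_1\colon X\to E^1$ is continuous into the discrete set $E^1$, so $\pi_1(K)$ is finite; let $H$ be the finite subgraph of $E$ with edge set $\pi_1(K)$. If $x\in K_n$ then $\sigma^j(x)\in K$ for $0\leq j\leq n$, and the first edge of $\sigma^j(x)$ is $x_{j+1}$, so $x_1,\dots,x_{n+1}\in\pi_1(K)$; hence every length-$(n+1)$ initial segment of a point of $K_n$ is a path in $H$, giving $|W_{n+1}(K_n)|\leq |\mathfrak{p}^{n+1}_H|$. The remaining edges of a length-$(m+n)$ prefix are controlled because the window $x_{n+1}\cdots x_{n+m}$ is the length-$m$ prefix of $\sigma^n(x)\in K$, hence lies in the finite set $W_m(K)=\pi_m(K)$; thus $x_1\cdots x_{m+n}\mapsto(x_1\cdots x_{n+1},\,x_{n+1}\cdots x_{m+n})$ injects $W_{m+n}(K_n)$ into $\mathfrak{p}^{n+1}_H\times W_m(K)$, so $N(\beta^{(m)}_n,K_n)\leq |\mathfrak{p}^{n+1}_H|\cdot|W_m(K)|$. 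Taking $\tfrac1n\log$ and letting $n\to\infty$ kills the constant $|W_m(K)|$ and gives $h(\beta^{(m)},\sigma,K)\leq \lim_n \tfrac1n\log|\mathfrak{p}^{n+1}_H|=h(\sigma_H)$ by \Cref{entropy of finite graphs}; since this bound is uniform in $m$, we get $h(\sigma,K)\leq h(\sigma_H)\leq \sup_{H'}h(\sigma_{H'})$, and the supremum over compact $K$ concludes.

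I expect the main obstacle to be this localization step, and it is exactly where row-finiteness is essential: membership of $x$ in $K_n$ forces all of its first $n+1$ edges into the single finite set $\pi_1(K)$, which is what lets one fixed finite subgraph dominate the count (contrast the tree example, where shifting escapes $K$ and forces $K_n=\emptyset$). The points I would verify carefully are the exact identification $N(\beta^{(m)}_n,K_n)=|W_{m+n}(K_n)|$ via the structure of the join, the interchange of the iterated limits $\lim_m\lim_n$, and that $h(\sigma_H)=\lim_n\tfrac1n\log|\mathfrak{p}^n_H|$ continues to hold for a finite subgraph that may have sinks (if needed, replacing $H$ by its sink-free reduction, which has the same exponential growth rate).
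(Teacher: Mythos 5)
Your proof is correct, and although it follows the same broad outline as the paper's (prove the two inequalities separately, feed cylinder covers of shrinking diameter into \Cref{diametro indo pra zero}, and reduce to path counting in finite subgraphs), both halves are carried out by genuinely different devices. For $h(\sigma)\geq \sup_H h(\sigma_H)$ the paper never works with $X_H$ as a compact subset of $X$: for each $m$ it forms the ambient compact $K=\bigcup_{\beta} C_\beta$ over all length-$m$ paths $\beta$ in $H$ and proves an exact combinatorial correspondence (its Claims 1 and 2) between minimal subcovers of $\gamma_n$ for $K_n$ and of $\alpha_n$ for $(X_H)_n$, concluding $h(\sigma,K)=h(\sigma_H,X_H)$; you instead use that $X_H$ is compact and $\sigma$-invariant, so that $(X_H)_n=X_H$ and ambient cover counts coincide with intrinsic ones under restriction, which yields $h(\sigma,X_H)=h(\sigma_H)$ with no path combinatorics and is really a general fact about compact invariant subsets. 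For $h(\sigma)\leq\sup_H h(\sigma_H)$ the paper encloses a given compact $L$ in one of its special compacts $K$ (attached to the subgraph generated by all edges of the length-$m$ prefixes of points of $L$) and invokes the exact identity together with \Cref{entropia crescente}; you instead estimate $N(\beta^{(m)}_n,K_n)$ directly, localizing through the finite first-edge set $\pi_1(K)$ (which isolates exactly where row-finiteness is used) and the splitting injection $W_{m+n}(K_n)\hookrightarrow \mathfrak{p}^{n+1}_H\times W_m(K)$; this needs only an inequality up to the constant $|W_m(K)|$ rather than an exact cover correspondence, and since your $H$ depends on $K$ alone and not on $m$, the interchange of limits you worried about never actually arises. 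Two debts your route incurs, both of which you correctly flag: first, identifying $h(\sigma_H)$ with $\lim_n \frac{1}{n}\log|\mathfrak{p}^n_H|$ uses \Cref{entropy of finite graphs}, which is a statement about metric entropy, so you also need the classical coincidence of cover entropy and metric entropy for a compact system (this is standard and non-circular; just avoid quoting \Cref{andorinha}, which is deduced from the theorem being proved); second, the sink issue is real, since your $H$ can have sinks, and passing to the sink-free reduction (which changes neither $X_H$ nor the exponential growth rate of $|\mathfrak{p}^n_H|$) is the correct repair --- in fact the paper's Claims 1 and 2 tacitly assume the subgraph has no sinks (their stated equality of covering numbers can fail otherwise, though only by a discrepancy that does not affect the limit), so on this point your treatment is the more careful one.
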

\begin{proof}

Let $H$ be a finite subgraph of $E$. Define $X_H$ as the shift space associated with $H$, that is, $X_H$ consists of all infinite paths in $H$ with the topology given by the usual cylinder sets. This space is compact. Let $m\in \N$ and let $F$ be the set of all the paths in $H$ with length $m$, which is a finite set. For each $\beta\in F$, denote by $D_{\beta}$ the cylinder set in $X_H$ consisting of all paths in $H$ with initial segment $\beta$, and notice that $\alpha=\{D_\beta:\beta\in F\}$ is an open cover of $X_H$ (in $X_H$).

For each $\beta\in F$ let $C_\beta$ be the cylinder set in $X$ determined by $\beta$. Define $K=\bigcup\limits_{\beta\in F}C_\beta$, which is a compact set in $X$, and notice that $\gamma=\{C_\beta:\beta\in F\}$ is an open cover of $K$ in $X$.

Now we divide the proof into a few claims. The first one below follows from direct calculations and its proof is left to the reader. 

{\it Claim 1: For a given set $\{\beta_0,\beta_1,\ldots ,\beta_n\}\subseteq F$ we have that either  
$$D_{\beta_0}\cap \sigma_{H}^{-1}(D_{\beta_1})\cap\ldots \cap \sigma_H^{-n}(D_{\beta_n})=\emptyset=C_{\beta_0}\cap \sigma^{-1}(C_{\beta_1})\cap\ldots \cap \sigma^{-n}(C_{\beta_n}),$$
 or there exists a path $\xi$ in $H$, with $|\xi|=m+n$, such that
$$D_{\beta_0}\cap \sigma_{H}^{-1}(D_{\beta_1})\cap\ldots \cap \sigma_H^{-n}(D_{\beta_n})=D_{\xi} \text{ and } C_{\beta_0}\cap \sigma^{-1}(C_{\beta_1})\cap\ldots \cap \sigma^{-n}(C_{\beta_n})=C_{\xi}.$$ }

{\it Claim 2: Fix $n\in \N$. Then, $N(\gamma_n,K_n)= N(\alpha_n, (X_H)_n)$.}

First we show that $N(\gamma_n,K_n)\leq N(\alpha_n, (X_H)_n)$. Notice that each element of $A\in\alpha_n$ is of the form $$A=D_{\beta_0}\cap \sigma_H^{-1}(D_{\beta_1})\cap\ldots \cap \sigma_H^{-n}(D_{\beta_n})$$ and, moreover, $(X_H)_n$ is a union of such sets.

Let $\{A_1,..,A_p\}$ be a subcover of $\alpha_n$ of minimal cardinality covering $(X_H)_n$, which implies that $N(\alpha_n,(X_H)_n)=p$.
Since $A_i \in \alpha_n$ and $A_i$ is non-empty we have, by Claim~1, that there exists a path $\xi_i\in H$ with $|\xi_i|=m+n$ and $A_i=D_{\xi_i}$, for each $1\leq i\leq p$. This means that $(X_H)_n\subseteq D_{\xi_1}\cup\ldots \cup D_{\xi_p}$. 

We show that $K_n\subseteq C_{\xi_1}\cup\ldots \cup C_{\xi_p}$. Notice that $K_n$ is a union of the elements of $\gamma_n$, and each non-empty element of $\gamma_n$ is of the form 
$$B=C_{a_0}\cap \sigma^{-1}(C_{a_1})\cap \ldots  \cap \sigma^{-n}(C_{a_n}).$$
Since $D_{a_0}\cap \sigma_H^{-1}(D_{a_1})\cap \ldots  \cap \sigma_H^{-n}(D_{a_n})$ is an element of $\alpha_n$ (and hence a subset of $(X_H)_n\subseteq D_{\xi_1}\cup\ldots \cup D_{\xi_n}$) and $D_{a_0}\cap \sigma_H^{-1}(D_{a_1})\cap \ldots  \cap \sigma_H^{-n}(D_{a_n})=D_{\xi}$ for some path $\xi$ in $H$ with $|\xi|=m+n$ (by Claim~1), we have that $D_{a_0}\cap \sigma_H^{-1}(D_{a_1})\cap \ldots  \cap \sigma_H^{-n}(D_{a_n})=D_{\xi_j}$ for some $1\leq j\leq p$. Again, by Claim~1, we have that $C_{a_0}\cap \sigma^{-1}(C_{a_1})\cap \ldots  \cap \sigma^{-n}(C_{a_n})=C_{\xi_j}$. Therefore, $B$ is a subset of $ C_{\xi_1}\cup\ldots \cup C_{\xi_p}$ and hence $K_n\subseteq C_{\xi_1}\cup\ldots \cup C_{\xi_p}$. 

It remains to show that each $C_{\xi_i}$, with $1\leq i\leq p$, is an element of $\gamma_n$. Recall that $D_{\xi_i}=A_i\in \alpha_n$. Hence, for each $1\leq i\leq p$, there exists $\{b_0,b_1,\ldots ,b_n\}\subseteq F$ such that $D_{b_0}\cap \sigma_H^{-1}(D_{b_1})\cap \ldots  \cap \sigma_H^{-n}(D_{b_n})=D_{\xi_i}$. By Claim 1, 
$C_{b_0}\cap \sigma^{-1}(C_{b_1})\cap \ldots  \cap \sigma^{-n}(C_{b_n})=C_{\xi_i}$ and therefore $C_{\xi_i}\in \gamma_n$ as desired.

We conclude that $\gamma_n$ has a sub-cover with $p$ elements, which covers $K_n$. Hence, $N(\gamma_n,K_n)\leq p=N(\sigma_n,(X_H)_n)$.

Similarly, one shows that $N(\sigma_n,(X_H)_n)\leq N(\gamma_n, K_n)$ and this finishes the proof of  Claim~2.

\vspace{0.5pc}
{\it Claim 3: Let $H$ and $K$ as above. Then, $h(\sigma_H, X_H)=h(\sigma, K)$}

Let $\alpha$ and $\gamma$ be as above. By Claim 2, $N(\alpha_n, (X_H)_n)=N(\gamma_n, K_n)$ for each $n\in \N$. Hence, $h(\gamma, \sigma_H, X_H)=h(\gamma, \sigma, K)$. Recall that $\alpha$ and $\gamma$ depend on the fixed $m$ at the beginning of the proof of this theorem. It is not hard to see that, with the metric fixed in Remark~\ref{enumeracao tabajara}, we have that $diam(\alpha)\rightarrow 0$ and $diam(\gamma)\rightarrow 0$ as $m\rightarrow \infty$. Then, by Proposition~\ref{diametro indo pra zero}, we get that $h(\sigma, K)=h(\sigma_H, X_H)$ and Claim 3 is proved.

\vspace{0.5pc}

Now we prove the theorem. 

Since $h(\sigma, K)\leq h(\sigma)$ for each compact set $K\subseteq X$ we have, by Claim 3, that $h(\sigma_H)=h(\sigma_H, X_H)\leq h(\sigma)$, for each finite subgraph $H$ of $E$. Therefore, 
$$\sup\limits_Hh(\sigma_H)\leq h(\sigma),$$ where the supremum is taken over all the finite subgraphs $H$ of $E$.

It remains to show that $h(\sigma)\leq \sup\limits_Hh(\sigma_H)$. For this, we show that for each compact set $L\subseteq X$, there exists some finite subgraph $H$ of $E$ such that $h(\sigma, L)\leq h(\sigma_H)$.

Let $L$ be a compact subset of $X$ and $m\in \N$. Since $L$ is compact, we have that $L$ is contained in a finite union of cylinder sets $C_a$ in $X$, with $a\in F$. Since $E$ has no sinks and is row-finite we can assume that $|a|=m$ for each $a$. Let $H$ be the finite subgraph of $E$ generated by all the edges of all the paths $a$ in $F$. For this subgraph $H$, let $X_H$, $K$, $\alpha$, and $\gamma$ be as at the beginning of the proof of this theorem, and notice that $L\subseteq K$. Then, since $L\subseteq K$, we have by Lemma~\ref{entropia crescente} that   $h(\sigma, L)\leq h(\sigma, K)$ and, by Claim 3, we have that $h(\sigma, K)=h(\sigma_H)$. 
Therefore, $h(\sigma, L)\leq h(\sigma_H)$. Taking supremum over $L$ and $H$, we get that $h(\sigma)\leq \sup\limits_Hh(\sigma_H)$ as desired. 
\end{proof}

\begin{corollary}\label{andorinha}
Let $E$ be a locally finite graph. Then the metric entropy of $E$ and the topological entropy of $E$ coincide.
\end{corollary}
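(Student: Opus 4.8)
The plan is to chain together three facts established earlier, reducing everything to an equality of entropies for finite graphs. First I would invoke the corollary preceding Theorem~\ref{sup dos subgrafos}, which gives $h_d(\sigma)=h_G(E)$ for a locally finite graph $E$, together with the defining expression for the Gurevich entropy, $h_G(E)=\sup_H h(X_H,\sigma)$, where the supremum runs over the finite subgraphs $H$ of $E$. On the other side, Theorem~\ref{sup dos subgrafos} (applicable since a locally finite graph is in particular row-finite) gives $h(\sigma)=\sup_H h(\sigma_H)$, again over finite subgraphs $H$. Thus the corollary will follow once I show that $h(\sigma_H)=h(X_H,\sigma)$ for every finite subgraph $H$.

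The key step is to verify this per-graph identity. Since $E$ is locally finite, each finite subgraph $H$ is finite, so $X_H$ is compact and $Dom(\sigma_H)=X_H$. Under these hypotheses, the remark stating that when $Dom(\sigma)=X$ is compact the entropy via covers coincides with the usual topological entropy identifies the cover entropy $h(\sigma_H)$ of Definition~\ref{defofentropyviacovers} with the usual topological entropy of the (full) shift $(X_H,\sigma_H)$, which by the classical theory of subshifts equals $\lim_{n\to\infty}\frac{1}{n}\log|\mathfrak{p}_H^n|$ (cf.\ Proposition~\ref{entropy of finite graphs} and \cite[Theorem~7.13]{Walters}). As this limit is exactly the classical subshift entropy $h(X_H,\sigma)$ that enters the Gurevich formula, we obtain $h(\sigma_H)=h(X_H,\sigma)$.

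Putting the pieces together yields
\[
h(\sigma)=\sup_H h(\sigma_H)=\sup_H h(X_H,\sigma)=h_G(E)=h_d(\sigma),
\]
which is the assertion; note that this is consistent with, and refines, the inequality $h(\sigma)\le h_d(\sigma)$ of Theorem~\ref{zebra}. The only delicate point I anticipate is the bookkeeping that the supremum over all finite subgraphs in Theorem~\ref{sup dos subgrafos} matches the supremum over finite (connected) subgraphs in the Gurevich definition; this is harmless, since the entropy of a finite graph equals the maximum of the entropies of its connected components, so the two suprema coincide.
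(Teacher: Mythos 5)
Your proof is correct and takes essentially the same route as the paper's: both arguments chain together $h_d(\sigma)=h_G(E)$ (the corollary following Proposition~\ref{jacare}), the Gurevich supremum formula over finite subgraphs, the coincidence of metric and topological entropy for finite graphs, and Theorem~\ref{sup dos subgrafos}. Your explicit handling of the connected-versus-arbitrary finite subgraph bookkeeping is a detail the paper leaves implicit, but it does not change the argument.
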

\begin{proof}
By Proposition~\ref{jacare} the metric entropy coincides with Gurevich's entropy, which in turn can be computed as the supremum of entropies of finite graphs. For finite graphs, it is well known that the metric and topological entropies agree. The result now follows from the theorem above.
\end{proof}

Next, we compute entropy for a few examples. 

\begin{example}\label{ex2}
Let $E=(E^0, E^1,r,s)$ be the locally finite graph given in the picture below:

\begin{center}
\setlength{\unitlength}{2mm}
	\begin{picture}(70,10)
		\put(1,3){\scriptsize$v_0$}
		\put(2,5){\circle*{0.7}}
		\put(11,3){\scriptsize$v_{1}$}
		\put(12,5){\circle*{0.7}}
		\put(21,3){\scriptsize$v_{2}$}
		\put(22,5){\circle*{0.7}}
		\put(31,3){\scriptsize$v_{3}$}
		\put(32,5){\circle*{0.7}}
		\put(41,3){\scriptsize$v_{4}$}
		\put(42,5){\circle*{0.7}}

	    \qbezier(2,5)(7,8)(12,5)
		\put(12,5){\vector(2,-1){0}}
		\qbezier(12,5)(17,8)(22,5)
		\put(22,5){\vector(2,-1){0}}	
		\qbezier(22,5)(27,8)(32,5)
        \put(32,5){\vector(2,-1){0}}
    	\qbezier(32,5)(37,8)(42,5)
    	\put(42,5){\vector(2,-1){0}}
	    
		\qbezier(2,5)(7,2)(12,5)
		\put(2,5){\vector(-2,1){0}}
		\qbezier(12,5)(17,2)(22,5)
		\put(12,5){\vector(-2,1){0}}	
		\qbezier(22,5)(27,2)(32,5)
		\put(22,5){\vector(-2,1){0}}
        \qbezier(32,5)(37,2)(42,5)
    	\put(32,5){\vector(-2,1){0}}
          
        \put(46,4.6){\scriptsize$\ldots $}
	    \put(45,7){\scriptsize$\ldots $}
        \put(45,2.2){\scriptsize$\ldots $}

        \put(6,7){\scriptsize$e_1$}
		\put(16,7){\scriptsize$e_2$}
		\put(26,7){\scriptsize$e_3$}
		\put(36,7){\scriptsize$e_4$}
		
		\put(6,2.2){\scriptsize$f_1$}
		\put(16,2.2){\scriptsize$f_2$}
		\put(26,2.2){\scriptsize$f_3$}
		\put(36,2.2){\scriptsize$f_4$}
		
		\put(51.5,3){\scriptsize$v_{k}$}
		\put(52,5){\circle*{0.7}}
		\put(61.5,3){\scriptsize$v_{k+1}$}
		\put(62,5){\circle*{0.7}}
		
		\put(56,7){\scriptsize$e_{k+1}$}
		\qbezier(52,5)(57,8)(62,5)
		\put(62,5){\vector(2,-1){0}}
		
		\put(56,2){\scriptsize$f_{k+1}$}
		\qbezier(52,5)(57,2)(62,5)
		\put(52,5){\vector(-2,1){0}}
		
		\put(66,4.6){\scriptsize$\ldots $}
	    \put(65,7){\scriptsize$\ldots $}
        \put(65,2.2){\scriptsize$\ldots $}
\end{picture}
\end{center}

Let $(X,\sigma)$ be the associated Deaconu-Renault system. For each finite subgraph $H$ of $E$ denote by $(X_H,\sigma_H)$ the Deaconu-Renault system associated with the graph $H$, where $X_H$ is the set of all the infinite paths in $H$. 
Notice that each finite subgraph $G$ of $E$ is contained in a graph of the form $H_m=(E_m^0, E_m^1, r, s)$, where $E_m^0=\{v_0, v_1,\ldots ,v_m\}$, and $E_m^1=\{e_1, f_1, e_2, f_2,\ldots ,e_m, f_m\}$. So, by Theorem~\ref{sup dos subgrafos}, the entropy of $(X,\sigma)$ is the supremum of the entropies $h(\sigma_{H_m})$ of $(X_{H_m}, \sigma_{H_m})$. We compute $h(\sigma_{H_m})$ for each $m\in \N$. Fix $m\in \N$, $n\in \N$, and  denote by $\mathfrak{p}^n$ the set of all paths of length $n$ in $H_m$. Notice that $|\mathfrak{p}^n|\leq 2m 2^n$, since $H_m$ contains $2m$ edges and each vertex of $H_m$ emits 2 edges. Moreover, let $a=f_1e_1$ and $b=f_me_m$. Then, the range of each element in the set $\{a, e_2, f_2, e_3, f_3,\ldots , e_{m-1}, f_{m-1}, b\}$ emits two edges and hence the cardinality of the subset of all the elements of $\mathfrak{p}^n$ generated by the set $\{a, e_2, f_2, e_3, f_3,\ldots , e_{m-1}, f_{m-1}, b\}$ is $(2m-2)2^{n-2}$. Therefore, $(2m-2)2^{n-2}\leq |\mathfrak{p}^n|\leq 2m2^n$ for each $n\in \N$. Hence, 
$\lim\limits_{n\rightarrow \infty}\frac{1}{n}log|\mathfrak{p^n}|=log(2)$ for each $m\in \N$ and from Proposition~\ref{entropy of finite graphs}, we obtain that $h(\sigma_{H_m})=log(2)$ for each $m\in \N$. It follows, by Theorem~\ref{sup dos subgrafos}, that $h(\sigma)=log(2)$.
\end{example}

\begin{example}
Let $E=(E^0,E^1,r,s)$ be the disconnected graph consisting of the above graph and a rose of three petals, see the picture below. 

\begin{center}
\setlength{\unitlength}{2mm}
	\begin{picture}(70,10)
		\put(1,3){\scriptsize$v_0$}
		\put(2,5){\circle*{0.7}}
		\put(11,3){\scriptsize$v_{1}$}
		\put(12,5){\circle*{0.7}}
		\put(21,3){\scriptsize$v_{2}$}
		\put(22,5){\circle*{0.7}}
		\put(31,3){\scriptsize$v_{3}$}
		\put(32,5){\circle*{0.7}}
		\put(41,3){\scriptsize$v_{4}$}
		\put(42,5){\circle*{0.7}}

	    \qbezier(2,5)(7,8)(12,5)
		\put(12,5){\vector(2,-1){0}}
		\qbezier(12,5)(17,8)(22,5)
		\put(22,5){\vector(2,-1){0}}	
		\qbezier(22,5)(27,8)(32,5)
        \put(32,5){\vector(2,-1){0}}
    	\qbezier(32,5)(37,8)(42,5)
    	\put(42,5){\vector(2,-1){0}}
	    
		\qbezier(2,5)(7,2)(12,5)
		\put(2,5){\vector(-2,1){0}}
		\qbezier(12,5)(17,2)(22,5)
		\put(12,5){\vector(-2,1){0}}	
		\qbezier(22,5)(27,2)(32,5)
		\put(22,5){\vector(-2,1){0}}
        \qbezier(32,5)(37,2)(42,5)
    	\put(32,5){\vector(-2,1){0}}
          
        \put(46,4.6){\scriptsize$\ldots $}
	    \put(45,7){\scriptsize$\ldots $}
        \put(45,2.2){\scriptsize$\ldots $}

        \put(6,7){\scriptsize$e_1$}
		\put(16,7){\scriptsize$e_2$}
		\put(26,7){\scriptsize$e_3$}
		\put(36,7){\scriptsize$e_4$}
		
		\put(6,2.2){\scriptsize$f_1$}
		\put(16,2.2){\scriptsize$f_2$}
		\put(26,2.2){\scriptsize$f_3$}
		\put(36,2.2){\scriptsize$f_4$}
		
		\put(51.5,3){\scriptsize$v_{k}$}
		\put(52,5){\circle*{0.7}}
		\put(61.5,3){\scriptsize$v_{k+1}$}
		\put(62,5){\circle*{0.7}}
		
		\put(56,7){\scriptsize$e_{k+1}$}
		\qbezier(52,5)(57,8)(62,5)
		\put(62,5){\vector(2,-1){0}}
		
		\put(56,2){\scriptsize$f_{k+1}$}
		\qbezier(52,5)(57,2)(62,5)
		\put(52,5){\vector(-2,1){0}}
		
		\put(66,4.6){\scriptsize$\ldots $}
	    \put(65,7){\scriptsize$\ldots $}
        \put(65,2.2){\scriptsize$\ldots $}
        
        \put(12,-4){\scriptsize$u$}
        \put(13,-5){\circle*{0,7}}
        \qbezier(13,-5)(23,0)(23,-5)
        \qbezier(13,-5)(23,-8)(23,-5)
        \put(13.2,-5){\vector(-3,1){0}}
        \put(21.1,-5){\scriptsize$g_1$}
        
		\qbezier(13,-5)(3,0)(3,-5)
        \qbezier(13,-5)(3,-8)(3,-5)
        \put(12.7,-4.8){\vector(3,-1){0}}
        \put(3.6,-5){\scriptsize$g_2$}
        
        \qbezier(13,-5)(8,-15)(13,-15)
        \qbezier(13,-5)(18,-15)(13,-15)
        \put(12.8,-5.3){\vector(1,2){0}}
        \put(12,-14){\scriptsize$g_3$}
        
\end{picture}
\end{center}

\vspace{3 cm}
In Example~\ref{ex2} we computed the entropy of the top subgraph, which is log(2). The entropy of the subgraph generated by $\{g_1,g_2,g_3\}$ and $\{u\}$ is log(3). Then, by Proposition~\ref{prop:entropy-restriction}, the metric entropy of the graph $E$ is log(3) and by Corollary~\ref{andorinha} the topological entropy is the same as the metric entropy.

\end{example}

\subsection{The entropy of the Renewal shift}

An important example of a topologically mixing  countable Markov shift is the Renewal shift, which has infinite adjacency matrix given by \[A_\mathcal{G}=\begin{bmatrix} 
1 & 1 & 1 & 1 & \ldots \\
1 & 0 & 0 & 0 & \ldots \\
0 & 1 & 0 & 0 & \ldots \\
0 & 0 & 1 & 0 & \ldots \\
\vdots & \vdots & \vdots & \vdots & \ddots
\end{bmatrix}.\]
Dynamical properties and the (Gurevich) topological entropy of the associated Markov shift space (the usual shift space equipped with the product topology, see \cite{DDM}) are studied in \cite{MNTY, Sarig}.  In connection with C*-algebra theory and thermodynamic formalism, in \cite{BEFR}, a certain space is associated with the matrix above. This space is conjugate to the shift space (defined in \cite{GRultrapartial}) associated with the ultragraph $\mathcal G$ arising from $A_{\mathcal G}$ (see \cite{ClaramuntDaniel}), which in turn is the space we used to define the Deaconu-Renault system associated with $\mathcal G$. We recall that the ultragraph $\mathcal G$ associated with $A_\mathcal G$ has vertices $\{v_i:i\in \N\}$ and edges $\{e,f_i:i\in \N\}$, where  $s(e)=v_1$, $r(e)=\{v_i:i\in \N\}$, $s(f_i)=v_{i+1}$, and $r(f_i)=v_i$, for all $i\in \N$.

  \begin{center}
\setlength{\unitlength}{2mm}
	\begin{picture}(30,10)
		\put(1,-1.5){\scriptsize$v_1$}
		\put(2,0){\circle*{0.7}}
		\put(11,-1.5){\scriptsize$v_2$}
		\put(12,0){\circle*{0.7}}
		\put(21,-1.5){\scriptsize$v_3$}
		\put(22,0){\circle*{0.7}}
		\put(31,-1.5){\scriptsize$v_4$}
		\put(32,0){\circle*{0.7}}
		\put(41,-1.5){\scriptsize$v_5$}
		\put(42,0){\circle*{0.7}}
    	\put(45,0){\scriptsize$\ldots $}
			
		\qbezier(2,0)(-7,11)(2,10)    
		\put(2,0){\vector(-1,-2){0}}
		\qbezier(2,10)(7,8)(2,0)
		\put(12,0){\vector(3,-4){0}}	
		\qbezier(2,10)(4,10)(12,0)
		\put(22,0){\vector(2,-1){0}}	
		\qbezier(2,10)(4,10)(22,0)
		\put(32,0){\vector(3,-1){0}}
		\qbezier(2,10)(4,10)(32,0)
    	\put(42,0){\vector(4,-1){0}}
		\qbezier(2,10)(4,10)(42,0)
		\put(30,5){\scriptsize$\ldots $}
		\put(-2,10){\scriptsize$e$}
		
		\put(12,0){\vector(-1,0){10}}	
		\put(22,0){\vector(-1,0){10}}	
		\put(32,0){\vector(-1,0){10}}	
		\put(42,0){\vector(-1,0){10}}	
	
		\put(6,-1.2){\scriptsize$f_1$}
		\put(16,-1.2){\scriptsize$f_2$}
		\put(26,-1.2){\scriptsize$f_3$}
		\put(36,-1.2){\scriptsize$f_4$}

			\put(6,-4.2){\normalsize The Renewal shift ultragraph}

\end{picture}
\end{center}
\vspace{1cm}
Next, we compute the topological entropy of the Deaconu-Renault system associated with $\mathcal G$. For this, we construct a sequence of covers with diameter tending to zero (so we can use Proposition~\ref{diametro indo pra zero}). To compute the diameter of a cover we need to fix a metric, which is done once we choose an enumeration of the set of ultrapaths $\p$ or an enumeration of the set $S$ described in Proposition~\ref{mais metricas equivalentes}.
So, choose the enumeration fixed in Remark~\ref{enumeracao tabajara}.

Let $d$ be the metric induced by this enumeration and let $(X,d)$ be the Deaconu-Renault system associated with $\mathcal G$. It follows from Remark~\ref{ordem} and \cite[Proposition 3.12]{GRultrapartial} that $(X,d)$ is compact. Next, we define a cover of the space $X$.

Fix an $m\in \N$ and let $F=\{e,f_1,f_2,\ldots ,f_m\}$. Define the sets $$Q=\{\beta: \beta \text{ is a finite path in }\mathcal{G} \text{ with edges in } F, \text{ whose last edge is }  e \text{ and }|\beta|<m\}$$
and $$R=\{\beta: \beta \text{ is a finite path in }\mathcal{G} \text{ with edges in }F \text{ and }|\beta|=m\}.$$

Define an open cover of $X$ by $$\alpha^m=\left\{D_{(r(e),r(e)),F}\right\}\bigcup\{D_{\beta,F}:\beta\in Q\}\bigcup\{D_\gamma:\gamma \in R\}.$$

 To simplify notation, in what follows we denote $\alpha^m$ by $\alpha$. Notice that $\alpha$ is a finite cover and that all the cylinder sets of this cover are pairwise disjoint sets. 

Next, we compute $h(\alpha, \sigma, X)$, that is, the entropy of $\sigma$ relative to the open cover $\alpha$ (see Definition~\ref{defofentropyviacovers}). 
For this, we first observe that the inverse image by $\sigma$ of each cylinder set of $\alpha$ is a union of two cylinder sets.

To see that this holds for $D_{(r(e), r(e)),F}$, write
$$D_{(r(e), r(e)),F}=\bigcup\limits_{i=m+1}^\infty D_{f_i}.$$ Then, $$\sigma^{-1}(D_{(r(e),r(e)),F})=\sigma^{-1}\left(\bigcup\limits_{i=m+1}^\infty D_{f_i}\right)=\left(\bigcup\limits_{i=m+2}^\infty D_{f_i}\right)\cup\left(\bigcup\limits_{i=m+1}^\infty D_{ef_i}\right)=D_{(r(e),r(e)),F\cup\{f_{m+1}\}}\cup D_{e,F}.$$
To check the observation for cylinders of the form $D_{\beta,F}$ notice that, for each vertex $v$, there are exactly two edges containing $v$ in its range, one of them being $e$. Then, for a given $\beta \in Q$, there are exactly two edges containing $s(\beta)$ in its range, the edge $e$ and one more edge, which we call $a_\beta$. Therefore, $\sigma^{-1}(D_{\beta,F})$ is a union of two cylinder sets, that is, $$\sigma^{-1}(D_{\beta,F})=D_{e\beta,F}\cup D_{a_\beta\beta,F}.$$ Similarly, for each $\gamma \in R$, $$\sigma^{-1}(D_\gamma)=D_{e\gamma}\cup D_{a_\gamma\gamma}.$$ Therefore, 
$$\sigma^{-1}(\alpha)=\left\{D_{(r(e),r(e)),F\cup \{f_{m+1}\}}\cup D_{e,F}\right\}\cup \{D_{e\beta,F}\cup D_{a_\beta\beta,F}:\beta \in Q\}\cup \{D_{e\gamma} \cup D_{a_\gamma\gamma}:\gamma \in R\} .$$

The reader can now verify that $\alpha\vee \sigma^{-1}(\alpha)$ is the set of all the cylinder sets appearing as the elements of $\sigma^{-1}(\alpha)$, or, more specifically, $ \alpha_1=\alpha\vee \sigma^{-1}(\alpha)$ is equal to $$\left\{D_{(r(e),r(e)),F\cup \{f_{m+1}\}}\right\}\cup \left\{ D_{e,F} \right\}\cup \left\{D_{e\beta,F}:\beta \in Q\right\}\cup \left\{D_{a_\beta \beta,F}\right\}\cup \left\{D_{e\gamma}:\gamma \in R\right\}\cup \left\{D_{a_\gamma\gamma}:\gamma \in R\right\}.$$ 
Notice that the above collection has $2M$ cylinder sets, where $M$ is the cardinality of $\alpha$. Since all these cylinder sets are pairwise disjoint, we obtain that $$N(\alpha_1, X_1)=2M.$$

Proceeding similarly to what is done above, we conclude that the inverse image by $\sigma$ of each cylinder set appearing in $\sigma^{-1}(\alpha)$ is a union of two cylinder sets. Moreover, the cover $\alpha_2=\alpha\vee \sigma^{-1}(\alpha)\vee \sigma^{-2}(\alpha)$ of $X_2=X\cap \sigma^{-1}(X)\cap \sigma^{-2}(X)$ is the collection of all the cylinder sets appearing as the elements of  $\sigma^{-1}(\sigma^{-1}(\alpha))$. So,  $N(\alpha_2,X_2)=2^2M$. 

Proceeding inductively, we obtain that $N(\alpha_n,X_n)=2^nM$ for each $n\in \N$. Then, $$h(\alpha, \sigma, X)=\lim\limits_{n\rightarrow \infty}\frac{1}{n}H(\alpha_n,X_n)=\lim\limits_{n\rightarrow \infty}\frac{1}{n}log(2^nM)=log(2).$$

To finish, we go back to the original notation for the cover $\alpha$, that is, $\alpha^m$ (since this cover depends on $m$). It follows from the chosen enumeration of $S$, and from the definition of $d$, that $\lim\limits_{m\rightarrow \infty}diam(\alpha^m)=0$. Then, from Proposition~\ref{diametro indo pra zero}, we obtain that $h(\sigma,X)=\lim\limits_{m\rightarrow\infty}h(\alpha^m,\sigma, X)$ and therefore $h(\sigma, X)=log(2)$. Hence, from Lemma \ref{entropia crescente} and Definition \ref{defofentropyviacovers} we get that $h(\sigma)=log(2)$.

\newpage 
(Daniel Gon\c{c}alves, Danilo Royer and Felipe Augusto Tasca) 
 { Departamento de Matem\'atica, UNIVERSIDADE FEDERAL DE SANTA CATARINA, 88040-970, Florian\'opolis SC, Brazil.}
 
{\textit{Email Address:}}\texttt{
\href{mailto:daemig@gmail.com}{daemig@gmail.com}, \href{mailto:danilo.royer@ufsc.br}{danilo.royer@ufsc.br}, \href{mailto:tasca.felipe@gmail.com}{tasca.felipe@gmail.com}.}

\end{document}